\documentclass[11pt, a4paper]{article}

  % Adjusting margins
   \setlength{\bigskipamount}{5ex plus1.5ex minus 2ex}
   \setlength{\textheight}{24.2cm} 
   \setlength{\textwidth}{15cm}
   \setlength{\hoffset}{-1.2cm}
   \setlength{\voffset}{-2.3cm}

\usepackage{graphicx}%
\usepackage{multirow}%
\usepackage{amsmath,amssymb,amsfonts}%
\usepackage{amsthm}%
\usepackage{mathrsfs}%
\usepackage[title]{appendix}%
\usepackage{xcolor}%
\usepackage{textcomp}%
\usepackage{manyfoot}%
\usepackage{booktabs}%
\usepackage{algorithm}%
\usepackage{algorithmicx}%
\usepackage{algpseudocode}%
\usepackage{listings}%
\usepackage{mathtools,float}
\usepackage{enumerate,enumitem}
\usepackage{hyperref}
\usepackage{pdfsync}

\newcommand{\setu}{{\mathrm{\mathfrak{u}}}}
\newcommand{\setv}{{\mathrm{\mathfrak{v}}}}

\newcommand{\supp}{\mathrm{supp}}
\newcommand{\diam}{\mathrm{diam}}

\newcommand{\tri}{\bigtriangleup}

\newcommand{\bigO}{\mathcal{O}}
\newcommand{\eps}{\varepsilon}

\newcommand{\satop}[2]{\stackrel{\scriptstyle{#1}}{\scriptstyle{#2}}}

% Number fields
\newcommand{\N}[0]{\mathbb{N}}

\newcommand{\R}[0]{\mathbb{R}}

% bold
\newcommand{\bszero}{{\boldsymbol{0}}}

\newcommand{\bsa}{{\boldsymbol{a}}}
\newcommand{\bsb}{{\boldsymbol{b}}}

\newcommand{\bse}{{\boldsymbol{e}}}

\newcommand{\bsg}{{\boldsymbol{g}}}

\newcommand{\bsk}{{\boldsymbol{k}}}
\newcommand{\bsl}{{\boldsymbol{\ell}}}
\newcommand{\bsm}{{\boldsymbol{m}}}

\newcommand{\bst}{{\boldsymbol{t}}}

\newcommand{\bsv}{{\boldsymbol{v}}}

\newcommand{\bsx}{{\boldsymbol{x}}}
\newcommand{\bsy}{{\boldsymbol{y}}}
\newcommand{\bsz}{{\boldsymbol{z}}}

\newcommand{\bsA}{{\boldsymbol{A}}}

\newcommand{\bsU}{{\boldsymbol{U}}}

\newcommand{\bsgamma}{{\boldsymbol{\gamma}}}

\newcommand{\bskappa}{{\boldsymbol{\kappa}}}

\newcommand{\bsnu}{{\boldsymbol{\nu}}}

% Calligraphic
\newcommand{\calA}{\mathcal{A}}

\newcommand{\calD}{\mathcal{D}}
\newcommand{\calF}{\mathcal{F}}
\newcommand{\calG}{\mathcal{G}}
\newcommand{\calH}{\mathcal{H}}

\newcommand{\calK}{\mathcal{K}}
\newcommand{\calL}{\mathcal{L}}

\newcommand{\calO}{\mathcal{O}}

\newcommand{\calT}{\mathcal{T}}

% Roman
\newcommand{\rd}{\mathrm{d}}

\newcommand{\ML}{\mathrm{ML}}

\newcommand{\bbA}{\mathbb{A}}
\newcommand{\bbB}{\mathbb{B}}
\newcommand{\bbI}{\mathbb{I}}

%sf
\newcommand{\sfP}{\mathsf{P}}
\newcommand{\sfI}{\mathsf{I}}

% bars

% hats

% min and max constants

% 

%%

%% Environments
\newtheorem{theorem}{Theorem}
\newtheorem{lemma}[theorem]{Lemma}

\newtheorem{remark}[theorem]{Remark}

\newcommand{\ourparagraph}[1]{\paragraph*{#1}}

\newenvironment{proofof}[1]{\begin{trivlist}
    \item[\hskip\labelsep{\bf Proof of {#1}.}]}{$\hfill\Box$\\\end{trivlist}}

%equation numbering style
\numberwithin{equation}{section}

\makeatletter
\let\@fnsymbol\@arabic
\makeatother

\title{Multilevel lattice-based kernel approximation for elliptic PDEs with random coefficients}
\date{\today}
\author{Alexander D. Gilbert\footnotemark[1] \and
	    Michael B. Giles\footnotemark[2] \and
        Frances Y. Kuo\footnotemark[1] \and
        Ian H. Sloan\footnotemark[1] \and
	    Abirami Srikumar\footnotemark[1]
	     }

\begin{document}
\maketitle

\footnotetext[1]{School of Mathematics and Statistics, UNSW Sydney, Sydney NSW 2052, Australia.\\
                           \texttt{alexander.gilbert@unsw.edu.au},\;
                           \texttt{f.kuo@unsw.edu.au},\\
                           \texttt{i.sloan@unsw.edu.au},\;
                           \texttt{a.srikumar@unsw.edu.au}
                           }

\footnotetext[2]{Mathematical Institute, University of Oxford, Oxford, OX2 6GG, UK.\\
                           \texttt{mike.giles@maths.ox.ac.uk}
                           }     
\begin{abstract}
This paper introduces a multilevel kernel-based approximation method to estimate efficiently solutions to elliptic partial differential equations (PDEs) with periodic random coefficients. Building upon the work of Kaarnioja, Kazashi, Kuo, Nobile, Sloan (Numer.\ Math., 2022) on kernel interpolation with quasi-Monte Carlo (QMC) lattice point sets, we leverage multilevel techniques to enhance computational efficiency while maintaining a given level of accuracy. In the function space setting with product-type weight parameters, the single-level approximation can achieve an accuracy of $\varepsilon>0$ with cost $\calO(\varepsilon^{-\eta-\nu-\theta})$ for positive constants $\eta, \nu, \theta $ depending on the rates of convergence associated with dimension truncation, kernel approximation, and finite element approximation, respectively. Our multilevel approximation can achieve the same $\varepsilon$ accuracy at a reduced cost $\calO(\varepsilon^{-\eta-\max(\nu,\theta)})$.
Full regularity theory and error analysis are provided, followed by numerical experiments that validate 
the efficacy of the proposed multilevel approximation in comparison to the single-level approach.
\end{abstract}

\section{Introduction}\label{sec:intro}

Kernel-based interpolation using a quasi-Monte Carlo (QMC) lattice design was first 
introduced in \cite{ZLH06}, where the authors analysed splines constructed 
from reproducing kernel functions on a lattice point set for periodic function approximation. 
They found that the special structure of a lattice point set coupled with periodic kernel functions 
led to linear systems with a circulant matrix that were able to be solved efficiently via the fast Fourier transform (FFT). The recent paper \cite{KKKNS22} applies kernel interpolation for approximating the solutions to partial differential equations (PDEs) with periodic random coefficients over the parametric domain. In this paper, we seek to enhance the kernel-based approximation method for estimating PDEs over the parametric domain by leveraging multilevel methods~\cite{Giles15}.

We are interested in the following parametric elliptic PDE
\begin{align}
\label{eq:pde}
-\nabla\cdot(\Psi(\bsx, \bsy) \nabla u(\bsx, \bsy)) \,&=\, f(\bsx),\, & \bsx \in D,
\\\nonumber
u(\bsx, \bsy) \,&=\, 0, & \bsx \in \partial D,
\end{align}
where the \emph{physical variable} $\bsx$ belongs to a bounded convex domain 
$D \subset \R^d$, for $d = 1, 2, $ or~$3$, and 
\[
 \bsy \in \Omega \coloneqq [0, 1]^\N
\] %
is a countable vector of parameters. 
It is assumed that the input field $\Psi(\cdot, \bsy)$ in~\eqref{eq:pde} is represented by the 
periodic model introduced recently in~\cite{KKS20}, which is
periodic in $\bsy$ and given by the series expansion
\begin{equation}
\label{eq:coeff}
\Psi(\bsx, \bsy) \,\coloneqq\, \psi_0(\bsx) + \sum_{j = 1}^\infty \sin(2\pi y_j)\, \psi_j(\bsx).
\end{equation}
Here $\bsy \coloneqq (y_1, y_2, \ldots)$, where each $y_j$ is
independent and uniformly distributed on $[0, 1]$. The functions $\psi_j \in L^\infty(D)$ 
are known and deterministic such that $\Psi(\cdot, \bsy) \in  L^\infty(D)$ for all 
$\bsy \in \Omega$. Additional requirements on $\psi_j$ will be introduced later as necessary.

The goal is to approximate efficiently the solution $u(\bsx, \bsy)$ in both $\bsx \in D$ and $\bsy \in \Omega$ simultaneously. We will follow a similar method to \cite{KKKNS22}, where the approximation is based on discretising the spatial domain $D$ using \emph{finite elements} and applying \emph{kernel approximation} over the parametric domain $\Omega$ based on a lattice point set. This paper introduces a new \emph{multilevel} approximation that spreads the work over a hierarchy of finite element meshes and kernel interpolants so that the overall cost is reduced.

The solution to \eqref{eq:pde} with the coefficient given by \eqref{eq:coeff} lies in a periodic function space (with respect to the parametric domain) equipped with a reproducing kernel $\calK(\cdot,\cdot)$. Given evaluations $u(\cdot, \bst_k)$ on a lattice point set $\{\bst_k\}_{k=0}^{N-1} \subset \Omega$, the solution $u = u(\cdot,\bsy)$ can be approximated over~$\Omega$ using a kernel interpolant given by 
\begin{align} \label{eq:kernel_interp} 
		I_N u(\cdot,\bsy) \coloneq \sum_{k=0}^{N-1} a_k\, \calK(\bst_k,\bsy) \qquad \mbox{for } \bsy\in\Omega,
	\end{align}
where $I_N$ is the kernel interpolation operator on $\Omega$. 
The coefficients $a_k=a_k (\bsx)$ for $k=0,\ldots,N-1$ are obtained by solving the linear system associated with interpolation at the lattice points via FFT. 
Combining \eqref{eq:kernel_interp} with a finite element discretisation
leads to an approximation of $u$ over $D \times \Omega$.
This is the method employed by \cite{KKKNS22}, which we will refer to as
the \emph{single-level kernel interpolant} (see also Section~\ref{sec:SLKI} for details).

To introduce the \emph{multilevel kernel approximation}, consider now a sequence of kernel interpolants $I_{\ell} \coloneq I_{N_\ell}$, for $\ell = 0, 1, \ldots$, based on a sequence of embedded lattice point sets with nonincreasing size $N_0 \ge N_1 \ge \cdots$ and a sequence of nested finite element approximations $u_\ell(\cdot, \bsy)$ for $\ell = 0, 1, \ldots $, where $u_\ell$ becomes increasingly accurate (and thus has increasing cost) as $\ell$ increases.
Omitting the $\bsx$ and $\bsy$ dependence, the multilevel kernel approximation with maximum level $L \in \N$ is given by
\begin{equation}
\label{eq:ml-ker}
I^{\rm{ML}}_Lu \,\coloneqq\, I_0 u_0+ 
\sum_{\ell = 1}^L I_\ell (u_\ell - u_{\ell - 1}).
\end{equation}
The motivation of such an algorithm is to reduce the overall computational cost compared to the single-level method while achieving the same level of accuracy. The cost savings come from interpolating the difference $u_\ell-u_{\ell-1}$, which converges to 0 as $\ell$ increases, thus requiring an interpolation approximation with fewer points to achieve a comparable level of accuracy.
Indeed, as will be demonstrated later, to achieve an accuracy of $\calO(\varepsilon)$ with the single-level approximation in the function space setting with product-type weight parameters, the computational cost is $\calO(\varepsilon^{-\eta-\nu-\theta})$ for positive constants $\eta,\nu,\theta$ depending on the rates of convergence for dimension truncation, kernel approximation, and finite element approximation, respectively; whereas for the multilevel approximation, the cost is reduced to
$\calO(\varepsilon^{-\eta-\max(\nu,\theta)})$. The main contribution of this work is to introduce the multilevel kernel approximation along with a full error and cost analysis.

Parametric PDEs of the form \eqref{eq:pde} can be used to 
model steady-state flow through porous media and have been thoroughly studied in uncertainty quantification literature (see e.g., \cite{CST13,CGSS2000, CGST11,CDS10,TSGU13}). 
QMC methods have achieved much success in tackling parametric PDE problems, including evaluating expected values of quantities of interest
(see, e.g., \cite{GilbScheichl,GKS25,GrKNSSS15,KSS12,KSS15}), as well as density estimation (see \cite{GKS25}).
The two most common forms of random coefficient
are the uniform model
(see e.g., \cite{CDS10,KSS12,KSS15}) and lognormal model (see e.g., \cite{CST13,CGSS2000,TSGU13}), and as an alternative, the periodic model \eqref{eq:coeff} was introduced in \cite{KKS20} to exploit fast Fourier methods.

Multivariate function approximation is another area where
QMC methods have recently been successful. 
One example is trigonometric approximation in periodic spaces, where lattice rules are used to evaluate the coefficients in a finite (or truncated) Fourier expansion (see e.g., \cite{BKUV17,Kam13,Kam18,KPV15, KMNN19,KSW06,KSW08}). 
Fast component-by-component algorithms for constructing good lattice rules for trigonometric approximation in periodic spaces 
were presented and analysed in \cite{CKNS20, CKNS21}.

There has also been research on kernel methods for approximation such as radial basis approximation for interpolating scattered data, signal processing, and meshless methods for solving PDEs  (see e.g., \cite{Dyn89,Hardy71, NWW03,SW06,Wendland05,WS95}). 
Approximation in $L^2$ using kernel interpolation on a lattice
in a periodic setting was first analysed in \cite{ZLH06},
which also highlighted the advantage that
linear systems involved can be solved efficiently via FFT; see
also \cite{ZKH09} for the $L^\infty$ case. 
The paper \cite{KKKNS22} introduced single level kernel interpolation with lattice points for parametric PDEs and
also provided a full analysis of the $L^2$ error on $D \times \Omega$.

Multilevel methods were first introduced in \cite{Heinrich01} for parametric integration and then developed further in \cite{Giles07,Giles08} for pricing financial options by computing the expected value of a pay-off depending on a stochastic differential equation utilising paths simulated via Monte Carlo (MC). These papers showed that the overall computational cost of the multilevel estimator was lower than that of the direct estimate at the same level of error. Multilevel MC methods were extended to multilevel QMC in~\cite{GilesWaterhouse09}. Subsequently, multilevel methods with MC and QMC have been successfully used in several papers to compute expectations of quantities of interest from parametric PDEs (see e.g., \cite{CST13,CGST11,GilbScheichl,KSS15}). 
The paper \cite{TJWG15} employed a similar multilevel
strategy to approximate the PDE solution on $D \times \Omega$, where
instead of using kernel interpolation on the parameter domain (as in this paper)
they used sparse grid stochastic collocation.

To the best of our knowledge, this paper is the first to use a multilevel approach with a QMC method to approximate the solution of a PDE \emph{as a function over both the spatial and parametric domains}. 

The structure of the paper is as follows. Section~\ref{sec:background} summarises the problem setting and essential background on dimension truncation, finite element methods and kernel interpolation. Section~\ref{sec:MLKI} introduces the multilevel kernel approximation alongside a breakdown of the error and the corresponding multilevel cost analysis. Section~\ref{sec:stoch_reg} includes the regularity analysis required for the error analysis presented in Section~\ref{sec:error}. Practical details on implementing our multilevel approximation are covered in Section~\ref{sec:implementation}. Finally, Section~\ref{sec:numerical} presents results of the numerical experiments. Any technical proofs not detailed in the main text are provided in the appendix.

\section{Background}\label{sec:background}

\subsection{Notation}
Let $\bsnu=(\nu_j)_{j\geq 1}$ with $\nu_j\in\N_0$ be a multi-index, and let 
$|\bsnu| \coloneqq \sum_{j\geq 1} \nu_j$ and $\supp(\bsnu) \coloneq \{j\geq 1:\nu_j\neq 0\}$. Define
$\calF$ to be the set of multi-indices with finite support, i.e.,
$\calF \coloneqq \{\bsnu\in\N_0^\infty\, :\, |\supp(\bsnu)| <\infty\}.$
Multi-index notation is used for the parametric derivatives, i.e.,  
the mixed partial derivative of order $\bsnu \in \calF$ with respect to $\bsy$ is denoted by
$\partial^{\bsnu} = \prod_{j\ge 1} (\partial/\partial y_j)^{\nu_j}$.

For $\bsnu, \bsm \in \calF$, we define
$\binom{\bsnu}{\bsm} \coloneqq \prod_{j\geq 1} \binom{\nu_j}{m_j}$
and interpret $\bsm\leq \bsnu$ to be $m_j \leq \nu_j$ for all $j\geq 1$ (i.e., comparison between indices is done componentwise). For a given sequence $\bsb=(b_j)_{j\geq1}$, we define
$\bsb^{\bsnu} \coloneqq \prod_{j\geq 1}b_j^{\nu_j}$.

The Stirling numbers of the second kind are given by
	\begin{align} \label{eq:stirling_def}
		S(0,0)\coloneq 1, \quad S(n,k) \coloneq \frac{1}{k!}\sum_{j=0}^k (-1)^{k-j}{k\choose j} j^n \quad  \text{for } 0 \leq k \leq n.
	\end{align}		
	
The notation $A\lesssim B$ means that there exists some $c>0$ such that $A\leq cB$, and $A \simeq B$ means that there exists constants $c_1,c_2>0$ such that $A \leq c_1B$ and $B \leq c_2 A$.

For $g\in L^2(D) \times  L^2(\Omega) = L^2(D\times\Omega)$,
we define the $L^2$-norm on $D \times \Omega$ by
	\begin{align*}
	\|g\|_{L^2(D\times\Omega)}
= \bigg(\int_\Omega\int_D |g(\bsx,\bsy)|^2\,\rd\bsx\,\rd\bsy\bigg)^{1/2}	
	= \bigg(\int_\Omega \|g(\cdot,\bsy)\|_{L^2(D)}^2\,\rd\bsy\bigg)^{1/2}.
	\end{align*}
Furthermore, note that all functions in this paper are measurable, so by Fubini's Theorem we can swap the order of the integrals  to give $\|\cdot\|_{L^2(D\times\Omega)}\equiv \|\cdot\|_{L^2(\Omega\times D)}$.

\subsection{Parametric variational formulation}
Let $V\coloneq H_0^1(D)$ denote the usual first order Sobolev space of functions on $\bsx\in D$  that vanish on the boundary, with associated norm $\|v\|_V \coloneq \|\nabla v\|_{L^2(D)}.$  
Multiplying both sides of \eqref{eq:pde} by a test function $v\in V$ and then integrating with respect to $\bsx$, using the divergence theorem, yields the variational equation:
find $u(\cdot, \bsy) \in V$ such that 
	\begin{align}  \label{eq:variational_pde}
		\calA(\bsy;u(\cdot,\bsy),v) = \langle f,v\rangle \quad \text{for all } v\in V,
	\end{align}
where  $f \in V'$, with $V' \coloneq H^{-1}(D)$ denoting the dual space of $V$, and
$\calA(\bsy;\cdot,\cdot): V \times V \to \R$ is the parametric bilinear form defined by
	\begin{align*}
	\calA(\bsy;u,v) \coloneqq \int_D \Psi(\bsx,\bsy)\nabla u(\bsx)\cdot\nabla v(\bsx) \,\rd\bsx \quad \text{for } u,v\in V.
	\end{align*}
The $L^2(D)$ inner product $\langle\,\cdot,\cdot\,\rangle$ is extended continuously to the duality pairing on $V \times V'$.

The variational problem \eqref{eq:variational_pde} is subject to the same assumptions as in \cite{KKS20}:
\begin{enumerate}[label={(A\arabic*)},ref=A\theenumi]
	\item \label{asm:a_in_Linf} $\psi_0\in L^\infty(D)$ and $\sum_{j\geq 1} \|\psi_j\|_{L^\infty(D)}<\infty$,
	\item \label{asm:a_bounds} there are positive constants $\Psi_{\min}$ and $\Psi_{\max}$ such that 
	$0< \Psi_{\min}\leq \Psi(\bsx,\bsy)\leq \Psi_{\max} <\infty$ for all $\bsx\in D$ and $\bsy\in \Omega$,
	\item \label{asm:p_summ} $\sum_{j\geq 1} \|\psi_j\|^p_{L^\infty(D)}<\infty$ for some $0<p< 1$,
	\item \label{asm:a_W1inf} $\psi_0\in W^{1,\infty}(D)$ and $\sum_{j\geq 1} \|\psi_j\|_{W^{1,\infty}(D)}<\infty$,
	\item \label{asm:psi_decrease} $\|\psi_1\|_{L^\infty(D)} \geq \|\psi_2\|_{L^\infty(D)} \geq \cdots$, and
	\item \label{asm:poly_dom} the physical domain $D\subset \R^d$, where $d=1,2$ or $3$, is a convex and bounded polyhedron with plane faces.
\end{enumerate}

Here  $W^{1,\infty}(D)$ is the Sobolev space with essentially bounded first order weak derivatives, equipped with the norm $\|v\|_{W^{1,\infty}(D)}\coloneq \max\{\|v\|_{L^\infty(D)},\|\nabla v\|_{L^\infty(D)}\}$. For the new multilevel analysis, we will also require additional assumptions:
	\begin{enumerate}[label={(A\arabic*)},ref=A\theenumi]
	\setcounter{enumi}{6}
	\item \label{asm:bbar_sum} $\sum_{j\geq 1} \|\psi_j\|_{W^{1,\infty}(D)}^q<\infty$ for some $0<p <q\leq  1$, and
	\item \label{asm:f}  $f \in L^2(D)$.
	\end{enumerate}
We additionally define
	\begin{align} \label{eq:bb_bar}
	 b_j \coloneqq \frac{\|\psi_j\|_{L^\infty(D)}}{\Psi_{\min}}
	 \quad \text{and} \quad 
	 \overline{b}_j \coloneqq \frac{\|\psi_j\|_{W^{1,\infty}(D)}}{\Psi_{\min}}.
	\end{align}

From the Lax-Milgram Lemma, \eqref{eq:variational_pde} is uniquely solvable for all $\bsy\in\Omega$ and this solution will satisfy the \emph{a priori} bound
\begin{align}\label{eq:lax-milgram}
	\|u(\cdot,\bsy)\|_V \leq \frac{\|f\|_{V'}}{\Psi_{\min}}.
	\end{align}
In addition, from \cite[Theorem~2.3]{KKS20} the parametric derivatives are bounded by 
\begin{align}\label{eq:mixed_stoch_der}
	\|\partial^{\bsnu}u(\cdot,\bsy)\|_{V} \leq \frac{\|f\|_{V'}}{\Psi_{\min}}(2\pi)^{|\bsnu|} \sum_{\bsm\leq\bsnu} |\bsm|!\,\bsb^\bsm \prod_{i\geq1}S(\nu_i,m_i)
	\quad \text{for } \bsnu \in \calF.
	\end{align}

\subsection{Dimension truncation}\label{sec:prelim_DT}
To approximate $u(\cdot, \bsy)$ in $\bsy$, the infinite-dimensional
parameter domain $\Omega$
must be first truncated to a finite number of dimensions $s$. 
This is done by setting $y_j = 0$ for $j > s$, where we define the truncated parameter $\bsy_{1:s} \coloneq (y_1,y_2,\ldots,y_s,0,\ldots)$, or equivalently by truncating the
coefficient expansion \eqref{eq:coeff} to $s$ terms. 
The dimension-truncated solution is denoted by 
$u^s(\cdot, \bsy) \coloneq u(\cdot, \bsy_{1:s})$ and it is obtained by 
solving the variational problem \eqref{eq:variational_pde} at $\bsy = \bsy_{1:s}$.
With a slight abuse of notation we treat $\bsy_{1:s}$ as a vector in 
the $s$-dimensional parameter domain $\Omega_s \coloneqq [0, 1]^s$.

The dimension-truncated problem is subject to all the same assumptions as the variational problem \eqref{eq:variational_pde} (i.e., Assumptions \eqref{asm:a_in_Linf}--\eqref{asm:f} hold), hence, the \emph{a priori} bound \eqref{eq:lax-milgram} and regularity bound \eqref{eq:mixed_stoch_der} also hold here. 
Additionally, 
we have from \cite[Theorem~4.1]{KKKNS22} that
		\begin{align}\label{eq:DT_KKKNS}
		\|u- u^{s}\|_{L^2(D\times\Omega)}
		\,\lesssim\, \|f\|_{V'} \,s^{-(\frac{1}{p}-\frac{1}{2})},
		\end{align}
where the implied constant is independent of $s$ and $f$.

\subsection{Finite element methods}

The solution to the PDE \eqref{eq:pde} 
will be approximated by discretising in space using the  finite element (FE) method.
We consider piecewise linear FE methods, however, the multilevel method
can be applied using more general discretisations.
Denote by $V_h\subset V$ the space of continuous piecewise linear functions on a shape-regular triangulation of $D$ with mesh width $h>0$ and $M \coloneq \dim (V_h) = \calO(h^{-d})$.
For $\bsy \in \Omega$, the FE approximation of $u(\cdot, \bsy)$ from \eqref{eq:variational_pde} is obtained by finding $u_h(\cdot, \bsy) \in V_h$ such that
\begin{equation}
\label{eq:fe}
\calA(\bsy, u_h(\cdot, \bsy), v_h) \,=\, \langle f, v_h \rangle
\quad \text{for all } v_h \in V_h.
\end{equation}

The FE basis functions for the space $V_h$ are denoted by $\phi_{h, i}$, $i = 1, 2, \ldots, M$, and the FE approximation with coefficients given by $[u_{h, i}(\bsy)]_{i = 1}^{M}$ can be represented as 
\begin{equation}
\label{eq:fe-expansion}
u_h(\bsx, \bsy) \,=\, \sum_{i = 1}^{M} u_{h, i}(\bsy) \,\phi_{h, i}(\bsx)\,.
\end{equation}
Since $V_h\subset V$, the \emph{a priori} bound \eqref{eq:lax-milgram} and the regularity bound \eqref{eq:mixed_stoch_der} also hold for the FE approximation $u_h$, as well as for FE approximation of the dimension-truncated solution, denoted $u_h^s$.

From \cite[Theorem~4.3]{KKKNS22}, we have that under Assumptions \eqref{asm:a_in_Linf}, \eqref{asm:a_bounds}, \eqref{asm:a_W1inf}, \eqref{asm:poly_dom}, and \eqref{asm:f}, the error of FE approximation satisfies
	\begin{align}\label{eq:FE_KKKNS}
	\|u(\cdot,\bsy)-u_h(\cdot,\bsy)\|_{L^2(D)} \,\lesssim\, h^{2}\,\|f\|_{L^2(D)}  \qquad \mbox{as}\,\,\, h\to 0,
	\end{align}
where the implied constant is independent of $h$,$f$ and $\bsy$. 
By Galerkin orthogonality, the FE error $u(\cdot, \bsy)-u_h(\cdot, \bsy)$ is orthogonal to $V_h$, i.e.,
	\begin{align}\label{eq:gal_orth}
	\calA(\bsy;\,u(\cdot,\bsy) - u_h(\cdot,\bsy),\,v_h) = 0 \quad \text{for all } v_h\in V_h. 
	\end{align}
We also define $\sfI: V \to V$ to be the identity operator and $\sfP^h_{\bsy} :V\to V_h$ to be the parametric FE projection operator onto $V_h$, which is defined for some $w\in V$ by 
	\begin{align}\label{eq:orth_proj}
	\calA(\bsy;\,(\sfI - \sfP^h_\bsy) w,\, v_h) = 0 \quad \text{for all } v_h\in V_h.
	\end{align}
The approximation $u_h(\cdot,\bsy)$ is the projection of $u$ onto $V_h$, i.e., $u_h = \sfP^h_\bsy u \in V_h$, and by the definition of a projection $(\sfP^h_\bsy)^2 = \sfP^h_\bsy$ on $V_h$. 

Under Assumptions \eqref{asm:a_bounds}, \eqref{asm:a_W1inf} and \eqref{asm:poly_dom}, we have the following result from \cite[Theorem~3.2.2]{Ciarlet} which holds for all $w~\in~H^2(D)\cap V$,
\begin{align}\label{eq:d_fe_error} 
\|(\sfI-\sfP^h_\bsy)w\|_{V} \lesssim h\,\|\Delta w\|_{L^2(D)}\quad  \text{as } h\to 0,
\end{align}
where the implied constant is independent of $h$ and $\bsy$.

\subsection{Lattice-based kernel interpolation}\label{sec:kernel_int}

The solution $u(\cdot, \bsy)$ will be approximated via kernel interpolation in the dimension-truncated parametric domain $\Omega_s$. Consider the weighted Korobov space $ \calH_{ \alpha, \bsgamma}(\Omega_s)$, which is the Hilbert space of one-periodic $L^2$ functions defined on $\Omega_s$ with absolutely convergent Fourier series and square-integrable mixed derivatives of order $\alpha$. 
We restrict $\alpha \geq 1$ to be an integer smoothness parameter\footnote{In general, $\alpha$ need not be an integer (e.g., see \cite{KKKNS22,SW2001}), however, to have a simple, closed-form representation of the reproducing kernel and norm we restrict ourselves to integer $\alpha$.} and include weight parameters $\bsgamma = \{\gamma_\setu > 0 : \setu \subseteq \{1:s\}\}$ that model the relative importance of different groups of parametric variables. The space $ \calH_{ \alpha, \bsgamma}(\Omega_s)$ is a reproducing kernel Hilbert space, equipped with the norm
	\begin{align}\label{eq:H_norm}
	\|g\|_{\calH_{\alpha, \bsgamma}(\Omega_s)}^2 \coloneq \sum_{\setu \subseteq \{1:s\}}\frac{1}{(2\pi)^{2\alpha|\setu|}\gamma_\setu}\int_{[0,1]^{|\setu|}} \bigg|\int_{[0,1]^{s-|\setu|}} 
	\!\bigg(\prod_{j\in\setu}\frac{\partial^\alpha}{\partial y_j ^\alpha}\bigg)g(\bsy)\rd\bsy_{-\setu}\bigg|^2\,\rd\bsy_{\setu},
	\end{align}
where $\bsy_{\setu} \coloneq (y_j)_{j\in\setu}$	and $\bsy_{-\setu} \coloneq (y_j)_{j\in\{1:s\}\backslash\setu}$.
The reproducing kernel for this space, $\calK_{\alpha, \bsgamma} : \Omega_s \times \Omega_s \to \R$, is given by 
\begin{equation*}
%\label{eq:ker}
\calK_{\alpha, \bsgamma}(\bsy, \bsy') \,\coloneqq\, 
\sum_{\setu\subseteq\{1:s\}}\gamma_\setu \prod_{j\in\setu}\bigg[(-1)^{\alpha+1}\frac{(2\pi)^{2\alpha}}{(2\alpha)!}B_{2\alpha}(|y_j-y'_j|)\bigg],
\end{equation*}
where $B_{2\alpha}$ is the Bernoulli polynomial of degree $2\alpha$.

Consider a set of lattice points $\{\bst_{k}\}_{k = 0}^{N - 1}$ defined by
	\begin{align*}
	%\label{eq:lattice}
	\bst_k = \frac{k\bsz\, \mathrm{mod}\,N}{N} \qquad \mbox{for}\quad k = 0,\ldots,N-1,
	\end{align*}
where $\bsz \in \N^s$ is a generating vector with components in $\{1,\ldots,N-1\}$ that are coprime to~$N$. 
For $g \in \calH_{\alpha, \bsgamma}(\Omega_s)$, the lattice-based kernel interpolant
(defined using function values of $g$ evaluated at the points $\{\bst_{k}\}_{k = 0}^{N - 1}$) is given by
\begin{equation}
\label{eq:interp-g}
 I_N g(\bsy) \,\coloneqq\,  I^s_N g(\bsy)= \sum_{k = 0}^{N - 1} a_{N, k}\,\calK_{\alpha,\bsgamma}(\bst_{k}, \bsy),
\end{equation}
such that $I_N g(\bst_{k'}) = g(\bst_{k'})$ for all $k'=0,\ldots,N-1$. 
The generating vector $\bsz$ is obtained using a component-by-component (CBC) construction algorithm similar to those described in \cite{CKNS20,CKNS21,KKKNS22}, where components of the vector are selected to minimise a bound on the worst-case error of approximation using the kernel method.
To ensure that $I_N g$ interpolates $g$ at the lattice points,
the coefficients $a_{N, k}$ are obtained by solving the linear system
\begin{equation}\label{eq:KI_LinSys}
K_{N, \alpha,\bsgamma}\, \bsa_N \,=\, \bsg_N\,,
\end{equation}
with $ \bsa_N \,=\, [a_{N, k}]_{k = 0}^{N - 1}$, 
$K_{N, \alpha,\bsgamma} = [\calK_{\alpha,\bsgamma}(\bst_k, \bst_{k'})]_{k, k' = 0}^{N - 1}$ 
and $\bsg_N = [g(\bst_{k})]_{k = 0}^{N - 1}$.

Due to the periodic and symmetric nature of the kernel, along with the properties of the lattice point set, the elements of $K_{N, \alpha,\bsgamma}$ satisfy
	\begin{align*}
	[K_{N, \alpha,\bsgamma}]_{k,k'}
	= \calK_{\alpha,\bsgamma}(\bst_k, \bst_{k'})
	= \calK_{\alpha,\bsgamma}(\bst_k- \bst_{k'},\bszero) 
	=\calK_{\alpha,\bsgamma}(\bst_{(k-k')\, \mathrm{mod}\, N},\bszero) 
	\end{align*}
for $k,k' = 0,\ldots,N-1$. This implies that $K_{N, \alpha,\bsgamma}$ is a symmetric, circulant matrix uniquely determined by its first column and can be diagonalised via FFT at cost $\calO(N \log N)$. The kernel only needs to be evaluated at $\lceil N/2 \rceil$ lattice points, since the first column is symmetric about its midpoint, and the linear system \eqref{eq:KI_LinSys} can be solved after diagonalising using the FFT.
  
A bound on the approximation error for the kernel interpolation of $g \in \calH_{\alpha, \bsgamma}(\Omega_s)$ using a CBC generated lattice point set is given in \cite[Theorem~3.3]{KKKNS22}, which states that
	\begin{align}\label{eq:lattice_KI_error}
	\|(I-I_N)g\|_{L^2(\Omega_s)} \leq \frac{\kappa}{[\varphi(N)]^{\frac{1}{4\lambda}}}
	\bigg(
	\sum_{\setu\subseteq\{1:s\}}\max(|\setu|,1)\,\gamma_\setu^\lambda\,[2\zeta(2\alpha\lambda)]^{|\setu|}
	\bigg)^{\frac{1}{2\lambda}}
	\|g\|_{\calH_{\alpha,\bsgamma}(\Omega_s)}
	\end{align}
	for all $\lambda\in(\frac{1}{2\alpha},1]$
	with $\kappa \coloneq \sqrt{2} \,(2^{2\alpha\lambda+1}+1)^{\frac{1}{4\lambda}}$. Here $I \coloneq I^s:\calH_{\alpha, \bsgamma}(\Omega_s)\to \calH_{\alpha, \bsgamma}(\Omega_s)$ is the identity operator, $\zeta$ is the Riemann zeta function defined by $\zeta(x) = \sum_{j=1}^\infty j^{-x}$, and $\varphi$ is the Euler totient function. Note that the original theorem presented in \cite{KKKNS22} requires the number of points $N$ to be prime, however, using  \cite[Theorem 3.4]{KMN23} the result above has been extended to non-prime $N$. 
	
There are $2^s$ weight parameters $\{\gamma_\setu\}_{\setu\subseteq\{1:s\}}$, too many to specify individually in practice. Therefore, special forms of weights $\gamma_\setu$ have been considered, including:
\begin{itemize}
\item Product weights: $\gamma_\setu = \prod_{j\in\setu} \gamma_j $
for some positive sequence $(\gamma_j)_{j\geq 1}$;
\item POD (``product and order dependent") weights: 
$\gamma_\setu = \Gamma_{|\setu|}\prod_{j\in\setu} \gamma_j$
for positive sequences $(\gamma_j)_{j\geq 1}$ and $(\Gamma_j)_{j\geq 0}$;
\item SPOD (``smoothness-driven product and order dependent") weights:
$\gamma_\setu  = \sum_{\bsv_\setu \in \{1:\alpha\}^{|\setu|}}$ $ \Gamma_{|\bsv_\setu|}\prod_{j\in \setu}\gamma_{j,v_j}$ for positive sequences
$(\gamma_{j,v_j})_{j\geq 1}$ and $(\Gamma_j)_{j\geq 0}$.
\end{itemize}
The error bound \eqref{eq:lattice_KI_error} holds for all forms of weights, but the computational cost differs, see the next subsection.

\subsection{Single-level kernel interpolation for PDEs}\label{sec:SLKI} 
Lattice-based kernel interpolation was first applied to PDE problems  in \cite{KKKNS22}. 
Denoting the dimension-truncated FE solution of \eqref{eq:variational_pde} by  $u^s_h(\cdot,\bsy) \in V_h$, the estimate \eqref{eq:lattice_KI_error} can be applied to the single-level kernel interpolant (see \cite[Theorem~4.4]{KKKNS22}) to obtain 
\begin{align}\label{eq:KI_KKKNS}
\|u^s_h- I_N u^{s}_{h}\|_{L^2(D\times\Omega)}
\lesssim \varphi(N)^{-\frac{1}{4\lambda}}\|f\|_{V'}\,C_s(\lambda)
\end{align}
for all $\lambda \in (\frac{1}{2\alpha},1]$, where
\begin{align*}
[C_s(\lambda)]^{2\lambda}& \coloneq \bigg(\sum_{\setu\subseteq\{1:s\}}\max(|\setu|,1)\,\gamma_\setu^\lambda\,[2\zeta(2\alpha\lambda)]^{|\setu|}\bigg)\\
&\qquad \times  \bigg(\sum_{\setu\subseteq\{1:s\}} \frac{1}{\gamma_\setu}
\bigg(\sum_{\bsm_\setu\in\{1:\alpha\}^{|\setu|}}|\bsm_\setu|!\,\bsb^{\bsm_\setu}
\prod_{i\in\setu}S(\alpha,m_i)\bigg)^2
 \bigg)^\lambda.
\end{align*}
In \cite{KKKNS22}, the weights $\gamma_\setu$ are chosen to ensure the constant $C_s(\lambda)$ can be bounded independently of dimension $s$. Different forms of weights (SPOD, POD, product) achieve dimension independent bounds with some concessions on the rate of convergence.

The single-level kernel interpolation methodology from \cite{KKKNS22} is summarised below: 
\begin{enumerate}

\item Compute the first column of $K_{N,\alpha,\bsgamma}$ in \eqref{eq:KI_LinSys} (i.e., compute $[\calK_{\alpha,\bsgamma}(\bst_k,\bszero)]_{k=0}^{N-1}$) with cost $\calO(s^\rho \,\alpha^{\varsigma} N)$, where $\rho=2$ and $\varsigma = 2$ for SPOD weights, $\rho=2$ and $\varsigma =0$ for POD weights, $\rho=1$ and $\varsigma = 0$ for product weights, see \cite[Table 2]{KKKNS22}.

\item Evaluate the coefficient \eqref{eq:coeff} at each lattice point and FE node to set up the stiffness matrix at the cost $\calO(s\, h^{-d}N)$.

\item Compute the FE solution $u^s_h$ (denoted as $g$ in \eqref{eq:KI_LinSys}) for the PDE associated with each lattice point to construct $\bsg_N$ on the right-hand side of \eqref{eq:KI_LinSys} for every FE node at the cost $\calO(h^{-\tau}N)$ for some $\tau>d$, with $\tau \approx d$ for a linear complexity FE solver (e.g., an algebraic multigrid solver).

\item Solve the circulant linear system \eqref{eq:KI_LinSys} for coefficients $\bsa_N$ of the interpolant for every FE node at the cost $\calO( h^{-d} N\log N)$. 
\end{enumerate}
The total cost of construction for the single-level interpolant is therefore
\begin{align}\label{eq:SL_cost}
 \mathrm{cost}(I_Nu_h^s) \simeq s^\rho\alpha^\varsigma N 
 + s\, h^{-d} N + h^{-\tau} N + h^{-d}N\log N.
\end{align}
There is a pre-computation cost (varies with the form of weights) associated with the CBC construction of the lattice generating vector $\bsz$. There is also a post-computation cost to assemble the approximation.

From \cite{KKKNS22}, the error satisfies
\begin{align*}
 	\mathrm{error}(I_Nu^s_h) \lesssim s^{-\kappa} + h^{\beta} + N^{-\mu},
\end{align*}
see \eqref{eq:DT_KKKNS}, \eqref{eq:FE_KKKNS}, \eqref{eq:KI_KKKNS},
with $\kappa= \frac{1}{p}-\frac{1}{2}$, $\beta=2$, and $\mu = \frac{1}{4\lambda}$ for $\lambda \in (\frac{1}{2\alpha},1]$.
The cost can now be expressed in terms of the error $\varepsilon>0$ as follows. We demand that each of the three components of the error is bounded above and below by multiples of $\varepsilon$, i.e., $s\simeq\varepsilon^{-\frac{1}{\kappa}}$, $h \simeq \varepsilon^\frac{1}{\beta}$ and $N\simeq \varepsilon^{-\frac{1}{\mu}}$. It follows that there exists $C>0$ such that $N^\mu \leq C s^\kappa$, which gives $\log N \leq \frac{1}{\mu}(\log C+ \kappa \log s)
  \leq \frac{1}{\mu}(\log C +\kappa)\,s$ since $s\geq 1$, and therefore $\log N \lesssim s$.
Assuming that $d\le \tau\le d + \frac{\beta}{\kappa}$ and treating $\alpha^\varsigma$ as a constant,
the cost \eqref{eq:SL_cost} can be bounded further by
  \begin{align}
  \label{eq:SLcostFinal}
  	\mathrm{cost}(I_Nu^s_h) \lesssim s^\rho N +s\, N h^{-d} 
  \simeq \varepsilon^{-\frac{\rho}{\kappa}-\frac{1 }{\mu}}+
  \varepsilon^{-\frac{1}{\kappa}-\frac{1 }{\mu}-\frac{d}{\beta}}
  \simeq \varepsilon^{-\max(\frac{\rho}{\kappa}+\frac{1}{\mu},\frac{1 }{\kappa}+\frac{1}{\mu}+\frac{d}{\beta})}.
\end{align}
In the case of product weights, we have $\rho=1$ and $\mathrm{cost}(I_Nu^s_h) \lesssim \varepsilon^{-\frac{1}{\kappa}-\frac{1}{\mu}-\frac{d}{\beta}}$.

\section{Multilevel kernel approximation}\label{sec:MLKI}

Consider a sequence of conforming FE spaces $\{V_\ell\}_{\ell = 0}^\infty$, 
where each $V_\ell \coloneq V_{h_\ell}\subset V$ corresponds to a shape regular triangulation
$\calT_\ell$ of $D$ with mesh width 
$h_\ell \coloneqq \max \{ \diam(\tri) : \tri \in \calT_\ell\} > 0 $
and $\dim (V_\ell) = M_\ell < \infty$. Recall that $M_\ell = \calO(h_\ell^{-d})$.
Then for $\ell \in \N$, we denote the dimension-truncated FE approximation in the space $V_\ell$ by  
$u_\ell(\cdot, \bsy) \coloneq u^s_{h_\ell}(\cdot,\bsy)=u^s_{h_\ell}(\cdot,\bsy_{1:s}) \in V_\ell$. 

For a maximum level $L \in \N$ and setting  $u_{-1} \coloneq 0$, the multilevel  kernel approximation is given by \eqref{eq:ml-ker},
where $\{I_\ell\}_{\ell \in \N}$ is a sequence of interpolation operators such that each $I_\ell \coloneqq I_{N_\ell}$ is a real-valued kernel interpolant based on $N_\ell$ lattice points $\{\bst_{\ell, k}\}_{k = 0}^{N_\ell - 1}$ 
as defined in Section~\ref{sec:kernel_int}. 
The interpolants are ordered in terms of nonincreasing accuracy, or equivalently, nonincreasing
numbers of interpolation points, i.e., $N_0 \ge N_1 \ge N_2 \ge \cdots$. The intuition is that the magnitude of $u_\ell-u_{\ell-1}$ decreases with increasing $\ell$, thus requiring fewer interpolation points to achieve  reasonable accuracy.

We assume that the FE solution at each level is approximated with increasingly fine meshes corresponding to mesh widths $h_0>h_1>\cdots$ (i.e., the approximation $u_\ell$ increases in accuracy as the level increases), so that approximations using large values of $N_\ell$ are compensated by coarser FE meshes, thus moderating the cost. To simplify the ML algorithm, we additionally assume that the FE spaces are nested, i.e.,
$V_{\ell} \subset V_{\ell + 1}$ for $\ell \in \N$. 
If non-nested FE spaces are used, a ``supermesh" (the mesh corresponding to the space spanned by the basis functions of both $V_\ell$ and $V_{\ell-1}$) must be considered, which adds to the computational cost (see e.g., \cite{CF20,CGRF18,FPPGW09}).

Similarly, we assume the lattice points on each level are nested (i.e., the points used on each level $\ell$ form a subset of the points used on level $\ell-1$), which can be achieved using an embedded lattice rule (see \cite{KMN23}). Thus, the space spanned by the kernel basis functions on level $\ell$ is a subspace of the one spanned by the kernel basis functions on level $\ell-1$. This proves to be advantageous since the generating vector for the lattice only needs to be constructed once and FE evaluations can be reused between levels, e.g., evaluations used to compute $I_{\ell-1} u_{\ell-1}$ can be reused to compute $I_{\ell}u_{\ell-1}$.

To compute the multilevel kernel approximation, on  each level $\ell$, we compute the $N_\ell$-point interpolant $I_\ell$ of the \emph{difference of a FE approximation} on a fine mesh, $u_\ell(\cdot, \bsy) \in V_\ell$, and a coarse mesh, $u_{\ell - 1}(\cdot, \bsy) \in V_{\ell - 1}$. As a result, the final approximation is \emph{not} a direct interpolation of the solution $u$, and thus $I_L^{\rm{ML}}$ will be referred to exclusively as the multilevel kernel approximation.

\subsection{Error decomposition for multilevel methods}

The multilevel kernel approximation error can be expressed as (omitting the dependence on $\bsx$ and $\bsy$)
	\begin{align*}%\label{eq:error_decomp}
	u - I^{\ML}_L u 
	&= u -\sum_{\ell = 0}^L I (u_\ell - u_{\ell - 1})+\sum_{\ell = 0}^L I(u_\ell - u_{\ell - 1})- \sum_{\ell = 0}^L I_\ell (u_\ell - u_{\ell - 1})\notag\\
	&= u - u_L + \sum_{\ell = 0}^L (I-I_\ell) (u_\ell - u_{\ell - 1}), 
	\end{align*}
where $I:\calH_{\alpha,\bsgamma}(\Omega_s) \to \calH_{\alpha,\bsgamma}(\Omega_s)$ denotes the identity operator, and we define $u_{-1} \coloneq 0$. 
Following the methodology of \cite{KKKNS22}, we take the $L^2(D)$ norm and $L^2(\Omega)$ norm, then use the triangle inequality to obtain the error estimate, 
	\begin{align}\label{eq:total_err}
	\mathrm{error}(I^{ML}_Lu) 
	&= \|u - I^{\ML}_L u \|_{L^2(\Omega\times D)} \notag\\
	&\leq 
	\|u - u_L\|_{L^2(\Omega \times D)}
	+\sum_{\ell=0}^L\|(I-I_\ell) (u_\ell - u_{\ell - 1})\|_{L^2(\Omega \times D)}.
\end{align}

The first term in \eqref{eq:total_err} is often referred to as the \emph{bias} in multilevel literature, which can be further separated into a dimension truncation error and a FE error
\begin{align}\label{eq:split}
 \|u - u_L\|_{L^2(\Omega \times D)}
 \leq \|u - u^{s}\|_{L^2(\Omega \times D)}+\|u^{s} - u^{s}_{h_L}\|_{L^2(\Omega \times D)},
\end{align}
where the two components can be bounded using \eqref{eq:DT_KKKNS} and \eqref{eq:FE_KKKNS}.
The bias is controlled by choosing $s$ and $h_L$ as necessary to obtain a prescribed error. The second term in \eqref{eq:total_err} is the error associated with the multilevel scheme, which is controlled by the choice of interpolation points $N_\ell$ and the fineness of the FE mesh $h_\ell$ at each level.
 
\subsection{Cost analysis of multilevel methods}
The cost of constructing the multilevel kernel approximation is similar to the single-level interpolant, except now the cost is ``spread" over multiple levels.  Recall from Subsection~\ref{sec:SLKI} that the total cost of construction for the single-level kernel interpolant is given by \eqref{eq:SL_cost}.
For the multilevel algorithm, the total cost is from
	\begin{enumerate}
	\item evaluating the kernel functions for the full lattice point set, $\calO(s^\rho \,\alpha^{\varsigma} N_0)$,
	\item then, for each level $\ell$ with $N_\ell\le N_0$,
		\begin{enumerate}
		\item evaluating the coefficient \eqref{eq:coeff} at each lattice point and FE node to set up the stiffness matrix, $\calO(s\, h^{-d}_{\ell} N_{\ell})$,
		\item solving for the FE solution at each lattice point, $\calO(h_\ell^{-\tau} N_\ell)$, and
		\item solving the linear system for coefficients of the interpolant at every FE node, $\calO(h_\ell^{-d} N_\ell \log N_\ell)$.
		\end{enumerate} 
	\end{enumerate}
Since the lattice points are nested, 
the total cost of computation is
	\begin{align}\label{eq:ML_cost}
	\mathrm{cost}(I^\mathrm{ML}_Lu) \simeq s^\rho \alpha^\varsigma N_0 + \sum_{\ell=0}^L  (s\,h_\ell^{-d} N_\ell + h_\ell^{-\tau} N_\ell + h_\ell^{-d}N_\ell\log N_\ell).
	\end{align}
Since kernel functions only need to be evaluated for each lattice point once and can be reused at each level as necessary, this cost, given by $s^\rho \alpha^\varsigma N_0$, is independent of $\ell$ and is outside the summation. 	For details on practical implementation, see Section~\ref{sec:implementation}.

\subsection{Abstract complexity analysis}

Theorem~\ref{thm:ml-complexity} below is an abstract complexity theorem for the error and cost of our multilevel approximation. It specifies a choice of $s$, $L$, $N_\ell$ and $h_\ell$ for $\ell =0,\ldots,L$ such that the total error of approximation can be bounded by some given $\varepsilon>0$. 
Assumption~\eqref{asm:trun_error} is motivated by the bias split \eqref{eq:split} together with \eqref{eq:DT_KKKNS} and \eqref{eq:FE_KKKNS}, and $s$ is chosen to balance the two terms.
Assumption~\eqref{asm:cost} is motivated by the cost estimate \eqref{eq:ML_cost}, where $\alpha^\varsigma$ is treated as constant, and a bound on $\tau$ is assumed to simplify the cost.
The error analysis in Section~\ref{sec:error} will justify Assumption \eqref{asm:ml_error} with precise values of the relevant constants.

\begin{theorem} \label{thm:ml-complexity}
Given $h_0\in (0,1)$ and $d\ge1$, define $h_{\ell} \coloneq h_0\,2^{-\ell}$ for $\ell\geq 0$, and suppose there are positive constants 
$\beta,\kappa,\mu,\rho$, and $\tau$ such that
\begin{enumerate}[label={\textnormal{(M\arabic*)}},ref=M\theenumi]
\item $\|u-u_L\|_{L^2(\Omega\times D)}
\lesssim  s^{-\kappa}+ h_L^{\beta} $,
\label{asm:trun_error}
\item $\|(I-I_0) u_0\|_{L^2(\Omega\times D)}
\lesssim N_0^{-\mu}$ and $\|(I-I_\ell) (u_\ell - u_{\ell-1})\|_{L^2(\Omega\times D)}
\lesssim N_\ell^{-\mu}h_{\ell-1}^{\beta}$\,\, for $\ell= 1,\ldots,L$, and 

\label{asm:ml_error}
\item $\mathrm{cost}(I^{\ML}_L u) \,\lesssim \,
	 \displaystyle 
	 s^\rho N_0 + \sum_{\ell=0}^L N_\ell\,( s\, h_\ell^{-d} + h_\ell^{-\tau}+h_\ell^{-d} \log N_\ell)$.
\label{asm:cost}
\end{enumerate}
Given $0<\varepsilon<\min(1,2h_0^\beta)$,
and assuming
$d\leq\tau \leq d + \frac{\beta}{\kappa}$, we may choose integers $L$  given by  \eqref{eq:cond_L}, $s\simeq h_L^{-\frac{\beta}{\kappa}}$, and $N_0,\ldots,N_L$ given by \eqref{eq:Nvals}, such that $\mathrm{error}(I^{\ML}_Lu) \lesssim \varepsilon$ and
	\begin{align}\label{eq:complexity}
	\mathrm{cost}(I^{\ML}_Lu)\lesssim
	\begin{cases}
	    \eps^{-\frac{\rho}{\kappa}-\frac{1}{\mu}}
	    	&\quad \mbox{when}\,\, \frac{d}{\beta}<\frac{1}{\mu}, \\
 	    \eps^{-\frac{\rho}{\kappa}-\frac{1}{\mu}}\, (\log \eps^{-1})^{1+\frac{1}{\mu}}
 	    	&\quad \mbox{when}\,\, \frac{d}{\beta}=\frac{1}{\mu}, \\
  		\eps^{-\frac{\rho}{\kappa}-\frac{1}{1+\mu}(\frac{d}{\beta}+1)} 
  			&\quad \mbox{when}\,\, \frac{1}{\mu}<\frac{d}{\beta}
  			\leq \frac{1}{\mu}+(\frac{1}{\mu}+1)\frac{\rho-1}{\kappa},\\
  		\eps^{-\frac{1}{\kappa}-\frac{d}{\beta}} 
  			&\quad \mbox{when}\,\, \frac{d}{\beta}>\frac{1}{\mu}+(\frac{1}{\mu}+1)\frac{\rho-1}{\kappa},
	\end{cases}
	\end{align}
where the implied constants depend on $h_0, d, \beta,\kappa,\mu,\rho$, and $\tau$.
\end{theorem}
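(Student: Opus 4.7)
The plan is to balance the error contributions in \eqref{asm:trun_error} and \eqref{asm:ml_error} through an optimal choice of $s$, $L$, and $N_0,\ldots,N_L$, then substitute these into \eqref{asm:cost} and perform a case analysis on the relative sizes of $d/\beta$, $1/\mu$, and $\rho$.

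For the bias, I would take $L$ to be the smallest nonnegative integer with $h_L^\beta \le \varepsilon/2$, so that $L \simeq \log_2(\varepsilon^{-1})$ and $h_L \simeq \varepsilon^{1/\beta}$, and set $s \simeq h_L^{-\beta/\kappa} \simeq \varepsilon^{-1/\kappa}$ so that both components in \eqref{asm:trun_error} are $\lesssim \varepsilon$. For the multilevel kernel error, I would adopt a cost-optimal Lagrangian choice: minimising the dominant per-level cost $s\,h_\ell^{-d}\,N_\ell$ subject to the constraint $N_0^{-\mu} + \sum_{\ell=1}^L N_\ell^{-\mu}\,h_{\ell-1}^\beta \lesssim \varepsilon$ from \eqref{asm:ml_error} gives $N_\ell \propto (h_\ell^d\,h_{\ell-1}^\beta)^{1/(\mu+1)}$ (setting $h_{-1}:=1$), with the common proportionality constant fixed by the error budget. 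Both the per-level error $N_\ell^{-\mu}h_{\ell-1}^\beta$ and per-level cost $s\,N_\ell\,h_\ell^{-d}$ then scale as the geometric factor $2^{\ell(d\mu-\beta)/(\mu+1)}$.

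Substituting into \eqref{asm:cost}, the cost bound follows by summing this geometric series: the sum is dominated by its smallest ($\ell=0$) term when $d/\beta<1/\mu$, equals $L+1$ times the common term when $d/\beta=1/\mu$, and is dominated by its largest ($\ell=L$) term when $d/\beta>1/\mu$. Using $h_L\simeq\varepsilon^{1/\beta}$, in the last regime one obtains $N_L\simeq 1$ and $N_0\simeq\varepsilon^{-(d/\beta+1)/(\mu+1)}$, so the fine-level FE cost is $s\,N_L\,h_L^{-d}\simeq\varepsilon^{-1/\kappa-d/\beta}$ while the kernel-evaluation cost is $s^\rho N_0\simeq\varepsilon^{-\rho/\kappa-(d/\beta+1)/(\mu+1)}$; comparing $\varepsilon$-exponents yields the crossover $d/\beta=1/\mu+(1+1/\mu)(\rho-1)/\kappa$ separating Cases 3 and 4. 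In the first two cases the total cost is dominated by $s^\rho N_0 \simeq \varepsilon^{-\rho/\kappa-1/\mu}$ (no log in Case 1; an extra $(\log\varepsilon^{-1})^{1+1/\mu}$ in Case 2, arising from $L+1$ equal terms multiplying the common $s\,N_0\,h_0^{-d}$). The hypothesis $\tau\le d+\beta/\kappa$ together with $s\simeq h_L^{-\beta/\kappa}$ lets one bound $h_\ell^{-\tau}\lesssim s\,h_\ell^{-d}$, and an argument as after \eqref{eq:SL_cost} gives $\log N_\ell \lesssim s$, so both secondary terms in \eqref{asm:cost} are absorbed into the leading FE cost.

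The main technical obstacle is the bookkeeping in this case analysis: verifying the exact thresholds and $\varepsilon$-exponents in \eqref{eq:complexity} by tracking constants through the Lagrangian optimisation and the geometric summation. In particular the crossover $d/\beta = 1/\mu + (1+1/\mu)(\rho-1)/\kappa$ between Cases 3 and 4 must be derived by equating the $\varepsilon$-exponents of $s^\rho N_0$ and $s\,N_L\,h_L^{-d}$, while the borderline $d/\beta=1/\mu$ needs separate treatment as the geometric sum contributes $L+1 \simeq \log\varepsilon^{-1}$ copies of the common term. One also needs to verify the monotonicity $N_0\ge N_1\ge\cdots\ge N_L$ required for a valid embedded-lattice hierarchy, which is automatic since $(h_\ell^d\,h_{\ell-1}^\beta)^{1/(\mu+1)}$ is decreasing in $\ell$.
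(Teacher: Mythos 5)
Your proposal is correct and follows essentially the same approach as the paper: balance the bias via $s^{-\kappa}\simeq h_L^\beta$, choose $L$ from $h_L^\beta\simeq\varepsilon$, carry out the Lagrangian optimisation of $\sum_\ell s\,N_\ell h_\ell^{-d}$ under the per-level error budget to get $N_\ell \propto h_\ell^{(d+\beta)/(1+\mu)}$, and split into cases according to whether the geometric factor $2^{\ell(d\mu-\beta)/(1+\mu)}$ is decreasing, flat, or increasing. The one point you gloss over and the paper handles carefully is the ceiling $N_\ell=\lceil\widehat N_\ell\rceil$: the ``$+1$'' contributes an additional $\sum_\ell s\,h_\ell^{-d}\simeq\varepsilon^{-1/\kappa-d/\beta}$ to the cost in every case; it happens to coincide with your fine-level term $s\,N_L h_L^{-d}$ when $d/\beta>1/\mu$ and to be subdominant otherwise, so your conclusions are unaffected, but a complete argument should note it.
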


\begin{proof}
Substituting Assumptions \eqref{asm:trun_error} and \eqref{asm:ml_error} into \eqref{eq:total_err} gives the error bound
	\begin{align*} %\label{eq:complex_error}
	\mathrm{error}(I^{\ML}_Lu)\, \lesssim\, 
	s^{-\kappa} + h_L^{\beta} + N_0^{-{\mu}}+\sum_{\ell=1}^L  N_\ell^{-{\mu}}h_{\ell-1}^{\beta}.
	\end{align*}
Choosing $s$ to balance the two components of error in Assumption \eqref{asm:trun_error}, i.e., setting  $s^{-\kappa} \simeq h_L^\beta$, the bound further simplifies to
	\begin{align}\label{eq:simp_error}
	\mathrm{error}(I^{\ML}_L u)\, 
	&\lesssim\,
	{h_L^{\beta} 
	+ \sum_{\ell=0}^L  N_\ell^{-{\mu}}h_{\ell}^{\beta},}
	\end{align}
where the implied constant depends on a factor $\max(h_0^{-\beta},2^\beta)$.

We require that $\mathrm{error}(I^{\ML}_Lu)\, \lesssim\,\varepsilon$, which holds if each of the two terms in \eqref{eq:simp_error} is bounded by $\varepsilon/2$.  
So from the first term we choose $L$ such that $h_L^\beta = h_0^\beta\,2^{-L\beta} \leq \varepsilon/2$, yielding the conditions $L \geq \log_2 (2h_0^\beta\,\varepsilon^{-1})/\beta$.
Taking the smallest allowable value of $L$ with the ceiling function, we obtain
		\begin{align}\label{eq:cond_L}
		L \coloneq \bigg\lceil \frac{\log_2(2h_0^\beta\, \varepsilon^{-1})}{\beta}\bigg\rceil, 
		\quad \mbox{which implies}\quad
		2^L \simeq \varepsilon^{-\frac{1}{\beta}}.
		\end{align}
To ensure $L\geq1$ so that we are not in the trivial single-level case, we require that the value inside the ceiling function is positive, giving the condition $\varepsilon < 2h_0^\beta$.  
For the second term in \eqref{eq:simp_error}, we demand that
	\begin{align}\label{eq:new_simp_error}
	\sum_{\ell=0}^L  N_\ell^{-{\mu}}h_{\ell}^{\beta} \leq  \frac{\varepsilon}{2}.
	\end{align}

Since we have assumed that $\tau \leq d + \frac{\beta}{\kappa}$, which follows from desiring $s\, h_\ell^{-d} \simeq h_L^{-\frac{\beta}{\kappa}}h_\ell^{-d} \geq h_\ell^{-\tau}$, Assumption \eqref{asm:cost} simplifies to
	\begin{align}\label{eq:simp_cost1}
	\mathrm{cost}(I^{\ML}_L u) 
	\lesssim  s^\rho N_0
	+\sum_{\ell=0}^L  N_\ell \,(s\, h_\ell^{-d} + h_\ell^{-d}\log N_\ell).
	\end{align}
If $\log N_\ell\lesssim s$ for all $\ell=0,\ldots L$, then, using $s^{-\kappa} \simeq h_L^\beta$, the cost \eqref{eq:simp_cost1} can be bounded by 
\begin{align} \label{eq:simp_cost}
	\mathrm{cost}(I^{\ML}_L u) 
	\lesssim h_L^{-\frac{\rho\,\beta}{\kappa}} N_0 + h_L^{-\frac{\beta}{\kappa}}  \sum_{\ell=0}^L  N_\ell\,  h_\ell^{-d},
\end{align}	
where the first term represents a setup cost and the second term is the multilevel cost. 

We now proceed to choose $N_0,\ldots,N_L$ by minimising the multilevel cost term in  \eqref{eq:simp_cost}
subject to the constraint \eqref{eq:new_simp_error}, with equality instead of $\leq$. 
We will later verify that the condition $\log N_\ell \lesssim s$ is indeed true.
The Lagrangian for this optimisation is
	\begin{align*}
	\calL(\widehat N_0,\ldots, \widehat N_L,\chi) \coloneq
	  h_L^{-\frac{\beta}{\kappa}} \sum_{\ell=0}^L  \widehat N_\ell \,h_\ell^{-d} 
	 + \chi\bigg(\sum_{\ell=0}^L \widehat N_\ell^{-{\mu}}h^{\beta}_\ell -  \frac{\varepsilon}{2}\bigg),
	\end{align*}
	where $\chi$ is the Lagrange multiplier and $\widehat N_\ell$ for $\ell=0,\ldots,L$ are continuous variables. This gives us the following first-order optimality conditions
		\begin{align}
		\frac{\partial \calL}{\partial \widehat N_\ell} 
		&=  h_L^{-\frac{\beta}{\kappa}} h_\ell^{-d} 
		- {\chi}\,{\mu}\,\widehat N_\ell^{-{\mu}-1}h_{\ell}^\beta = 0 \quad \mbox{for } \ell=0,\ldots,L,
		\label{eq:FOC1}\\
		\frac{\partial \calL}{\partial \chi} 
		&= \sum_{\ell=0}^L \widehat N_\ell^{-{\mu}}h_{\ell}^{\beta} -  \frac{\varepsilon}{2} =0. \label{eq:FOC2}
		\end{align}
Rearranging \eqref{eq:FOC1} gives
$\widehat N_\ell^{1 +{\mu}}h_\ell^{-(d+\beta)} = \chi\,\mu\, h_L^\frac{\beta}{\kappa}$ for $\ell=0,\ldots,L$, noting that the right-hand side is independent of~$\ell$. Thus, we have
$\widehat N_\ell^{1 + {\mu}}h_\ell^{-(d+\beta)} = \widehat N_0^{1 + {\mu}}h_0^{-(d+\beta)}$ for $\ell=1,\ldots,L$, so 
\begin{align}\label{eq:N_ell_choice}
	\widehat N_\ell = \widehat N_0\, \bigg(\frac{h_\ell}{h_0}\bigg)^{\frac{d+\beta}{1+\mu}}
	=\widehat N_0\, (2^{-\ell})^{\frac{d+\beta}{1+\mu}}
	\quad \mbox{for } \ell=1,\ldots,L.
	\end{align}
Substituting \eqref{eq:N_ell_choice} into \eqref{eq:FOC2}  gives
	\begin{align} \label{eq:N0}
	\widehat N_0^{-{\mu}}h_0^\beta\sum_{\ell=0}^L (2^\ell)^{\frac{\mu(d+\beta)}{1+\mu}-\beta}
	= \frac{\varepsilon}{2},
	\quad\mbox{which yields}\quad
	\widehat N_0
	= \bigg(2\,\varepsilon^{-1} h_0^\beta\sum_{\ell=0}^L 2^{\frac{(d\mu-\beta)\ell}{1+\mu}}\bigg)^\frac{1}{\mu}.
	\end{align}
To obtain integer values for $N_\ell$, we define 
		\begin{align}\label{eq:Nvals}
		N_\ell \coloneq \big\lceil \widehat{N}_\ell \big\rceil=
		\Big\lceil\widehat N_0\, {2}^{\frac{-(d+\beta)\ell}{1+\mu}}\Big\rceil
		\qquad\mbox{for } \ell=0,\ldots,L.
	\end{align}
Since $N_\ell \geq \widehat N_\ell$, the bound \eqref{eq:new_simp_error} continues to hold for this choice of $N_\ell$. 
	
Clearly, $N_0 \ge N_1 \ge \cdots \ge N_L$ as required.
We now verify that $\log N_0 \lesssim s$. Since $\varepsilon <2h_0^\beta$, we have $\widehat{N}_0>1$ from \eqref{eq:N0}, and therefore
\begin{align*}%\label{eq:N0_UB}
 N_0 <\widehat{N}_0+1 < 2\widehat{N}_0 
 &\le  2\bigg(2\,\varepsilon^{-1} h_0^\beta\, 
 (L+1)\,2^{\frac{|d\mu-\beta|}{1+\mu}L}
 \bigg)^\frac{1}{\mu}
 \lesssim \varepsilon^{- (1 + \frac{1}{\beta} + \frac{|d\mu-\beta|}{(1+\mu)\beta})\frac{1}{\mu}},
\end{align*}		
where we loosely overestimated the geometric series in \eqref{eq:N0} by taking $L+1$ times the largest possible term with absolute value in the exponent, and then used $L+1\le 2^L$ and $2^L \simeq \varepsilon^{-\frac{1}{\beta}}$ from \eqref{eq:cond_L}. Thus  
$\log N_0 \lesssim \log \varepsilon^{-1}$.
On the other hand, we have $s^{-\kappa} \simeq h_L^\beta \leq \frac{\varepsilon}{2}$ and so $s \gtrsim \varepsilon^{-\frac{1}{\kappa}}$. Hence we have $\log N_0 \lesssim s$ as required.
We conclude that the results from the optimisation with respect to the simplified cost function \eqref{eq:simp_cost} can be applied to the multilevel problem with cost given by Assumption \eqref{asm:cost}.
	
We now verify that the cost satisfies \eqref{eq:complexity} by substituting $N_0 \leq 2\widehat N_0$, $N_\ell \leq \widehat N_\ell+1 = \widehat N_0\,2^{-\frac{(d+\beta)\ell}{1+\mu}} + 1$, \eqref{eq:N0}, $h_\ell =h_0\,2^{-\ell}$, and $2^L \simeq \varepsilon^\frac{1}{\beta}$ into \eqref{eq:simp_cost}, resulting in
		\begin{align*}
		&\mathrm{cost}(I^{\ML}_L u) 
		\lesssim h_0^{-\frac{\rho\beta}{\kappa}}\varepsilon^{-\frac{\rho}{\kappa}}\widehat N_0+
		h_0^{-\frac{\beta}{\kappa}-d} \varepsilon^{-\frac{1}{\kappa}}
		\sum_{\ell=0}^L \Big(\widehat N_0\,2^{-\frac{(d+\beta)\ell}{1+\mu}} + 1\Big)\, 2^{d\ell}\\
		&= h_0^{-\frac{\rho\beta}{\kappa}}\varepsilon^{-\frac{\rho}{\kappa}}
		\Big(2\,\varepsilon^{-1} h_0^\beta\,E_L
		\Big)^\frac{1}{\mu}
		+h_0^{-\frac{\beta}{\kappa}-d}\varepsilon^{-\frac{1}{\kappa}}
		\Big(2\,\varepsilon^{-1} h_0^\beta\, E_L
		\Big)^\frac{1}{\mu}
		E_L
		+ h_0^{-\frac{\beta}{\kappa}-d}\varepsilon^{-\frac{1}{\kappa}}\sum_{\ell=0}^L 2^{d\ell}\\
		&\lesssim \varepsilon^{-\frac{\rho}{\kappa}-\frac{1}{\mu}}\,
		E_L^{\frac{1}{\mu}}
		+\varepsilon^{-\frac{1}{\kappa}-\frac{1}{\mu}}\,
		E_L^{\frac{1}{\mu}+1}
		+\varepsilon^{-\frac{1}{\kappa}-\frac{d}{\beta}}\\
		&\simeq
		\begin{cases}
	 \varepsilon^{-\frac{\rho}{\kappa}-\frac{1}{\mu}} 
	 &\mbox{when } \frac{d}{\beta}<\frac{1}{\mu},\\
	 \varepsilon^{-\frac{\rho}{\kappa}-\frac{1}{\mu}}(\log \varepsilon^{-1})^{\frac{1}{\mu}+1}
	 &\mbox{when } \frac{d}{\beta}=\frac{1}{\mu},\\
	 \varepsilon^{-\frac{\rho}{\kappa}-\frac{1}{1+\mu}(\frac{d}{\beta}+1)} +\varepsilon^{-\frac{1}{\kappa}-\frac{d}{\beta}}
	  &\mbox{when } \frac{d}{\beta}>\frac{1}{\mu},
	 \end{cases}
		\end{align*}
where the implied constant includes a factor $h_0^{-\max(\frac{\rho\beta}{\kappa},\frac{\beta}{\kappa}+d)}$, and we used
	\begin{align*}
	E_L :=
	\sum_{\ell=0}^L 2^{\frac{(d\mu-\beta)\ell}{1+\mu}}
	\simeq 
	 \begin{cases}
	 1 &\mbox{when } d\mu<\beta \; (\mbox{i.e., } \frac{d}{\beta}<\frac{1}{\mu}) ,\\
	 L
	 \simeq \log \varepsilon^{-1} &\mbox{when } d\mu=\beta\; 
	 (\mbox{i.e., }  \frac{d}{\beta}=\frac{1}{\mu} ),\\
	 2^{\frac{d\mu-\beta}{1+\mu}L}
	 \simeq \varepsilon^{-\frac{d\mu-\beta}{\beta(1+\mu)}}
	  &\mbox{when } d\mu>\beta \; (\mbox{i.e., } \frac{d}{\beta}>\frac{1}{\mu} ).
	 \end{cases}
	\end{align*}%
The final case in the cost (i.e., when $\frac{d}{\beta}>\frac{1}{\mu}$) is split into two cases based on which of the two terms dominates, resulting in the four cases in \eqref{eq:complexity}.
\end{proof}

The third case in \eqref{eq:complexity} becomes obsolete when $\rho=1$, i.e., when we have product weights. In this scenario, to achieve an accuracy $\bigO(\varepsilon)$, the cost for the multilevel approximation is $\calO\big(\varepsilon^{-\frac{1}{\kappa}-\max(\frac{1}{\mu},\frac{d}{\beta})}\big)$, compared to  $\calO\big(\varepsilon^{-\frac{1}{\kappa}-\frac{1}{\mu}-\frac{d}{\beta}}\big)$ for the single-level approximation.
This multilevel cost is near-optimal, since the cost of a single FE evaluation at the finest level is $\bigO(\varepsilon^{-\frac{{1}}{\kappa}-\frac{d}{\beta}})$ and the cost of interpolation at level $0$ at a single node is $\bigO(\varepsilon^{-\frac{1}{\kappa}-{\frac{1}{\mu}}})$.

We note that when $\rho>1$, we may encounter the scenario where the single-level cost and multilevel cost have the same order. This occurs when the setup cost $s^\rho N_0$ in Assumption \eqref{asm:cost} dominates, either due to a large dimension $s$ or because the contribution to the cost from the FE solves is relatively small due to fast convergence in FE error (i.e., $\beta$ is large). Such cases are exceptional, and it would be unnecessary to consider a multilevel approach in these situations.

\section{Parametric regularity analysis} \label{sec:stoch_reg}

Analysis of the multilevel kernel approximation error for parametric PDEs requires bounds on the mixed derivatives with respect to both the physical variable $\bsx$ and the parametric variable $\bsy$ simultaneously. The proofs of all parametric regularity lemmas in this section are given in Appendix~\ref{app:stoch_reg_pf}.

The following lemma provides a bound on the Laplacian of the derivatives (with respect to the parametric variables) of the solution to \eqref{eq:variational_pde}. It shows that $u$ and its derivatives with respect to $\bsy$ possess sufficient spatial regularity to establish estimates for the FE error.
Recall that Stirling numbers are defined by \eqref{eq:stirling_def}.

\begin{lemma}\label{lem:laplace_d_u}
Under Assumptions \eqref{asm:a_in_Linf}, \eqref{asm:a_bounds}, \eqref{asm:a_W1inf} and \eqref{asm:f}, for every $\bsy\in \Omega$, let $u(\cdot,\bsy)\in V$ be the solution to the problem \eqref{eq:variational_pde}.  Then for every $\bsnu\in \calF$ and all $\bsy\in\Omega$, we have that $\partial^\bsnu u(\cdot,\bsy)\in H^2(D)\cap V$ and 
	\begin{align}\label{eq:laplace_derivative}
	\|\Delta(\partial^{\bsnu}u(\cdot,\bsy))\|_{L^2(D)} \lesssim \,\|f\|_{L^2(D)}\, (2\pi)^{|\bsnu|}\sum_{\bsm\leq \bsnu} (|\bsm|+1)!\,\overline{\bsb}^{\bsm} \prod_{i\geq 1} S(\nu_i,m_i),
	\end{align}
where $\overline{\bsb}=(\overline{b}_j)_{j\geq 1}$ is defined in \eqref{eq:bb_bar} and the implied constant is independent of~$\bsy$.
\end{lemma}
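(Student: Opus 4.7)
\begin{proofof}{sketch for Lemma~\ref{lem:laplace_d_u}}
The plan is to derive a strong-form PDE for $\partial^{\bsnu}u$ by differentiating \eqref{eq:pde} with respect to $\bsy$, then to isolate $\Psi\,\Delta \partial^{\bsnu}u$ and bound its $L^2$-norm by induction on $|\bsnu|$, using the existing gradient estimate \eqref{eq:mixed_stoch_der} as an external input. The spatial $H^2$-regularity of $\partial^{\bsnu}u$ will follow from standard elliptic regularity on the convex polyhedron $D$ (cf.\ Assumption \eqref{asm:poly_dom}), applied recursively to the inhomogeneous problem satisfied by $\partial^{\bsnu}u$ once we show its right-hand side is in $L^2(D)$.

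First I would apply $\partial^{\bsnu}$ to $-\nabla\cdot(\Psi\nabla u)=f$ and use the Leibniz rule. Crucially, because $\Psi$ in \eqref{eq:coeff} is \emph{affine} in $\sin(2\pi y_j)$ and contains no cross-terms in $\bsy$, the mixed partial $\partial^{\bsm}\Psi$ vanishes unless $|\supp(\bsm)|\le 1$; when $\bsm = m\bse_j$ with $m\ge 1$ we have $\partial^{\bsm}\Psi = (2\pi)^m \sin(2\pi y_j + m\pi/2)\,\psi_j(\bsx)$, so $\|\partial^{\bsm}\Psi\|_{L^\infty(D)}\le (2\pi)^m\|\psi_j\|_{L^\infty(D)}$ and $\|\nabla_{\bsx}\partial^{\bsm}\Psi\|_{L^\infty(D)}\le (2\pi)^m\|\nabla\psi_j\|_{L^\infty(D)}$. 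Expanding $\nabla\cdot(\partial^{\bsm}\Psi\,\nabla\partial^{\bsnu-\bsm}u)$ and isolating the $\bsm=\bszero$ term gives, for $\bsnu\in\calF$,
\begin{align*}
\Psi\,\Delta\partial^{\bsnu}u
\,=\, -\nabla\Psi\cdot\nabla\partial^{\bsnu}u
\,-\, \sum_{j\ge 1}\sum_{m=1}^{\nu_j}\binom{\nu_j}{m}\Big[\nabla(\partial^{m\bse_j}\Psi)\cdot\nabla\partial^{\bsnu-m\bse_j}u
\,+\, \partial^{m\bse_j}\Psi\,\Delta\partial^{\bsnu-m\bse_j}u\Big]
\,-\, f\,\delta_{\bsnu,\bszero}.
\end{align*}
Dividing by $\Psi\ge\Psi_{\min}$ and taking $L^2(D)$-norms then yields a recursion for $D_{\bsnu}:=\|\Delta\partial^{\bsnu}u(\cdot,\bsy)\|_{L^2(D)}$ of the form
\begin{align*}
D_{\bsnu} \;\le\; \frac{\|\nabla\Psi\|_{L^\infty}}{\Psi_{\min}}\|\nabla\partial^{\bsnu}u\|_{L^2(D)}
\,+\, \sum_{j\ge 1}\sum_{m=1}^{\nu_j}\binom{\nu_j}{m}(2\pi)^m\bigl(\overline{b}_j\,\|\nabla\partial^{\bsnu-m\bse_j}u\|_{L^2(D)} + b_j\,D_{\bsnu-m\bse_j}\bigr)
\,+\, \tfrac{\|f\|_{L^2(D)}}{\Psi_{\min}}\delta_{\bsnu,\bszero}.
\end{align*}

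Next I would run induction on $|\bsnu|$. The base case $\bsnu=\bszero$ gives $\|\Delta u\|_{L^2(D)}\lesssim \|f\|_{L^2(D)}$ after using \eqref{eq:lax-milgram} together with the Poincar\'e embedding $\|f\|_{V'}\lesssim\|f\|_{L^2(D)}$ and Assumption \eqref{asm:a_W1inf}, matching the claim with $T_{\bszero}=1$. For the inductive step, insert the existing bound \eqref{eq:mixed_stoch_der} into the $\|\nabla\partial^{\bsnu}u\|$ and $\|\nabla\partial^{\bsnu-m\bse_j}u\|$ terms, and the inductive hypothesis into $D_{\bsnu-m\bse_j}$. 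After factoring out $\|f\|_{L^2(D)}(2\pi)^{|\bsnu|}/\Psi_{\min}^2$, what remains is a purely combinatorial inequality between sums weighted by $\overline{\bsb}^{\bsm}\prod_i S(\nu_i,m_i)$.

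The main obstacle, which will occupy the appendix, is the combinatorial bookkeeping needed to collapse this recursion into the closed form $\sum_{\bsm\le\bsnu}(|\bsm|+1)!\,\overline{\bsb}^{\bsm}\prod_i S(\nu_i,m_i)$. The two crucial ingredients are (i) the one-dimensional Pascal-type identity $\binom{\nu_j}{m}S(\nu_j-m,\cdot) \mapsto S(\nu_j,\cdot)$-style recurrence in a single coordinate, which lets one absorb the inner sum $\sum_{m=1}^{\nu_j}\binom{\nu_j}{m}$ against $S(\nu_j-m,\cdot)$ back into $S(\nu_j,\cdot)$; and (ii) the fact that promoting the weight $|\bsm|!\to(|\bsm|+1)!$ exactly accounts for the extra factor arising from combining the $\overline{b}_j$-contribution (which produces a term of index $\bsm$ already present in the $\nabla$-bound) with the $b_j$-contribution (which, by the hypothesis, already carries $(|\bsm|+1)!$ but with index shifted by $m\bse_j$). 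Once these identities are assembled, one obtains \eqref{eq:laplace_derivative}, and the membership $\partial^{\bsnu}u(\cdot,\bsy)\in H^2(D)\cap V$ follows because the right-hand side of the PDE for $\partial^{\bsnu}u$ has been shown to lie in $L^2(D)$ at every inductive step.
\end{proofof}
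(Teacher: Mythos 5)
Your proposal follows essentially the same route as the paper: differentiate the strong form, use the fact that $\partial^{\bsm}\Psi$ vanishes unless $\bsm$ is supported on a single coordinate, isolate $\Psi\,\Delta\partial^{\bsnu}u$, take $L^2(D)$-norms to obtain a recursion in $\|\Delta\partial^{\bsnu}u\|_{L^2(D)}$, feed in the known gradient bound \eqref{eq:mixed_stoch_der}, and collapse via Stirling-number identities. The only organizational difference is that you run the induction directly, whereas the paper first establishes a general recursion-to-closed-form statement (Lemma~\ref{lem:d_recursive_id} in the appendix) and then invokes it with the identity $\sum_{\bsl\leq\bsm}\binom{\bsm}{\bsl}|\bsl|!\,|\bsm-\bsl|! = (|\bsm|+1)!$ together with \eqref{eq:stirling_identity}; these are exactly the two combinatorial ingredients you correctly flag as the remaining work, so the sketch is sound.
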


The following lemma gives a bound on the derivatives with respect to the parametric variables of the FE error in $V$.

\begin{lemma}\label{lem:d_u-uh} Under Assumptions \eqref{asm:a_in_Linf}, \eqref{asm:a_bounds}, \eqref{asm:a_W1inf} and \eqref{asm:f}, for every $\bsy\in \Omega$, let $u(\cdot,\bsy)\in V$ be the solution to \eqref{eq:variational_pde} and $u_h(\cdot,\bsy)\in V_h$ be its piecewise linear FE approximation \eqref{eq:fe}. Then for every $\bsnu\in \calF$, and sufficiently small $h>0$, we have 
	\begin{align}\label{eq:FE_error_d_bnd}
	\|\partial^{\bsnu}(u-u_h)(\cdot,\bsy)\|_{V} \lesssim \,h\,\|f\|_{L^2(D)}\, (2\pi)^{|\bsnu|}\sum_{\bsm \leq \bsnu}(|\bsm|+2)!\,\overline{\bsb}^\bsm\prod_{i\geq 1}S(\nu_i,m_i),
	\end{align}
where $\overline{\bsb}=(\overline{b}_j)_{j\geq 1}$ is defined in \eqref{eq:bb_bar} and the implied constant is independent of $h$ and $\bsy$.
\end{lemma}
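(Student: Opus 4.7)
The plan is to split the $\bsnu$-derivative of the FE error into a piece where the $H^2$ FE bound \eqref{eq:d_fe_error} applies directly and a purely discrete piece in $V_h$, which I would control using coercivity together with the Leibniz-differentiated Galerkin orthogonality \eqref{eq:gal_orth}. A strong induction on $|\bsnu|$ then closes the resulting recursion to give the target bound.

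First I would write
\[
\partial^{\bsnu}(u - u_h)(\cdot,\bsy)
\,=\, (\sfI - \sfP^h_\bsy)\,\partial^{\bsnu} u(\cdot,\bsy) \,+\, w(\cdot,\bsy),\qquad
w \,:=\, \sfP^h_\bsy\,\partial^{\bsnu} u - \partial^{\bsnu}u_h \,\in\, V_h.
\]
Since Lemma~\ref{lem:laplace_d_u} already guarantees $\partial^{\bsnu}u(\cdot,\bsy)\in H^2(D)\cap V$, applying \eqref{eq:d_fe_error} followed by \eqref{eq:laplace_derivative} bounds the first summand by a constant times $h\,\|f\|_{L^2(D)}(2\pi)^{|\bsnu|}\sum_{\bsm\leq\bsnu}(|\bsm|+1)!\,\overline{\bsb}^{\bsm}\prod_{i\geq 1}S(\nu_i,m_i)$, i.e., essentially the target bound but with $(|\bsm|+1)!$ in place of $(|\bsm|+2)!$.

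For the discrete remainder $w$, I would test coercivity by $w$ itself. Using the $\calA(\bsy;\cdot,\cdot)$-orthogonality \eqref{eq:orth_proj} of $\sfP^h_\bsy$ to replace $\sfP^h_\bsy\,\partial^{\bsnu}u$ by $\partial^{\bsnu}u$ gives $\calA(\bsy;w,w)=\calA(\bsy;\partial^{\bsnu}(u-u_h),w)$. Next, applying $\partial^{\bsnu}_\bsy$ to the identity \eqref{eq:gal_orth} (valid for each fixed $v_h\in V_h$) and using Leibniz's rule to isolate the $\bsm=\bszero$ term yields, after substituting $v_h\leftarrow w(\cdot,\bsy)$ pointwise in $\bsy$,
\[
\calA(\bsy;\,\partial^{\bsnu}(u-u_h),\,w)
\,=\, -\sum_{\bszero\neq\bsm\leq\bsnu}\binom{\bsnu}{\bsm}\int_D \partial^{\bsm}\Psi\,\nabla\partial^{\bsnu-\bsm}(u-u_h)\cdot\nabla w\,\rd\bsx.
\]
Because of the rank-one structure of \eqref{eq:coeff}, only multi-indices $\bsm=k\,\bse_j$ with $|\supp(\bsm)|\leq 1$ contribute, and each satisfies $\|\partial^{\bsm}\Psi\|_{L^\infty(D)}\leq(2\pi)^{|\bsm|}\|\psi_j\|_{L^\infty(D)}$. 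Cauchy--Schwarz, coercivity, and division by $\|w\|_V$ (which may be assumed nonzero) then produce the recursive inequality
\[
\|w\|_V \,\leq\, \sum_{j\,:\,\nu_j\geq 1}\sum_{k=1}^{\nu_j}\binom{\nu_j}{k}(2\pi)^k\,b_j\,\|\partial^{\bsnu-k\bse_j}(u-u_h)\|_V.
\]

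Finally I would close by strong induction on $|\bsnu|$, with the standard base case $\bsnu=\bszero$ handled by \eqref{eq:d_fe_error} and \eqref{eq:laplace_derivative} at $\bsnu=\bszero$. The $(\sfI-\sfP^h_\bsy)\partial^{\bsnu}u$ piece supplies the ``base'' contribution with factorial $(|\bsm|+1)!$, while the recursive sum for $\|w\|_V$, once the induction hypothesis is plugged in, absorbs into the same structural bound precisely when the factorial index is inflated by one---this is where $(|\bsm|+2)!$ enters. The main obstacle is this combinatorial bookkeeping: one must invoke the Stirling recurrence $S(n,k)=k\,S(n-1,k)+S(n-1,k-1)$ and Vandermonde-style identities to re-index $\sum_{j}\sum_{k}\binom{\nu_j}{k}(\cdots)$ as a single sum over $\bsm\leq\bsnu$ matching the right-hand side of \eqref{eq:FE_error_d_bnd}, and use $b_j\leq\overline{b}_j$ (immediate from \eqref{eq:bb_bar}) to upgrade the $\bsb^{\bsm}$ factors arising from the recursion to $\overline{\bsb}^{\bsm}$. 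This mirrors the bookkeeping underlying \eqref{eq:mixed_stoch_der}, adapted to accommodate the additional factorial arising from the recursion and the overall prefactor $h$ inherited from the FE projection error.
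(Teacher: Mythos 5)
Your proposal matches the paper's proof in all essential respects: the decomposition $\partial^{\bsnu}(u-u_h) = (\sfI-\sfP^h_\bsy)\partial^{\bsnu}u + \sfP^h_\bsy\partial^{\bsnu}(u-u_h)$ (your $w$ is exactly $\sfP^h_\bsy\partial^{\bsnu}(u-u_h)$), the coercivity-plus-differentiated-Galerkin-orthogonality argument yielding the recursion in $\|\partial^{\bsnu-k\bse_j}(u-u_h)\|_V$, and the use of \eqref{eq:d_fe_error} together with Lemma~\ref{lem:laplace_d_u} on the $(\sfI-\sfP^h_\bsy)\partial^{\bsnu}u$ term. The only organisational difference is that the paper first solves the recursion abstractly via Lemma~\ref{lem:d_recursive_id} and then applies the convolution identity $\sum_{\bsl\le\bsm}\binom{\bsm}{\bsl}|\bsl|!\,(|\bsm-\bsl|+1)!=(|\bsm|+2)!/2$, whereas you propose a single direct strong induction; both routes close (in yours the elementary bound $(|\bsm|+1)!\,(|\bsm|+1)\le(|\bsm|+2)!$ suffices after applying identity~\eqref{eq:stirling_id}, so the bookkeeping is if anything slightly simpler than you fear).
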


We are interested in regularity estimates with respect to the $L^2$ norm. Thus, Lemma \ref{lem:L2_FE_Err_der} presents the bound on the $L^2$ norm of the derivatives of FE error obtained using a duality argument. 

\begin{lemma}\label{lem:L2_FE_Err_der}
Under Assumptions \eqref{asm:a_in_Linf}, \eqref{asm:a_bounds}, \eqref{asm:a_W1inf} and \eqref{asm:f}, for every $\bsy\in \Omega$, let $u(\cdot,\bsy)\in V$ be the solution to \eqref{eq:variational_pde} and $u_h(\cdot,\bsy)\in V_h$ be its piecewise linear FE approximation \eqref{eq:fe}. Then for every $\bsnu\in \calF$, and  sufficiently small $h>0$, we have 
\begin{align*}
\|\partial^\bsnu (u-u_h)(\cdot,\bsy)\|_{L^2(D)} 
\lesssim h^2\,\|f\|_{L^2(D)} \,(2\pi)^{|\bsnu|} \sum_{\bsm\leq\bsnu} (|\bsm|+5)!\, \overline{\bsb}^{\bsm} \prod_{i\geq 1} S(\nu_i,m_i),
\end{align*}
where $\overline{\bsb}=(\overline{b}_j)_{j\geq 1}$ is defined in \eqref{eq:bb_bar} and the implied constant is independent of $h$ and $\bsy$.
\end{lemma}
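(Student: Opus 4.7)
The natural approach is the Aubin--Nitsche duality argument combined with an induction on $|\bsnu|$, leveraging the $V$-norm bound of Lemma~\ref{lem:d_u-uh} and Galerkin orthogonality differentiated via Leibniz's rule.

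First, I would fix $\bsy \in \Omega$ and $\bsnu \in \calF$, set $e \coloneqq \partial^\bsnu (u - u_h)(\cdot,\bsy)$, and introduce the adjoint problem: find $z \in V$ such that $\calA(\bsy; v, z) = \langle e, v\rangle$ for all $v \in V$. Under Assumptions \eqref{asm:a_bounds}, \eqref{asm:a_W1inf}, and \eqref{asm:poly_dom}, standard elliptic regularity on convex polyhedra yields $z \in H^2(D) \cap V$ with $\|\Delta z\|_{L^2(D)} \lesssim \|e\|_{L^2(D)}$, and the identity $\|e\|_{L^2(D)}^2 = \calA(\bsy; e, z)$ is the starting point.

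Next, I would introduce $z_h \coloneqq \sfP^h_\bsy z \in V_h$ and split
\begin{align*}
\|e\|_{L^2(D)}^2 \,=\, \calA(\bsy; e, z - z_h) + \calA(\bsy; e, z_h).
\end{align*}
The first summand is bounded by Cauchy--Schwarz, using Lemma~\ref{lem:d_u-uh} for $\|e\|_V \lesssim h$ and the standard estimate \eqref{eq:d_fe_error} for $\|z - z_h\|_V \lesssim h \|\Delta z\|_{L^2(D)}$, which produces the required $h^2$ factor. For the second summand, I would differentiate the Galerkin orthogonality $\calA(\bsy; u - u_h, v_h) = 0$ with respect to $\bsy$ via Leibniz's rule to obtain
\begin{align*}
\calA(\bsy; e, v_h) \,=\, -\sum_{\bsm \leq \bsnu,\,\bsm \neq \bsnu} \binom{\bsnu}{\bsm} \int_D \partial^{\bsnu - \bsm} \Psi\,\nabla \partial^\bsm(u - u_h) \cdot \nabla v_h \,\rd\bsx,
\end{align*}
apply this with $v_h = z_h$, and handle each term by splitting $z_h = z - (z - z_h)$. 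The $(z - z_h)$ piece contributes an extra $h$ factor from \eqref{eq:d_fe_error} combined with the $V$-norm bound of Lemma~\ref{lem:d_u-uh} applied to $\partial^\bsm(u - u_h)$, while for the $z$ piece, integration by parts (valid because Assumption~\eqref{asm:a_W1inf} ensures $\partial^{\bsnu - \bsm}\Psi \in W^{1,\infty}(D)$) gives
\begin{align*}
\int_D \partial^{\bsnu - \bsm} \Psi\,\nabla \partial^\bsm(u - u_h) \cdot \nabla z \,\rd\bsx \,=\, -\int_D \partial^\bsm(u - u_h)\,\nabla \cdot \bigl(\partial^{\bsnu - \bsm}\Psi\,\nabla z\bigr) \,\rd\bsx,
\end{align*}
which is controlled by $\|\partial^\bsm(u - u_h)\|_{L^2(D)}\,\|\Delta z\|_{L^2(D)}$ up to lower-order terms absorbed via Poincar\'e. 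Since $|\bsm| < |\bsnu|$, the induction hypothesis supplies the required $h^2$ factor.

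Finally, assembling the three contributions and dividing by $\|e\|_{L^2(D)}$ yields the claimed $h^2$ estimate. The main obstacle I anticipate is the combinatorial bookkeeping: the Leibniz sum must be merged with the Stirling-number expansions from Lemma~\ref{lem:d_u-uh} and from the inductive hypothesis at level $|\bsm| < |\bsnu|$, and these nested sums have to collapse cleanly into the form $(2\pi)^{|\bsnu|}\sum_{\bsm \leq \bsnu}(|\bsm|+5)!\,\overline{\bsb}^\bsm \prod_{i \geq 1} S(\nu_i, m_i)$. The increase from $(|\bsm|+2)!$ in Lemma~\ref{lem:d_u-uh} to $(|\bsm|+5)!$ in the present statement has to absorb the additional factorial growth arising from the binomial coefficient $\binom{\bsnu}{\bsm}$ and the duality-plus-IBP manipulations, which I would handle by exploiting identities that bundle binomial coefficients with products of Stirling numbers into a single Stirling-type sum, analogous to the manipulations underlying Lemma~\ref{lem:laplace_d_u}.
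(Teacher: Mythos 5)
Your proposal is correct in broad strategy but takes a genuinely different route through the duality argument than the paper does. The paper also uses an Aubin--Nitsche dual problem, but crucially it keeps the dual data $g$ fixed (independent of $\bsy$) while differentiating the \emph{entire} scalar identity $\calG(u - u_h) = \calA(\bsy; u - u_h, v^g - v^g_h)$ with respect to $\bsy$; the Leibniz rule then produces a triple product involving $\partial^\bsm\Psi$, $\nabla\partial^\bsl(u - u_h)$, and $\nabla\partial^\bsk(v^g - v^g_h)$, and \emph{both} FE-error factors are bounded via Lemma~\ref{lem:d_u-uh}, each contributing a factor of $h$. The $(|\bsm|+5)!$ then pops out directly from the convolution identity $\sum_{\bsl\le\bsm}\binom{\bsm}{\bsl}(|\bsl|+2)!(|\bsm-\bsl|+2)! = (|\bsm|+5)!/30$ combined with the Stirling rearrangement~\eqref{eq:stirling_identity}; no induction, no integration by parts, and no appeal to the recursion lemma are needed inside this proof. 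Only at the very end is $g$ specialized to $\partial^\bsnu(u-u_h)(\cdot,\bsy)$.

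You instead take the adjoint solution $z$ for the specific right-hand side $e = \partial^\bsnu(u - u_h)$ and work with the undifferentiated identity $\|e\|_{L^2}^2 = \calA(\bsy; e, z)$, then recover the $\bsy$-derivative structure through the differentiated Galerkin orthogonality applied at $v_h = z_h$, with the extra split $z_h = z - (z - z_h)$ and an integration by parts. This is sound: the IBP is legitimate since $\partial^\bsm(u - u_h) \in H^1_0(D)$ and $(\partial^{\bsnu-\bsm}\Psi)\nabla z \in H^1(D)^d$ under Assumption~\eqref{asm:a_W1inf} and elliptic regularity of $z$, and the lower-order terms $\|\partial^\bsm(u-u_h)\|_{L^2}$ with $\bsnu - \bsm = k\bse_j$ land you precisely in the recurrence covered by Lemma~\ref{lem:d_recursive_id} (your ``induction on $|\bsnu|$'' is just that lemma spelled out). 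What this buys versus the paper: with $\bbB_\bsnu$ at factorial order $(|\bsm|+2)!$ and one more convolution step from the recursion lemma, you would in fact land at $(|\bsm|+3)!$, which is sharper than the stated $(|\bsm|+5)!$ (and a fortiori implies it). The cost is more moving parts — managing the recursion base case (as the paper warns in the proof of Lemma~\ref{lem:laplace_d_u}, one must check $\bbA_\bszero \leq \bbB_\bszero$), the IBP, and the $\Theta$-style collapsing of $\sum_{j,k}\binom{\nu_j}{k}$ against the interior Stirling sum — whereas the paper's double application of Lemma~\ref{lem:d_u-uh} folds all of this into a single convolution identity.
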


\section{Multilevel error analysis}\label{sec:error}

We are now ready to
derive an estimate for the final component of the error in \eqref{eq:total_err}.

\subsection{Estimating the multilevel FE error}
Clearly, $\|g\|_{L^2(\Omega)} \equiv \|g\|_{L^2(\Omega_s)}$ for any $g$ that is a function solely of $\bsy_{1:s}$. Then, taking the $L^2(\Omega)$-norm and using the triangle inequality, we have from \eqref{eq:lattice_KI_error} the following pointwise bound for some $\bsx\in D$,
	\begin{align}\label{eq:ML_KI}
	&\|(I-I_\ell) (u_\ell - u_{\ell - 1})(\bsx,\cdot)\|_{L^2(\Omega)}
	=\|(I-I_\ell) (u_\ell - u_{\ell - 1})(\bsx,\cdot)\|_{L^2(\Omega_s)} \notag\\
	 &\leq \frac{\kappa}{[\varphi(N_\ell)]^\frac{1}{4\lambda}}
	\bigg(
	\sum_{\setu\subseteq\{1:s\}}\max(|\setu|,1)\,\gamma_\setu^\lambda\,[2\zeta(2\alpha\lambda)]^{|\setu|}
	\bigg)^\frac{1}{2\lambda} 
	\|(u_\ell - u_{\ell - 1})(\bsx,\cdot)\|_{\calH_{\alpha,\bsgamma}(\Omega_s)}.
	\end{align}
where
\begin{align} \label{eq:ML_DT_FE}
\| (u_\ell - u_{\ell - 1})(\bsx,\cdot)\|_{\calH_{\alpha,\bsgamma}(\Omega_s)}\leq 
\| (u^{s} - u^{s}_{h_{\ell}})(\bsx,\cdot)\|_{\calH_{\alpha,\bsgamma}(\Omega_s)} +
 \|( u^{s} - u^{s}_{h_{\ell-1}})(\bsx,\cdot)\|_{\calH_{\alpha,\bsgamma}(\Omega_s)}.
\end{align}
It then follows that the sum over $\ell$ in \eqref{eq:total_err} can be bounded by
\begin{align}\label{eq:ML_error_decomp}
&\|(I-I_0)\,u_0\|_{L^2(\Omega_s \times D)}
+ \sum_{\ell=1}^L \|(I-I_\ell) (u_\ell - u_{\ell - 1})\|_{L^2(\Omega_s\times D)}\notag\\
&\leq 
\frac{\|f\|_{V'}C_s(\lambda)}{[\varphi(N_0)]^\frac{1}{4\lambda}}
+ \sum_{\ell=1}^L  \frac{\kappa}{[\varphi(N_\ell)]^\frac{1}{4\lambda}}
	\bigg(
	\sum_{\setu\subseteq\{1:s\}}\max(|\setu|,1)\,\gamma_\setu^\lambda\,[2\zeta(2\alpha\lambda)]^{|\setu|}
	\bigg)^\frac{1}{2\lambda} \notag\\
	&\quad\times \bigg(\sqrt{\int_D \|(u^{s} -u^{s}_{h_\ell})(\bsx,\cdot)\|_{\calH_{\alpha,\bsgamma}(\Omega_s)}^2\,\rd\bsx} +\sqrt{\int_D \|( u^{s}- u^{s}_{h_{\ell-1}})(\bsx,\cdot)\|_{\calH_{\alpha,\bsgamma}(\Omega_s)}^2\,\rd\bsx} \,\bigg),
\end{align}
where we separate out the $\ell=0$ term as it is simply the standard single-level kernel interpolation error for which the bound \eqref{eq:KI_KKKNS} applies. Combining \eqref{eq:ML_KI} and \eqref{eq:ML_DT_FE}  with the triangular inequality gives the last line.

Therefore, we seek to estimate the individual finite element error terms in the above expression to estimate the full multilevel kernel approximation error.

\begin{theorem}
\label{thm:ML_FE_error} Under Assumptions \eqref{asm:a_in_Linf}, \eqref{asm:a_bounds}, \eqref{asm:a_W1inf} and \eqref{asm:f}, for $\alpha\geq 1$ and weight parameters $(\gamma_{\setu})_{\setu\subset \N}$, let $u^s \in V$ be the solution to \eqref{eq:variational_pde} and $u^s_h\in V_h$ be the FE approximation \eqref{eq:fe} to $u^s$. Then the  following estimate holds
\begin{align*}
& \sqrt{\int_D \|(u^{s} -u^{s}_{h})(\bsx,\cdot) \|_{\calH_{\alpha,\bsgamma}(\Omega_s)}^2\,\rd\bsx} 
\\&
\lesssim
h^2\, \|f\|_{L^2(D)}\sqrt{\sum_{\setu\subseteq \{1:s\}}\frac{1}{\gamma_\setu}\bigg(\sum_{\bsm_\setu \in \{1:\alpha\}^{|\setu|}}(|\bsm_\setu|+5)!\,\overline \bsb^{\bsm_\setu}\prod_{i\in\setu} S(\alpha,m_i)\bigg)^2},
\end{align*}
where the implied constant is independent of $h$.
\end{theorem}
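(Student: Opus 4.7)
\begin{proofof}{Theorem~\ref{thm:ML_FE_error} (Proposal)}
The plan is to start from the definition of the Korobov norm~\eqref{eq:H_norm} applied pointwise to $g(\bsy) = (u^s - u^s_h)(\bsx,\bsy)$, use Fubini to interchange the integral over $D$ with the sum/integrals over $\Omega_s$, and then reduce everything to an application of Lemma~\ref{lem:L2_FE_Err_der} with the specific multi-index $\bsnu = \alpha\,\mathbf{1}_\setu$ (i.e., $\nu_j = \alpha$ for $j\in\setu$ and $0$ otherwise).

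First, I would write
\begin{align*}
\int_D \|(u^s-u^s_h)(\bsx,\cdot)\|_{\calH_{\alpha,\bsgamma}(\Omega_s)}^2\,\rd\bsx
= \sum_{\setu\subseteq\{1:s\}}\frac{1}{(2\pi)^{2\alpha|\setu|}\gamma_\setu}\int_D\!\int_{[0,1]^{|\setu|}} \bigg|\int_{[0,1]^{s-|\setu|}} \partial^{(\alpha,\setu)}(u^s-u^s_h)(\bsx,\bsy)\,\rd\bsy_{-\setu}\bigg|^2\rd\bsy_\setu\,\rd\bsx,
\end{align*}
where $\partial^{(\alpha,\setu)} \coloneqq \prod_{j\in\setu}\partial^\alpha/\partial y_j^\alpha$. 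Next, applying the Cauchy--Schwarz (Jensen) inequality to the inner integral over $\bsy_{-\setu}$ and swapping the order of integration via Fubini yields the pointwise-in-$\bsy$ upper bound
\begin{align*}
\int_D \|(u^s-u^s_h)(\bsx,\cdot)\|_{\calH_{\alpha,\bsgamma}(\Omega_s)}^2\,\rd\bsx
\,\leq\, \sum_{\setu\subseteq\{1:s\}}\frac{1}{(2\pi)^{2\alpha|\setu|}\gamma_\setu}\int_{[0,1]^s}\!\|\partial^{(\alpha,\setu)}(u^s-u^s_h)(\cdot,\bsy)\|_{L^2(D)}^2\,\rd\bsy.
\end{align*}

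The second step is to apply Lemma~\ref{lem:L2_FE_Err_der} with $\bsnu=\alpha\,\mathbf{1}_\setu$, noting $|\bsnu|=\alpha|\setu|$. The key simplification is that for $i\notin\setu$ we have $\nu_i=0$, and $S(0,m_i)$ vanishes unless $m_i=0$; while for $i\in\setu$ we have $S(\alpha,0)=0$ (since $\alpha\ge 1$), so the inner sum $\sum_{\bsm\le\bsnu}$ collapses to $\sum_{\bsm_\setu\in\{1:\alpha\}^{|\setu|}}$. Thus
\begin{align*}
\|\partial^{(\alpha,\setu)}(u^s-u^s_h)(\cdot,\bsy)\|_{L^2(D)}
\,\lesssim\, h^2\,\|f\|_{L^2(D)}\,(2\pi)^{\alpha|\setu|}\sum_{\bsm_\setu\in\{1:\alpha\}^{|\setu|}}(|\bsm_\setu|+5)!\,\overline\bsb^{\bsm_\setu}\prod_{i\in\setu}S(\alpha,m_i).
\end{align*}
Crucially, this bound is independent of $\bsy$, so the $\bsy$-integral over $[0,1]^s$ contributes a factor of $1$.

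Finally, substituting back, the factor $(2\pi)^{2\alpha|\setu|}$ from squaring cancels exactly the $(2\pi)^{-2\alpha|\setu|}$ appearing in the norm, leaving
\begin{align*}
\int_D \|(u^s-u^s_h)(\bsx,\cdot)\|_{\calH_{\alpha,\bsgamma}(\Omega_s)}^2\,\rd\bsx
\,\lesssim\, h^4\,\|f\|_{L^2(D)}^2\sum_{\setu\subseteq\{1:s\}}\frac{1}{\gamma_\setu}\bigg(\sum_{\bsm_\setu\in\{1:\alpha\}^{|\setu|}}(|\bsm_\setu|+5)!\,\overline\bsb^{\bsm_\setu}\prod_{i\in\setu}S(\alpha,m_i)\bigg)^2,
\end{align*}
and taking square roots delivers the claim. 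The proof is essentially bookkeeping once Lemma~\ref{lem:L2_FE_Err_der} is in hand; the only subtlety I anticipate is cleanly justifying the reduction of the $\bsm$-sum to the index set $\{1:\alpha\}^{|\setu|}$ via the boundary behaviour of the Stirling numbers, and ensuring the $(2\pi)$ factors are tracked correctly so that they cancel.
\end{proofof}
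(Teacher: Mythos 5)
Your proposal is correct and matches the paper's proof essentially step for step: the Korobov norm is unpacked via \eqref{eq:H_norm}, Cauchy--Schwarz (with the unit measure) is applied to the inner integral over $\bsy_{-\setu}$, Fubini--Tonelli moves the $D$-integral inside, Lemma~\ref{lem:L2_FE_Err_der} is invoked with $\bsnu=\alpha\mathbf{1}_\setu$, and the $(2\pi)^{2\alpha|\setu|}$ factors cancel before taking the square root. Your explicit remark on why the $\bsm$-sum collapses to $\{1:\alpha\}^{|\setu|}$ --- $S(0,m_i)=0$ for $m_i\geq 1$ outside $\setu$ and $S(\alpha,0)=0$ for $\alpha\geq 1$ inside $\setu$ --- is a correct and slightly more explicit justification of a point the paper leaves implicit.
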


\begin{proof}
From the definition of the norm of $\calH_{\alpha,\bsgamma}(\Omega_s)$ given in \eqref{eq:H_norm} and the Cauchy-Schwarz inequality, it follows that for any $\bsx\in D$,
	\begin{align*}
	&\|( u^{s}- u^{s}_{h})(\bsx,\cdot)\|_{\calH_{\alpha,\bsgamma}(\Omega_s)}^2 \\
	&= \sum_{\setu\subseteq \{1:s\}} \frac{1}{(2\pi)^{2\alpha|\setu|}\gamma_\setu}\int_{[0,1]^{|\setu|}}
	\bigg|\int_{[0,1]^{s-|\setu|}}\bigg(\prod_{j\in\setu}\frac{\partial^\alpha}{\partial y_j^\alpha} \bigg) ( u^{s}- u^{s}_{h})(\bsx,\bsy)\, \rd \bsy_{-\setu} \bigg|^2\rd\bsy_{\setu}\\
	&\leq \sum_{\setu\subseteq \{1:s\}} \frac{1}{(2\pi)^{2\alpha|\setu|}\gamma_\setu}\int_{[0,1]^{|\setu|}}
	\int_{[0,1]^{s-|\setu|}}\bigg|\bigg(\prod_{j\in\setu} \frac{\partial^\alpha}{\partial y_j^\alpha} \bigg) ( u^{s}- u^{s}_{h})(\bsx,\bsy)\bigg|^2\, \rd \bsy_{-\setu}\, \rd\bsy_{\setu}.
	\end{align*}
Now, taking the $L^2$-norm with respect to $D$ and applying the Fubini-Tonelli theorem gives
	\begin{align*}
 	&	\int_D \| ( u^{s}- u^{s}_{h})(\bsx,\cdot)\|_{\calH_{\alpha,\bsgamma}(\Omega_s)}^2\,\rd\bsx\\
 		&\leq \sum_{\setu\subseteq \{1:s\}} \frac{1}{(2\pi)^{2\alpha|\setu|}\gamma_\setu} 
	\int_{[0,1]^{s}}\int_D\bigg(\bigg(\prod_{j\in\setu} \frac{\partial^\alpha}{\partial y_j^\alpha} \bigg) ( u^{s}- u^{s}_{h})(\bsx,\bsy)\bigg)^2\,\rd\bsx\, \rd \bsy\\	 
	&=\sum_{\setu\subseteq \{1:s\}} \frac{1}{(2\pi)^{2\alpha|\setu|}\gamma_\setu} 
	\int_{[0,1]^{s}}\bigg\|\bigg(\prod_{j\in\setu} \frac{\partial^\alpha}{\partial y_j^\alpha} \bigg) ( u^{s}- u^{s}_{h})(\cdot,\bsy)\bigg\|^2_{L^2(D)}\, \rd \bsy\\
		&\lesssim  \sum_{\setu\subseteq \{1:s\}} \frac{h^4\,\|f\|^2_{L^2(D)}}{\gamma_\setu}
	\int_{[0,1]^{s}}
	 \bigg(\sum_{\bsm_\setu\in \{1:\alpha\}^{|\setu|}} (|\bsm_\setu|+5)! \, \overline{\bsb}^{\bsm_\setu}
	\prod_{i\in\setu}  S(\alpha,m_i)\bigg)^2
	\, \rd \bsy\\
	&=\, \,h^4\,\|f\|^2_{L^2(D)}  \sum_{\setu\subseteq \{1:s\}} \frac{1}{\gamma_\setu} \,
	 \bigg(\sum_{\bsm_\setu\in \{1:\alpha\}^{|\setu|}} (|\bsm_\setu|+5)! \, \overline{\bsb}^{\bsm_\setu}
	\prod_{i\in\setu}S(\alpha,m_i)\bigg)^2.
	\end{align*}
In the step with $\lesssim$, we applied Lemma~\ref{lem:L2_FE_Err_der} for each $\setu$ with $\bsnu$ given by $\nu_i = \alpha$ for $i\in\setu$ and $\nu_i = 0$ for $i\notin\setu$,
and $\bsm_\setu$ denotes the multi-index with $m_i=0$ for all $i\not\in\setu$, with
$\overline{\bsb}^{\bsm_\setu}\coloneq \prod_{i\in\setu} \overline{b}_i^{m_i}$. The implied constant is independent of $h$. Taking the square-root completes the proof.
\end{proof}

Applying Theorem \ref{thm:ML_FE_error} to \eqref{eq:ML_error_decomp}, we derive the bound on the error of the multilevel kernel approximation of $u_L = u^s_{h_L}$ which is presented below. The constant $C_s(\lambda)$ in \eqref{eq:KI_KKKNS} is bounded from above by $\calD_s(\lambda)$ defined in \eqref{eq:implied_cons} since $|\bsm_\setu|!<  (|\bsm_\setu|+5)!$ and $b_j \leq \overline{b}_j$. We also use the fact that $\|f\|_{V'} \lesssim \|f\|_{L^2(D)}$ to include the first term in \eqref{eq:ML_error_decomp} into the summation as the $\ell=0$ term.

\begin{theorem}\label{thm:ML_full_error} Under Assumptions \eqref{asm:a_in_Linf}--\eqref{asm:poly_dom} and \eqref{asm:f}, for $\alpha \in \N$ and $\lambda\in (\frac{1}{2\alpha},1]$, the error of the multilevel kernel approximation of the dimension-truncated FE solution $u_L$ satisfies
	\begin{align*}%\label{eq:MLKI_error}
	\sum_{\ell=0}^L\|(I-I_\ell)(u_\ell-u_{\ell-1})\|_{L^2(\Omega \times D)}
	\lesssim \|f\|_{L^2(D)}\, \calD_{s}(\lambda)
	\,\sum_{\ell=0}^L [\varphi(N_\ell)]^{-\frac{1}{4\lambda}}h_{\ell-1}^2,
	\end{align*}
with
	\begin{align}\label{eq:implied_cons}
	 [\calD_{s}(\lambda)]^{2\lambda} &\coloneq
	  \bigg(\sum_{\setu\subseteq\{1:s\}}\max(|\setu|,1)\,\gamma_\setu^\lambda\,[2\zeta(2\alpha\lambda)]^{|\setu|}\bigg)\notag\\
&\qquad \times  \bigg(\sum_{\setu\subseteq\{1:s\}}
 \frac{1}{\gamma_\setu}
\bigg(\sum_{\bsm_\setu\in\{1:\alpha\}^{|\setu|}}(|\bsm_\setu|+5)!\,
\overline\bsb^{\bsm_\setu}\prod_{i\in\setu}S(\alpha,m_i)\bigg)^2\bigg)^\lambda,
	\end{align}
where $\overline{\bsb}=(\overline{b}_j)_{j\geq 1}$ is defined in \eqref{eq:bb_bar} 
and $h_{-1} \coloneq 1$.
\end{theorem}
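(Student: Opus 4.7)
The plan is to start from the already-derived decomposition \eqref{eq:ML_error_decomp}, treat the $\ell=0$ contribution separately via the single-level estimate \eqref{eq:KI_KKKNS}, and control each $\ell\ge 1$ term by inserting Theorem~\ref{thm:ML_FE_error} (with $h=h_\ell$ and $h=h_{\ell-1}$) into the square-rooted $\calH_{\alpha,\bsgamma}(\Omega_s)$-integrals that appear there.

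For each $\ell \ge 1$, the triangle inequality in $\calH_{\alpha,\bsgamma}(\Omega_s)$ has already been used in \eqref{eq:ML_DT_FE} to reduce $\|u_\ell - u_{\ell-1}\|$ to the sum of two FE errors against $u^s$. I would apply Theorem~\ref{thm:ML_FE_error} to each of these, obtaining contributions proportional to $h_\ell^2$ and $h_{\ell-1}^2$ respectively, times the same square-rooted weighted sum over $\setu$. Because the meshes are refined so that $h_\ell \le h_{\ell-1}$, the $h_\ell^2$ term is dominated by $h_{\ell-1}^2$, so the two collapse into a single factor of (at most) $2h_{\ell-1}^2$ multiplying that sum. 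Multiplying by the prefactor $\kappa\,[\varphi(N_\ell)]^{-1/(4\lambda)}$ and the kernel sum raised to $1/(2\lambda)$ in \eqref{eq:ML_error_decomp}, the product of this kernel sum with the square of the FE sum recombines exactly to give the bracket defining $[\calD_s(\lambda)]^{2\lambda}$ in \eqref{eq:implied_cons}, so that the $1/(2\lambda)$ power returns $\calD_s(\lambda)$.

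For $\ell=0$, the bound \eqref{eq:KI_KKKNS} already gives the form $[\varphi(N_0)]^{-1/(4\lambda)}\|f\|_{V'}\,C_s(\lambda)$. A termwise comparison shows $C_s(\lambda) \le \calD_s(\lambda)$, since $|\bsm_\setu|! \le (|\bsm_\setu|+5)!$ and $b_j \le \overline{b}_j$ by \eqref{eq:bb_bar}; meanwhile $\|f\|_{V'} \lesssim \|f\|_{L^2(D)}$ by boundedness of $D$ (Poincar\'e). Adopting the convention $h_{-1}\coloneq 1$ folds the $\ell=0$ estimate into the $\ell=0$ entry of the target sum, and summing over $\ell$ completes the proof.

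The main bookkeeping obstacle is simply tracking the exponents $1/(2\lambda)$ and $1/(4\lambda)$ so that the kernel-sum factor and the FE-sum factor combine cleanly into $\calD_s(\lambda)$ as defined through its $(2\lambda)$-th power in \eqref{eq:implied_cons}; once that is confirmed, the rest is direct substitution of Theorem~\ref{thm:ML_FE_error} and the elementary inequality $h_\ell \le h_{\ell-1}$.
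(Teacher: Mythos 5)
Your proposal is correct and follows essentially the same route as the paper: start from the decomposition \eqref{eq:ML_error_decomp}, apply Theorem~\ref{thm:ML_FE_error} with $h=h_\ell$ and $h=h_{\ell-1}$, use $h_\ell \le h_{\ell-1}$ to collapse into a single $h_{\ell-1}^2$, observe that the $(\cdot)^{1/(2\lambda)}$ kernel factor times the square-rooted FE factor gives exactly $\calD_s(\lambda)$, and fold in the $\ell=0$ term via $C_s(\lambda)\le\calD_s(\lambda)$, $\|f\|_{V'}\lesssim\|f\|_{L^2(D)}$, and the convention $h_{-1}=1$. The bookkeeping of exponents you flag does check out, so nothing further is needed.
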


\subsection{Choosing the weight parameters $\gamma_\setu$}
We now choose the weights $\gamma_\setu$ to minimise $\calD_{s}(\lambda)$ and select $\lambda$ to obtain the best possible convergence rate, while ensuring that $\calD_{s}(\lambda)$ is bounded independently of $s$.

\begin{theorem}\label{thm:weight_select}
Suppose that Assumptions \eqref{asm:a_in_Linf}--\eqref{asm:f} hold. The choice of weights
	\begin{align}\label{eq:op_weights}
	\gamma_\setu \coloneq 
	\Bigg(
	\frac{\sum_{\bsm_\setu\in\{1:\alpha\}^{|\setu|}} (|\bsm_\setu|+5)!\,\overline{\bsb}^{\bsm_\setu}\prod_{i\in\setu} S(\alpha,m_i) }
	{\sqrt{|\setu|\,[2\zeta(2\alpha\lambda)]^{|\setu|}}}
	\Bigg)^\frac{2}{1+\lambda}
	\end{align}
for $\emptyset\neq \setu\subset\N$, $|\setu|<\infty$, and $\gamma_{\emptyset}\coloneq 1$, minimise $\calD_{s}(\lambda)$ given in \eqref{eq:implied_cons}. Here $\overline{\bsb}=(\overline{b}_j)_{j\geq 1}$ is defined in \eqref{eq:bb_bar}. In addition, if we take $\alpha \coloneq  \lfloor 1/q+1/2 \rfloor $ and $\lambda \coloneq \frac{q}{2-q}$, then $\calD_{s}(\lambda)$ can be bounded independently of dimension $s$. 
Thus, the multilevel kernel approximation of the dimension-truncated FE solution of \eqref{eq:variational_pde} satisfies
	\begin{align}\label{eq:MLKI_final_error}
	&\sum_{\ell=0}^L\|(I-I_\ell)(u_\ell-u_{\ell-1})\|_{L^2(\Omega \times D)} \lesssim
	\|f\|_{L^2(D)}\,\sum_{\ell=0}^L [\varphi(N_\ell)]^{-(\frac{1}{2q}-\frac{1}{4})}h_{\ell-1}^2 ,
	\end{align}
where we define $h_{-1}\coloneq 1$, and the implied constant is independent of $s$.
\end{theorem}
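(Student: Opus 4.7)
The plan is to prove the theorem in three stages: first minimise $\calD_s(\lambda)$ over the weights $\gamma_\setu$, then bound the minimiser independently of $s$ under the stated choices of $\alpha$ and $\lambda$, and finally substitute into Theorem~\ref{thm:ML_full_error}. For the minimisation, I write $[\calD_s(\lambda)]^{2\lambda} = P(\bsgamma)\,Q(\bsgamma)^\lambda$, where $P(\bsgamma) = \sum_\setu a_\setu \gamma_\setu^\lambda$ with $a_\setu \coloneq \max(|\setu|,1)[2\zeta(2\alpha\lambda)]^{|\setu|}$, and $Q(\bsgamma) = \sum_\setu b_\setu/\gamma_\setu$ with $b_\setu \coloneq A_\setu^2$, where $A_\setu$ denotes the inner sum over $\bsm_\setu$ appearing in \eqref{eq:implied_cons}. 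Differentiating $\log(PQ^\lambda)$ with respect to each $\gamma_\setu$ yields the stationarity condition $\gamma_\setu^{1+\lambda}\propto b_\setu/a_\setu$, and strict convexity in $\gamma_\setu>0$ confirms the minimum. Since any global rescaling cancels in the product $PQ^\lambda$, the optimal ratio is precisely \eqref{eq:op_weights} (with the convention $\gamma_\emptyset \coloneq 1$), and direct substitution collapses the product to
\[
[\calD_s(\lambda)]^{2\lambda} \,=\, R^{1+\lambda}, \qquad R \coloneq \sum_\setu a_\setu^{1/(1+\lambda)}\,b_\setu^{\lambda/(1+\lambda)}.
\]

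The choices $\lambda = q/(2-q)$ and $\alpha = \lfloor 1/q+1/2\rfloor$ are engineered so that $2\lambda/(1+\lambda)=q$, $1/(1+\lambda)=(2-q)/2$, and $2\alpha\lambda>1$ (ensuring $\zeta(2\alpha\lambda)<\infty$ and $\lambda\in(\tfrac{1}{2\alpha},1]$, as required by \eqref{eq:lattice_KI_error}). Hence
\[
R \,=\, \sum_\setu \max(|\setu|,1)^{(2-q)/2}\,[2\zeta(2\alpha\lambda)]^{|\setu|(2-q)/2}\,A_\setu^q,
\]
and because $0<q\le 1$ the subadditivity inequality $(\sum x_i)^q\le \sum x_i^q$ gives
\[
A_\setu^q \,\le\, \sum_{\bsm_\setu \in \{1:\alpha\}^{|\setu|}} [(|\bsm_\setu|+5)!]^q\,\overline{\bsb}^{q\bsm_\setu}\prod_{i\in\setu} S(\alpha,m_i)^q.
\]
Using the identity $|\bsm_\setu|! = \binom{|\bsm_\setu|}{\bsm_\setu}\prod_i m_i!$ to split the factorial into a product over $i\in\setu$ at the cost of a $|\setu|^{|\bsm_\setu|}$-type factor, and noting that each inner sum over $m_i\in\{1,\dots,\alpha\}$ contributes only a constant depending on $\alpha$ and $q$, reduces $R$ to the product-sum form $\sum_\setu |\setu|^{(2-q)/2}\prod_{i\in\setu} C\,\overline{b}_i^q$ for some $C=C(\alpha,q)$. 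The standard identity $\sum_\setu\prod_{i\in\setu}c_i = \prod_j(1+c_j)$ combined with $|\setu|^{(2-q)/2}\le c_\beta\, e^{\eta|\setu|}$ for any $\eta>0$ then reduces convergence of $R$ to the $q$-summability of $(\overline{b}_j)$, which is Assumption~\eqref{asm:bbar_sum}.

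The main technical hurdle lies in that final combinatorial manipulation: the non-product factors $\max(|\setu|,1)^{(2-q)/2}$ and $(|\bsm_\setu|+5)!$ obstruct a direct factorisation over $i\in\setu$, and one must absorb them at the price of only a harmless geometric factor so as not to destroy $q$-summability. This argument follows the template of \cite[proof of Thm.~4.4]{KKKNS22}; the only substantive departures are the replacement $\bsb\mapsto\overline{\bsb}$ (matched by the stronger Assumption~\eqref{asm:bbar_sum}) and the extra factor $(|\bsm_\setu|+5)!/|\bsm_\setu|!$, which is bounded by a polynomial in $|\bsm_\setu|$ and therefore easily absorbed into $C$. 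Once $\calD_s(\lambda)$ is bounded by a constant independent of $s$, substituting into Theorem~\ref{thm:ML_full_error} and using $1/(4\lambda) = (2-q)/(4q) = 1/(2q)-1/4$ delivers \eqref{eq:MLKI_final_error}.
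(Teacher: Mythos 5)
Your Lagrangian derivation of the optimal weights and the resulting collapse to $[\calD_s(\lambda)]^{2\lambda}=R^{1+\lambda}$ with $R=\sum_\setu a_\setu^{1/(1+\lambda)}b_\setu^{\lambda/(1+\lambda)}$ is correct; the paper instead invokes \cite[Lemma~6.2]{KSS12}, which gives the same conclusion. The algebra $2\lambda/(1+\lambda)=q$, $1/(1+\lambda)=(2-q)/2$, $1/(4\lambda)=1/(2q)-1/4$, and the subadditivity step $(\sum_i x_i)^q\le\sum_i x_i^q$ are likewise fine.

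The gap is in the combinatorial step that bounds $R$. You propose to split $|\bsm_\setu|!$ via $|\bsm_\setu|!=\binom{|\bsm_\setu|}{\bsm_\setu}\prod_{i}m_i!$ and to absorb the multinomial coefficient into a per-coordinate constant $C=C(\alpha,q)$, ending with $R\lesssim\sum_\setu|\setu|^{(2-q)/2}\prod_{i\in\setu}C\,\overline{b}_i^{q}$. That absorption fails: the multinomial coefficient is not a product over $i\in\setu$, and the uniform bound you invoke is $\binom{|\bsm_\setu|}{\bsm_\setu}\le|\setu|^{|\bsm_\setu|}$, which with $|\bsm_\setu|\le\alpha|\setu|$ can be as large as $|\setu|^{\alpha|\setu|}$; after raising to the $q$-th power this, together with $[(|\bsm_\setu|+5)!]^q$, is super-geometric in $|\setu|$ and cannot be bundled into a constant $C$. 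Even the $1/\ell!$ that a careful treatment of $\sum_{|\setu|=\ell}\prod_{i\in\setu}$ would supply compensates only $\ell^{\ell}$, not $\ell^{q\alpha\ell}$ when $q\alpha>1$ (e.g.\ $q=0.4$ gives $\alpha=\lfloor 1/q+1/2\rfloor=3$ and $q\alpha=1.2$), so your series need not converge. The paper (following \cite{KKKNS22}) sidesteps this by merging the sum over $(\setu,\bsm_\setu)$ into a single sum over $\bsm\in\{0{:}\alpha\}^s$ and then injecting $\bsm\mapsto\setv_\bsm$ with $|\setv_\bsm|=|\bsm|$ (not $|\setu|$); the crucial point is that the resulting $1/\ell!$ lives in the same variable $\ell=|\bsm|$ as the factorial, and the ratio test closes since $[(\ell+6)!]^q/([(\ell+5)!]^q(\ell+1))=(\ell+6)^q/(\ell+1)\to0$ for $q<1$. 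Your remark that the shift from $|\bsm_\setu|!$ to $(|\bsm_\setu|+5)!$ is only a polynomial change and hence harmless is the right intuition for why the \cite{KKKNS22} mechanism extends to the present setting --- but you need that re-indexing mechanism, not the multinomial split.
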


\begin{proof}
The proof of this theorem follows very closely to that of \cite[Theorem~4.5]{KKKNS22}.
We seek to choose weights to bound the constant $\calD_s(\lambda)$ independently of dimension. Applying \cite[Lemma~6.2]{KSS12} to \eqref{eq:implied_cons}, we find that the weights given by \eqref{eq:op_weights} minimise $[\calD_s(\lambda)]^{2\lambda}$.

Substituting \eqref{eq:op_weights} into \eqref{eq:implied_cons} and simplifying gives
\begin{align}\label{eq:min_C}
& [\calD_s(\lambda)]^\frac{2\lambda}{1+\lambda} \leq \sum_{\bsm\in\{0:\alpha\}^{s}}\bigg((|\bsm|+5)!\prod_{i=1}^s \beta_i^{m_i}\bigg)^\frac{2\lambda}{1+\lambda},
\end{align}
where we use the  bound $\max(|\setu|,1)\leq [e^{1/e}]^{|\setu|}$ and define 
$\beta_i \coloneq S_{\max}(\alpha)\,[2e^{1/e}\zeta(2\alpha\lambda)]^\frac{1}{2\lambda}\,\overline{b}_i$,
with $S_{\max}(\alpha) \coloneq \max_{1\leq m \leq \alpha} S(\alpha,m)$.

Defining a new sequence $d_j\coloneq \beta_{\lceil j/\alpha\rceil}$ for $j\geq 1$, we can write, for $\bsm\in\{0:\alpha\}^s$,
	\begin{align*}
	\prod_{i=1}^s \beta_i^{m_i} = \prod_{i\in\setv_\bsm} d_i,
	\end{align*}
where  $\setv_\bsm \coloneq \{1,2,\ldots,m_1,\alpha+1,\alpha+2,\ldots,\alpha+m_2\ldots,(s-1)\alpha+1,\ldots,(s-1)\alpha+m_s\}$. The cardinality of $\setv_\bsm$ is $|\setv_\bsm|=\sum_{i=1}^s m_i = |\bsm|$.
Then, following \cite{KKKNS22}, the upper bound  \eqref{eq:min_C} can be further bounded above by 
	\begin{align}
\sum_{\satop{\setv\subset\N}{|\setv|<\infty}}\bigg((|\setv|+5)!\prod_{i\in\setv} d_i\bigg)^\frac{2\lambda}{1+\lambda} \leq 
	\sum_{\ell\geq 0} [(\ell+5)!]^\frac{2\lambda}{1+\lambda}
	\frac{1}{\ell!}\bigg(\sum_{i\geq1} d_i^\frac{2\lambda}{1+\lambda}\bigg)^\ell \label{eq:final_bnd}.
	\end{align}

Now following from Assumption \eqref{asm:bbar_sum}, we know $\sum_{j\geq 1} \overline b_j^q < \infty$, so we choose $\lambda$ such that $\frac{2\lambda}{1+\lambda} = q$ which gives $\lambda = \frac{q}{2-q}$. Then
	\begin{align*}
	\sum_{i\geq1} d_i^\frac{2\lambda}{1+\lambda} 
	= \sum_{i\geq1} d_i^{\,q} 
	=\alpha\sum_{i\geq1} \beta_i^q
	=\alpha\max(1,S_{\max}(\alpha)\,[2e^{1/e}\zeta(2\alpha\lambda)]^\frac{1}{2\lambda})^q\sum_{j\geq1}\overline b_i^q < \infty,
	\end{align*}
provided that $2\alpha\lambda >1$, which can be ensured by choosing $\alpha > 1/q-1/2$. This condition can be satisfied by taking 
$\alpha 
= \lfloor (\frac{1}{q} -\frac{1}{2})+1\rfloor
= \lfloor \frac{1}{q} +\frac{1}{2}\rfloor$.

To show the full series \eqref{eq:final_bnd} converges we perform the ratio test with the terms of the series given by $T_\ell \coloneq [(\ell+5)!]^q\,\frac{1}{\ell!}\big(\sum_{i\geq1} d_i^q\big)^\ell$. This shows that $\calD_s(\lambda)$ can be bounded from above independently of dimension since
	\begin{align*}
	\lim_{\ell\to \infty} \bigg|\frac{T_{\ell+1}}{T_{\ell}}\bigg|  
	= \bigg(\sum_{i\geq1} d_i^q\bigg) \lim_{\ell\to \infty} \frac{(\ell+1+5)^q}{\ell+1}
	&\leq \bigg(\sum_{i\geq1} d_i^q\bigg) \lim_{\ell\to \infty} \frac{(\ell+1)^q+5^q}{\ell+1}\\
	&= \bigg(\sum_{i\geq1} d_i^q\bigg) \lim_{\ell\to \infty} \bigg((\ell+1)^{q-1}+\frac{5^q}{\ell+1}\bigg) 
	=0,
	\end{align*}
where the inequality results from using the fact that $(\sum_{i}a_i)^q \leq \sum_i a_i^q$ for $q\in(0,1]$. 

We can conclude that for the choice of weights \eqref{eq:op_weights}, $\calD_s(\lambda)$ can be bounded independently of~$s$. Equation \eqref{eq:MLKI_final_error} follows by applying the bound on $\calD_s(\lambda)$ to Theorem~\ref{thm:ML_full_error}.
\end{proof}

\begin{remark}\label{rem:double}
The recent paper \cite{KS24} demonstrates that it is possible to achieve double the convergence rate of the $L^2$ kernel approximation error for functions that are sufficiently smooth, by leveraging the orthogonal projection property of the kernel interpolant, using a method reminiscent of the Aubin-Nitsche trick.
The theory from \cite{KS24} can be directly applied to our theoretical results, i.e., one should expect a convergence rate of $\frac{1}{2\lambda}$ rather than the $\frac{1}{4\lambda}$ as implied by \eqref{eq:lattice_KI_error}, since the analytic nature of the solution $u(\bsx,\bsy)$ of \eqref{eq:variational_pde} with respect to $\bsy$ implies that $u(\bsx,\cdot) \in \calH_{\alpha,\bsgamma}(\Omega_s)$ for all $\alpha\geq1$.
However, if we impose restrictions to ensure dimension independence of the constant $\calD_s(\lambda)$, then we cannot simply double the convergence rate in \eqref{eq:MLKI_final_error}.
Instead, the convergence rate for the QMC error determined in Theorem~\ref{thm:weight_select} will remain unmodified, while the choices of $\alpha$ and 
$\lambda$ will change. We now choose 
$\alpha = \lfloor \frac{1}{2q}+\frac{3}{4}\rfloor$ and $\lambda = \frac{2q}{2-q}$, which
 means that $\alpha$ could now be selected to be smaller than before whilst maintaining the 
 same rate of convergence and dimension independence of the constant.
\end{remark}

The main result for total error of approximation using the multilevel kernel approximation method is presented below. 
\begin{theorem}\label{thm:full_result}
Under Assumptions \eqref{asm:a_in_Linf}--\eqref{asm:f} and 
with $\alpha$, $\lambda$ and weights $\gamma_\setu$ chosen as in Theorem~\ref{thm:weight_select}, the total error of the multilevel kernel approximation of $u$ satisfies   
\begin{align*}
\|u-I^{\rm{ML}}_L u\|_{L^2(\Omega \times D)}
&\lesssim 
 \|f\|_{L^2(D)}\bigg(s^{-(\frac{1}{p}-\frac{1}{2})} + h_L^2+
	\sum_{\ell=0}^L [\varphi(N_\ell)]^{-(\frac{1}{2q} - \frac{1}{4})} h_{\ell-1}^2  \bigg),
\end{align*}
where $h_{-1}\coloneq 1$, and the implied constant is independent of the dimension $s$.
\end{theorem}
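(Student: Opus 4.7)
\begin{proofof}{Theorem~\ref{thm:full_result} (plan)}
The approach is straightforward: combine the error decomposition already established with the three previously proven bounds (dimension truncation, finite element, and multilevel kernel). No new technical machinery is required, so this is primarily a bookkeeping exercise.

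First I would start from the decomposition \eqref{eq:total_err}, which writes the total error as the sum of a bias term $\|u-u_L\|_{L^2(\Omega\times D)}$ and the multilevel kernel approximation term $\sum_{\ell=0}^{L}\|(I-I_\ell)(u_\ell-u_{\ell-1})\|_{L^2(\Omega\times D)}$.  Then I would split the bias using the triangle inequality as in \eqref{eq:split}: $\|u-u_L\|_{L^2(\Omega\times D)}\leq \|u-u^s\|_{L^2(\Omega\times D)}+\|u^s-u^s_{h_L}\|_{L^2(\Omega\times D)}$. The first piece is bounded via the dimension truncation estimate \eqref{eq:DT_KKKNS} by a constant multiple of $\|f\|_{V'}\,s^{-(\frac{1}{p}-\frac{1}{2})}$, which (since $\|f\|_{V'}\lesssim \|f\|_{L^2(D)}$ by Poincar\'e and $f\in L^2(D)$ from Assumption~\eqref{asm:f}) is bounded by a constant times $\|f\|_{L^2(D)}\,s^{-(\frac{1}{p}-\frac{1}{2})}$. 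The second piece is bounded pointwise in $\bsy$ using the FE estimate \eqref{eq:FE_KKKNS}, and then integrating over $\Omega$ yields a bound of $h_L^2\,\|f\|_{L^2(D)}$ (uniform in $\bsy$).

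Next, for the multilevel kernel term I would apply Theorem~\ref{thm:weight_select} directly, which, under the prescribed choices of $\alpha$, $\lambda$, and the weight parameters $\gamma_\setu$ given in \eqref{eq:op_weights}, gives
\begin{align*}
\sum_{\ell=0}^L\|(I-I_\ell)(u_\ell-u_{\ell-1})\|_{L^2(\Omega\times D)}
\,\lesssim\, \|f\|_{L^2(D)}\sum_{\ell=0}^L [\varphi(N_\ell)]^{-(\frac{1}{2q}-\frac{1}{4})}h_{\ell-1}^2,
\end{align*}
with $h_{-1}\coloneq 1$ and implied constant independent of $s$.

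Finally, I would assemble the three bounds, factoring out $\|f\|_{L^2(D)}$, to obtain exactly the stated estimate. The main ``obstacle'' is really only cosmetic: one needs to verify that all implied constants appearing in the three building-block estimates can be made independent of $s$ simultaneously. For the dimension truncation and FE bounds this is explicit in \eqref{eq:DT_KKKNS}--\eqref{eq:FE_KKKNS}; for the multilevel kernel term this is precisely what Theorem~\ref{thm:weight_select} guarantees via the choice \eqref{eq:op_weights} and the conditions on $\alpha$ and $\lambda$. Assembling everything then gives the claim.
\end{proofof}
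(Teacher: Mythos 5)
Your proposal is correct and follows exactly the route taken in the paper: plug the dimension truncation bound \eqref{eq:DT_KKKNS}, the FE bound \eqref{eq:FE_KKKNS}, and the multilevel kernel estimate \eqref{eq:MLKI_final_error} (Theorem~\ref{thm:weight_select}) into the decomposition \eqref{eq:total_err} via \eqref{eq:split}, using $\|f\|_{V'}\lesssim\|f\|_{L^2(D)}$ to unify the norm of $f$. Nothing to add.
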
 
\begin{proof}
The result is simply obtained by combining the dimension truncation and FE bounds given in  \eqref{eq:DT_KKKNS} and \eqref{eq:FE_KKKNS}, respectively, with the multilevel kernel approximation error \eqref{eq:MLKI_final_error} to bound the total approximation error decomposition given in \eqref{eq:total_err}. 
\end{proof}

Theorem~\ref{thm:full_result} shows that Assumptions~\eqref{asm:trun_error} and~\eqref{asm:ml_error} of Theorem~\ref{thm:ml-complexity} hold with $\kappa = \frac{1}{p}-\frac{1}{2}$, $\mu = \frac{1}{2q}-\frac{1}{4}$ and $\beta=2$.

\begin{remark} \label{rem:sdpt_wgt}
CBC construction with weights of the form \eqref{eq:op_weights} are too costly to implement. Following~\cite{KKKNS22}, it can be shown that Theorem~\ref{thm:full_result} continues to hold for the  SPOD weights
\begin{align*}%\label{eq:spod_wghts}
	\gamma_\setu \coloneq \sum_{\bsm_\setu \in \{1:\alpha\}^{|\setu|}}
	[(|\bsm_\setu|+5!)]^{\frac{2}{1+\lambda}}
	\prod_{i\in\setu} \bigg(\frac{\overline{b}_i^{m_i}\,S(\alpha,m_i)}{\sqrt{2e^{1/e}\zeta(2\alpha\lambda)}}\bigg)^{\frac{2}{1+\lambda}}.
\end{align*}
However, these SPOD weights still come with a quadratic cost in dimension when evaluating the kernel basis functions to construct the approximation compared to the linear cost of product weights. Instead, the paper \cite{KKS22} proposes \emph{serendipitous product weights}, whereby the order-dependent part of the weights is simply omitted, giving in our case
\begin{align}\label{eq:sdpts_wgt}
	\gamma_\setu \coloneq \prod_{i\in\setu}\Bigg(\sum_{m=1}^\alpha
	\frac{\overline b_i^m S(\alpha,m)}{\sqrt{2e^{1/e}\zeta(2\alpha\lambda)}}\Bigg)^{\frac{2}{1+\lambda}}.
\end{align}
For these weights, the upper bound on the error of the kernel approximation will continue to have the same theoretical rates of convergence seen in Theorem~\ref{thm:full_result}, although the implied constant will no longer be independent of $s$. 

The paper \cite{KKS22} demonstrates that serendipitous product weights offer comparable performance to SPOD weights for easier problems and superior performance in problems where SPOD weights may fail completely. A possible explanation is that the SPOD weights derived from theory may be poorer due to overestimates in the bounds. A second, more practical explanation is that the magnitude of SPOD weights substantially increases as dimension grows which leads to larger, more 
peaked kernel basis functions. The resulting approximations using these basis functions 
tend to also be very peaked, resulting in a less ``smooth" approximation.
It is worth noting that integration problems are more robust to overestimates that may 
affect the quality of weights, as the weights do not directly appear in the computation of 
the integral. This is not true for the kernel approximation problem where the weights appear 
explicitly in the kernel basis functions.
\end{remark}

\section{Implementing the ML kernel approximation}
\label{sec:implementation}
In this section, we outline how to compute efficiently the multilevel approximation for the PDE problem using a matrix-vector representation of \eqref{eq:ml-ker}. Throughout we consider a fixed truncation dimension and
so omit the superscript $s$.

First, we outline how to construct the single-level kernel interpolant for the PDE problem.
As in~\cite{KKKNS22}, the single-level kernel interpolant applied to the FE approximation of the PDE solution is given by \eqref{eq:interp-g} with $g = u_h(\bsx, \cdot)$, for $\bsx \in D$.
Each kernel interpolant coefficient $a_{N,h, k} \in V_h$ is now a FE function, which we 
can expand using the FE basis functions $\{\phi_{h, i}\}_{i = 1}^M$, with $M = \dim(V_h)$, to write
\begin{align}
\label{eq:interp-fe}
I_N u_h(\bsx, \bsy)
\,=\,  \sum_{k = 0}^{N - 1} a_{N, h, k}(\bsx)\, \calK_{\alpha,\bsgamma}(\bst_k, \bsy)
\,=\,  \sum_{i = 1}^M
\sum_{k = 0}^{N - 1} a_{N, h, k, i} \, \calK_{\alpha,\bsgamma}(\bst_k, \bsy)\,\phi_{h, i}(\bsx).
\end{align}
To enforce interpolation for the FE solution, we equate the coefficients of the FE functions $\phi_{h, i}$
in \eqref{eq:fe-expansion} and \eqref{eq:interp-fe} at each lattice point $\bsy = \bst_{k'}$,
leading to the requirement
\begin{align*}
	u_{h, i}(\bst_{k'})=   \sum_{k = 0}^{N - 1} a_{N, h, k, i} \, \calK_{\alpha,\bsgamma}(\bst_k, \bst_{k'})
        \quad \text{for all } k' = 0, 1, \ldots, N - 1,\; i = 1, 2, \ldots, M.
\end{align*}
Thus, the coefficients
$\bsA_{N, h} \coloneqq [a_{N, h, k, i}]_{0\le k\le N-1,\, 1\le i\le M} \in \R^{N \times M}$
are the solution to the matrix equation
\begin{equation*}
%\label{eq:K*A=U-fe}
K_{N, \alpha,\bsgamma} \,\bsA_{N, h} \,=\, \bsU_{\!N, h},
\end{equation*}
where $K_{N, \alpha,\bsgamma} = [\calK_{\alpha,\bsgamma}(\bst_k, \bst_{k'})]_{k, k' = 0}^{N - 1}$, and
$\bsU_{\!N, h} = [u_{h_, i}(\bst_k)]_{0\le k\le N-1,\, 1\le i\le M}$ is an $N \times M$ matrix of the FE nodal values at the lattice points.

Next we present a similar matrix-vector representation for the multilevel kernel approximation \eqref{eq:ml-ker}.
Since we are using an embedded lattice rule, $N_{\ell}$ is a factor of $N_{\ell-1}$
and the point sets across the levels are embedded, with an ordering such that
\begin{align}\label{eq:nest_pts}
\bst_{\ell,k}=\bst_{\ell-1,\,k(N_{\ell-1}/N_{\ell})}
\quad \text{for } \ell =  1, 2, \ldots, L.
\end{align}
It follows that $\bst_{\ell,k}=\bst_{0,\,k(N_{0}/N_{\ell})}$ and the kernel matrices 
for each level $\ell$
are nested with the same structure.

Since the FE spaces are also nested, we can expand the difference on each level $\ell \geq 1$ in the FE basis for $V_\ell= V_{h_\ell}$
\begin{equation}
\label{eq:diff-fe-expansion}
u_\ell(\cdot, \bsy) -  u_{\ell - 1}(\cdot, \bsy)
= \sum_{i = 1}^{M_\ell} \big[u_{\ell, i}(\bsy) - \overline{u}_{\ell - 1, i}(\bsy)\big] \phi_{h_\ell, i}
\in V_\ell,
\end{equation}
where $M_\ell \coloneqq \dim(V_\ell)$ and $[\overline{u}_{\ell - 1, i}(\bsy)]_{i = 1}^{M_\ell}$
are the FE coefficients for $u_{\ell - 1}(\cdot, \bsy)$ after (exact) interpolation
onto $V_\ell$.

Similar to \eqref{eq:interp-fe}, the kernel approximation on level $\ell$ is
\begin{equation}
\label{eq:I_ell-diff}
I_\ell (u_\ell - u_{\ell - 1})(\bsx, \bsy) = \sum_{i = 1}^{M_\ell} \sum_{k = 0}^{N_\ell - 1} 
a_{\ell, k, i} \, \calK_{\alpha,\bsgamma}(\bst_{\ell, k}, \bsy)%
\,\phi_{h_\ell, i}(\bsx),
\end{equation}
and we now choose the coefficients $a_{\ell, k, i} = a_{N_\ell,h_\ell,k,i}$ such that
\eqref{eq:I_ell-diff} interpolates the difference  \eqref{eq:diff-fe-expansion} at each lattice point $\bsy = \bst_{\ell, k}$.
This leads to $L + 1$ matrix equations
\begin{align}
\label{eq:K*A=U-ell}
K_{N_\ell, \alpha, \bsgamma}\,\bsA_{\ell} \,=\, 
\begin{cases}
{\bsU}_{\!0} &\mbox{for }\ell=0, \\
\bsU_{\!\ell} - \overline\bsU_{\!\ell-1}  &\mbox{for }\ell=1,\ldots,L,
\end{cases}
\end{align}
where now each coefficient matrix is $\bsA_{\ell} \coloneqq \bsA_{N_\ell,h_\ell} = [a_{\ell, k, i}] \in \R^{N_\ell \times M_\ell}$,
$\bsU_{\!\ell} \coloneqq \bsU_{\!N_\ell,h_\ell}  = [u_{\ell, i}(\bst_{\ell, k})] \in \R^{N_\ell \times M_\ell}$ is the matrix of FE coefficients at the lattice points for level $\ell$,
and ${\overline\bsU}_{\!\ell-1} = [\overline{u}_{\ell - 1, i}(\bst_{\ell, k})] \in \R^{N_\ell \times M_\ell}$ is the matrix of FE
coefficients for $u_{\ell - 1}(\cdot, \bst_{\ell, k}) \in V_{\ell - 1}$ interpolated onto $V_\ell$ then evaluated at the lattice points for level $\ell$. 

To obtain each coefficient matrix $\bsA_\ell$,
we exploit the circulant structure of kernel matrices to solve the linear systems 
\eqref{eq:K*A=U-ell}
using FFT. As a result, we only require the first column of each $K_{N_\ell, \alpha, \bsgamma}$
and furthermore, since the lattice points are nested as in \eqref{eq:nest_pts}, we obtain this column by taking every $(N_0/N_\ell)$th  entry (starting from the first) of the first column of $K_{N_0, \alpha, \bsgamma}$.
Similarly, since the FE spaces are nested we obtain $\overline{\bsU}_{\ell - 1}$ for $\ell \geq 1$ 
using the FE matrix $\bsU_{\ell - 1}$ from the previous level. Specifically, we take every $(N_{\ell - 1}/N_\ell)$th row of $\bsU_{\ell - 1}$
then interpolate it onto $V_\ell$ to give $\overline{\bsU}_{\ell - 1}$.
This procedure is given in Algorithm~\ref{alg:ML-coeff}.

\begin{algorithm}[!h]
\caption{Constructing the ML approximation}
\label{alg:ML-coeff}
\begin{algorithmic}[1]
\State Compute 1st column of $K_{N_0,\alpha,\bsgamma}$ 
\Comment kernel matrix for entire lattice point set
\State Compute $\bsU_{\!0}$
\Comment FE solutions in $V_0$  at all lattice points
\State Solve $K_{N_0,\alpha,\bsgamma}\,\bsA_0 = \bsU_{\!0}$ using FFT
\Comment solve for coefficients
\For{$\ell=1,\ldots,L$}
\State Compute 1st column of $K_{N_\ell,\alpha,\bsgamma}$
\Comment $(N_0/N_\ell)$th entries of column 1 of $K_{N_0,\alpha,\bsgamma}$ 
\State Compute $\bsU_\ell$
\Comment FE solutions in $V_\ell$ at $N_\ell$ lattice points
\State Interpolate $(N_{\ell - 1}/N_\ell)$th rows of $\bsU_{\ell - 1}$ onto $V_\ell$ to obtain $\overline{U}_{\!\ell - 1}$
%\State Compute $\widehat{\bsU}_{\!\ell} = {\bsU}_{\!\ell} - {\overline\bsU}_{\!\ell-1}$
\State Solve $K_{N_\ell,\alpha,\bsgamma}\,\bsA_\ell = {\bsU}_{\!\ell} - {\overline\bsU}_{\!\ell-1}$ using FFT
\Comment solve for coefficients
\EndFor
\end{algorithmic}
\end{algorithm}
	
On completion of Algorithm~\ref{alg:ML-coeff}, we have a collection of 
coefficient matrices 
$\{\bsA_\ell\}_{\ell=0}^L$ that we can use to compute 
$I_L^\mathrm{ML} u(\bsx^*, \bsy^*)$, using \eqref{eq:ml-ker} and \eqref{eq:I_ell-diff},
to approximate the PDE solution at any
$(\bsx^*,\bsy^*) \in D\times\Omega_s$. 
By using the embedded property of the lattice points and the local support
of the FE basis, we can evaluate $I_L^\mathrm{ML} u(\bsx^*, \bsy^*)$
efficiently with cost $\calO(N_0)$ as we explain below.

We can write \eqref{eq:I_ell-diff} as
\begin{align*}
I_\ell(u_\ell - u_{\ell - 1}) (\bsx^*, \bsy^*)
\,&=\,
\sum_{k = 0}^{N_\ell - 1} \Bigg(\underbrace{\sum_{\substack{i = 1\\\bsx^* \in \supp(\phi_{h_\ell, i})}}^{M_\ell}
a_{\ell, k, i}\, \phi_{h_\ell, i}(\bsx^*)}_{[\bsa_\ell(\bsx^*)]_{k}}\Bigg) \calK_{\alpha,\bsgamma}(\bst_{\ell, k}, \bsy^*)
\\
\,&=\, \sum_{k' = 0}^{N_0 - 1} 
\bbI(k' N_\ell \equiv 0 \,({\rm mod}\,N_0)) \, [\bsa_\ell(\bsx^*)]_{\lfloor k'N_\ell / N_0 \rfloor}\, \calK_{\alpha,\bsgamma}(\bst_{0, k'}, \bsy^*).
\end{align*}
Effectively, for each $\ell$, we compute the sum over $i$ in \eqref{eq:I_ell-diff} by ``interpolating in space''
along the columns of $\bsA_\ell$ to get a kernel coefficient vector at $\bsx^*$, denoted
$\bsa_{\ell}(\bsx^*) \in \R^{N_\ell}$.
Since the FE basis functions are locally supported,
there will be a small number of indices $i$ in \eqref{eq:I_ell-diff}
such that $\phi_{h_\ell, i}(\bsx^*)$ is nonzero. Multiplying the corresponding columns $\bsA_\ell$
by $\phi_{h_\ell, i}(\bsx^*)$ and summing them up then gives
$\bsa_\ell(\bsx^*)$ at a cost of $\calO(N_\ell)$.
In the second equality above we used the embedded property of the lattice points \eqref{eq:nest_pts} to write this as a sum over the entire lattice point set. The indicator function $\bbI$ accounts for the extra terms that were added.

Summing over $\ell = 0 , 1, \ldots, L$ gives a single kernel coefficient vector
$\bsa(\bsx^*) \in \R^{N_0}$,
\[
[\bsa(\bsx^*)]_{k'}
\,=\, [\bsa_{0}(\bsx^*)]_{k'} + \sum_{\ell=1}^L 
\bbI(k' N_\ell \equiv 0 \,({\rm mod}\,N_0)) \, [\bsa_\ell(\bsx^*)]_{\lfloor k'N_\ell/N_0 \rfloor},
\] 
at a cost $\calO(N_0)$. Since the points are embedded $\sum_{\ell = 0}^L N_L = \calO(N_0)$
and hence, the total cost to construct the kernel coefficient vector $\bsa(\bsx^*)$ is $\calO(N_0)$.

Similarly, we store all of the kernel evaluations at $\bsy$ anchored at each of the lattice points
in a single vector $\bskappa(\bsy^*) \coloneqq [\calK_{\alpha,\bsgamma}(\bst_k,\bsy^*)]_{k=0}^{N_0-1}$,
which costs $\calO(s^\rho \alpha^\varsigma N_0)$ where $\rho,\varsigma >0$ depend on the structure of the kernel and are specified in Section~\ref{sec:SLKI}.

Finally, the ML kernel approximation of $u(\bsx^*, \bsy^*)$ is computed by the product
\[
u(\bsx^*,\bsy^*) \,\approx\,
I_{L}^\mathrm{ML} u(\bsx^*, \bsy^*) 
\,=\,
\bsa(\bsx^*)^\top \bskappa(\bsy^*).
\]
Combining the costs above leads to a total cost of evaluating $I_{L}^\mathrm{ML} u(\bsx^*, \bsy^*)$ of $\calO(s^\rho \alpha^\varsigma N_0)$.

\graphicspath{{./plots/}} 

\section{Numerical experiments}\label{sec:numerical}

In this section, we present the results of numerical experiments conducted using the high performance 
computational cluster Katana \cite{Katana}. 

\subsection{Problem specification}

\ourparagraph{The parametric PDE.}
We consider the spatial domain $D=(0,1)^2$ with source term $f(\bsx) = x_2$.
For the parametric coefficient defined in \eqref{eq:coeff}, we choose $\psi_0\equiv 1$ and 
	\begin{align*}%\label{eq:psi}
	\psi_j(\bsx) = \frac{C}{\sqrt6}j^{-\vartheta}\sin(j\pi x_1)\sin(j\pi x_2) 
	\qquad \mbox{for}\quad j\ge 1,
	\end{align*}
where $C>0$ is a scaling parameter to vary the magnitude of the random coefficient, 
and $\vartheta>1$ is a decay parameter determining how quickly the importance of each 
random parameter decreases. The factor $\frac{1}{\sqrt 6}$ is included for easier comparison to the experiments in \cite{KKKNS22} and \cite{KKS22}. For this choice, the bounds on the coefficient assumed in 
\eqref{asm:a_bounds} are given by $a_{\min} \coloneq\! 1 - \frac{C}{\sqrt6}\zeta(\vartheta)$ 
and $a_{\max} \coloneq\! 1 + \frac{C}{\sqrt6}\zeta(\vartheta)$. Thus, from~\eqref{eq:bb_bar} we have	
\begin{align*}
		b_j = \frac{Cj^{-\vartheta}}{a_{\min}\sqrt6} \qquad \mbox{and}
		\qquad \overline{b}_j = \frac{C\pi j^{1-\vartheta}}{a_{\min}\sqrt6} 
		\qquad \mbox{for}\quad j\geq 1.
\end{align*}
Assumptions~\eqref{asm:p_summ} and~\eqref{asm:bbar_sum} hold provided that $1>p>\frac{1}{\vartheta}$ and $1>q>\frac{1}{\vartheta-1}$. Having chosen $p>\frac{1}{\vartheta}$, we can set $q = \frac{p}{1-p}$.

We will present numerical results for two choices of parameters: $C=1.5,\vartheta = 3.6$ and $C=0.2,\vartheta = 1.2$, with the first being easier than the second, and consider the truncated parametric domain $[0,1]^{64}$, i.e., $s=64$.

\ourparagraph{Smoothness, weights, and CBC construction.} 
The smoothness parameter of the kernel is fixed as $\alpha = 1$. We refrain from using higher values of $\alpha$ to avoid stability issues in inaccurate eigenvalue computation via FFT in double precision. Issues in double precision computations arise even for small values of $\alpha$ and relatively small $N$, necessitating the use of arbitrary-precision computing (see e.g., \cite{KMNSS22}). 

Following Remark~\ref{rem:sdpt_wgt}, we use serendipitous weights 
\eqref{eq:sdpts_wgt} for our experiments. We take $\alpha=1$ and $\lambda = \frac{1}{2\alpha}+0.1 = 0.6$ in \eqref{eq:sdpts_wgt}, and use the CBC algorithm described in \cite{KMN23} to construct one embedded lattice rule designed for each parameter set to be used in the range $2^{7} \le N\le 2^{17}$ up to dimension $s=64$.

\ourparagraph{Convergence and cost.} 
According to \eqref{eq:KI_KKKNS} and Theorem~\ref{thm:ML_full_error}, and ignoring the dependence of the constants on $s$ (which is fixed here), we have theoretically $\calO(N_\ell^{-\mu})$ convergence in Assumption~\eqref{asm:ml_error} of Theorem~\ref{thm:ml-complexity} with
$\mu = 1/(4\lambda) \approx 0.417$. As explained in Remark~\ref{rem:double}, since the PDE solution has a higher smoothness order than $\alpha=1$, the theoretical convergence rate doubles to $\mu = 1/(2\lambda) \approx 0.833$.

Recall that the computational cost to achieve an error $\varepsilon>0$ for the single-level approximation
is \eqref{eq:SLcostFinal}, and for the multilevel approximation it is \eqref{eq:complexity}.
Our spatial domain is the unit square, so $d=2$.
The FE convergence rate in Assumption~\eqref{asm:ml_error} of Theorem~\ref{thm:ml-complexity} is determined by Theorem~\ref{thm:ML_full_error} to be $\beta=2$, which also applies for the single-level FE error as indicated by \eqref{eq:FE_KKKNS}. 
Since the serendipitous weights \eqref{eq:sdpts_wgt} are product weights, we have $\rho= 1$. 
With a fixed dimension $s=64$, we may informally set $\kappa = \infty$ and $\tau\approx d$ in Theorem~\ref{thm:ml-complexity}. The third case in \eqref{eq:complexity} is not relevant when using product weights. Hence, we obtain the theoretical cost bounds
\begin{align}\label{eq:num_cost}
\begin{cases}
 \mbox{cost}(I_Nu_h^s)  
 \lesssim \varepsilon^{-\frac{1}{\mu}-1} 
 = \varepsilon^{-2\lambda-1} = \varepsilon^{-2.2}, 
 \\
 \mbox{cost}(I^{\text{ML}}_Lu) 
 \lesssim \varepsilon^{-\max(\frac{1}{\mu},1)}
 = \varepsilon^{-\max(2\lambda,1)}
 = \varepsilon^{-1.2}.
\end{cases}
\end{align}

\subsection{Diagnostic plots}

\ourparagraph{FE.} 
Figure~\ref{fig:FE_plots} plots the computational cost, measured by CPU time in seconds, of assembling and solving the FE linear system required to obtain the PDE solution, and the FE error 
$\| u^{s}_{h^*} - u^{s}_{h}\|_{L^2(\Omega\times D)} \approx 
 \sqrt{\frac{1}{N} \sum_{k=0}^{N-1} \| u^{s}_{h^*}(\cdot,\bst_k) - u^{s}_{h}(\cdot,\bst_k)\|_{L^2(D)}^2}$
for mesh widths $h\in \{2^{-3},\ldots, 2^{-8}\}$ with respect to the reference solution with mesh width $h^* = 2^{-9}$ and $s=64$. We use $N=2^{16}$ lattice points for the integral over $L^2(\Omega)$, and the $L^2(D)$ norm is computed exactly from the FE coefficients.
We observe an error convergence rate of $\calO(h^2)$ and a cost of $\calO(h^{-2})$, demonstrating that indeed $\beta = 2$ and $\tau \approx d = 2$ in Assumptions~\eqref{asm:trun_error} and~\eqref{asm:cost} of Theorem~\ref{thm:ml-complexity}.

\begin{figure}[t]
\begin{center}
 \includegraphics[width=0.48\textwidth]{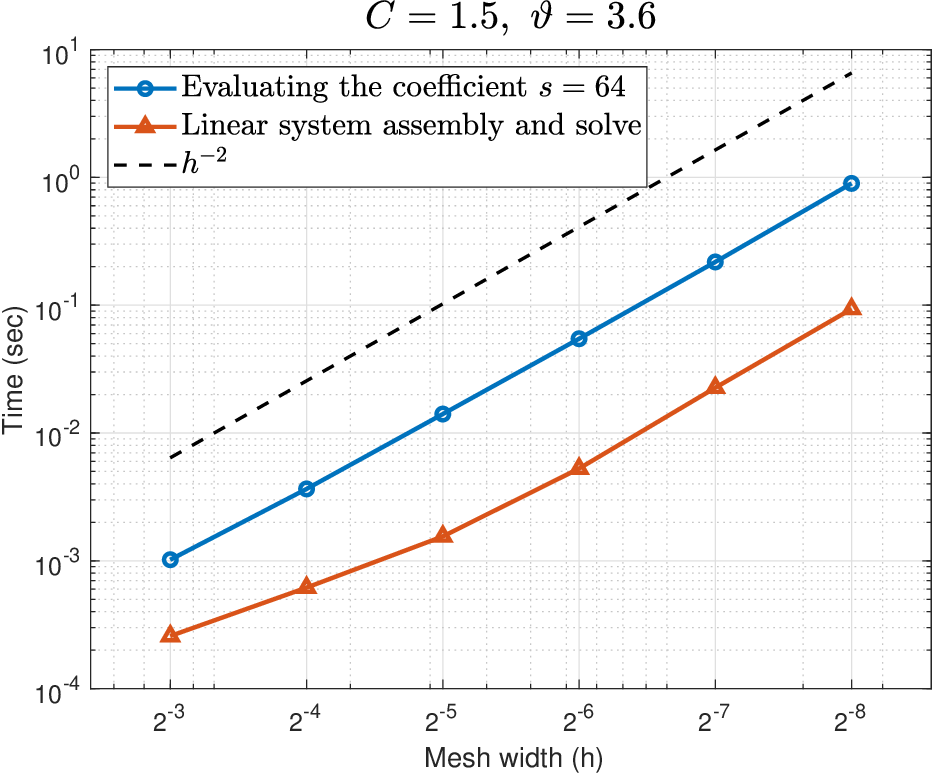} \quad
 \includegraphics[width=0.48\textwidth]{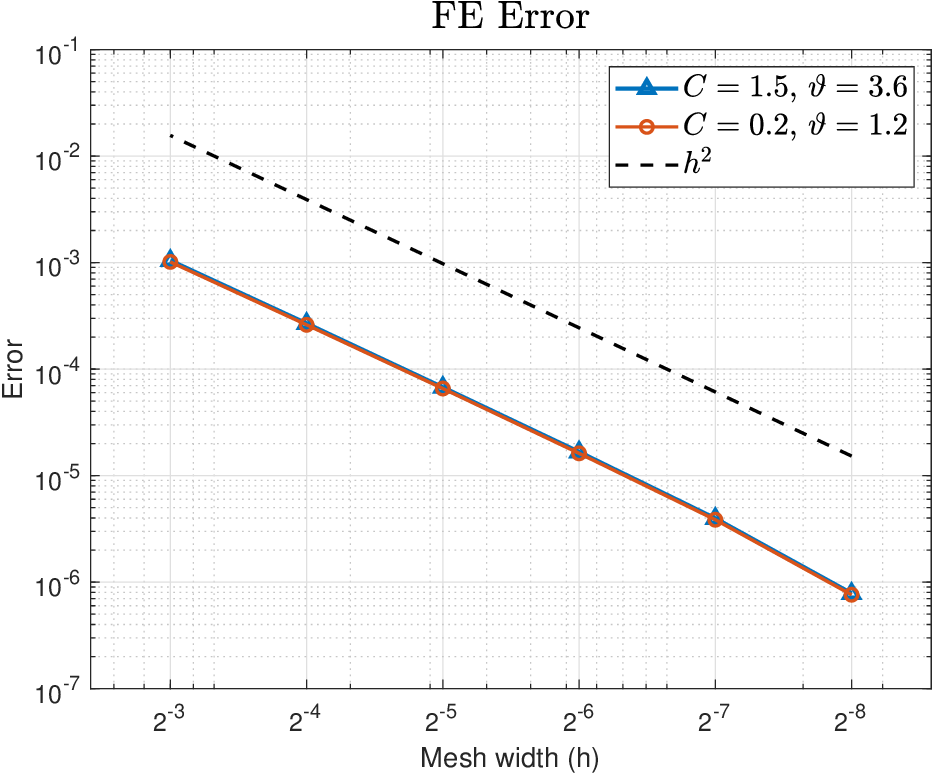} \ 
 \caption{Cost for the FE solve on the left. The FE error $\|u^s_{h^*}-u^s_h\|_{L^2(\Omega\times D)}$ on the right. They demonstrate $\beta = 2$ and $\tau\approx d = 2$ in 
 Assumptions~\eqref{asm:trun_error} and~\eqref{asm:cost} of Theorem~\ref{thm:ml-complexity}.}\label{fig:FE_plots}
\end{center}
\end{figure}

\begin{figure}[t]
\begin{center}
 \includegraphics[width=0.48\textwidth]{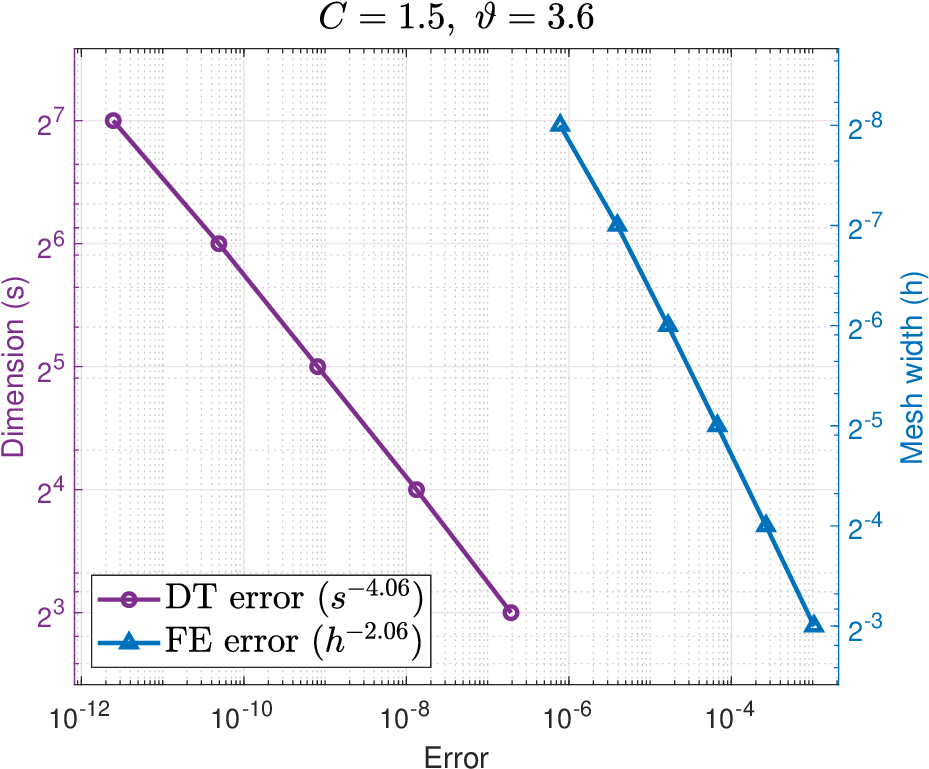}\quad
  \includegraphics[width=0.48\textwidth]{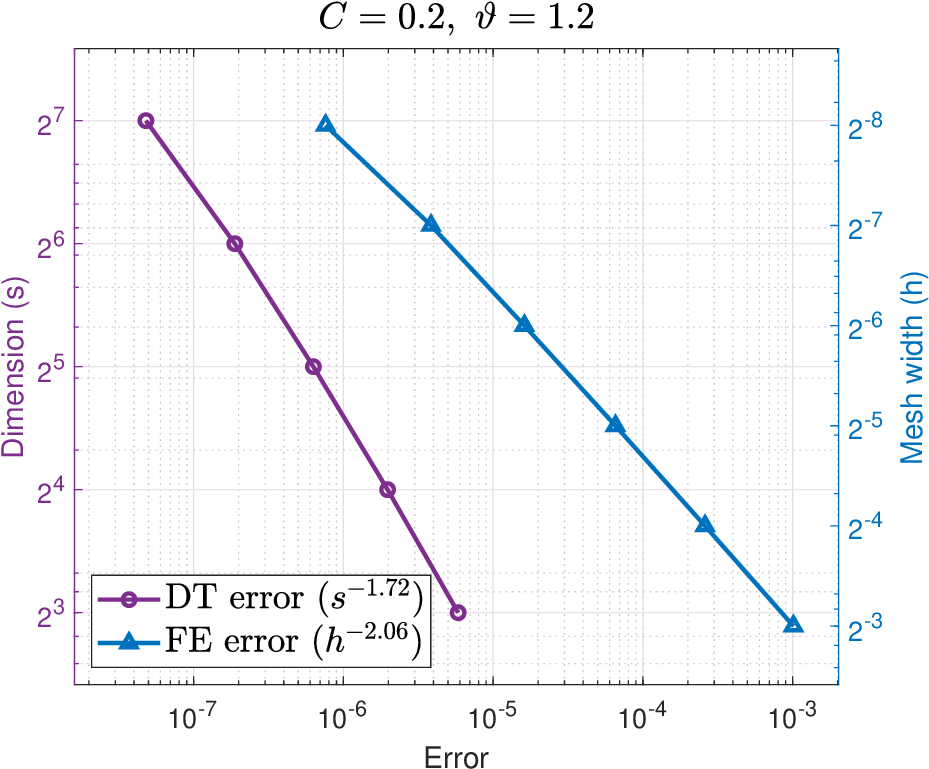}
\caption{Comparing dimension truncation error  with FE error.}\label{fig:dim}
\end{center}
\end{figure}

\ourparagraph{Dimension truncation.}
For $s \in \{2^3,\ldots,2^7\}$ and reference values $s^* = 2^8$ and $h = 2^9$, we compute the error 
$\| u^{s^*}_{h} - u^s_{h}\|_{L^2(\Omega\times D)} \approx 
\sqrt{\frac{1}{N} \sum_{k=0}^{N-1} \| u^{s^*}_{h}(\cdot,\bst_k) - u^s_{h}(\cdot,\bst_k)\|_{L^2(D)}^2}$
using $N = 2^{16}$ lattice points. In Figure~\ref{fig:dim}, on the horizontal axis, we overlay the dimension truncation errors with the FE errors for $h\in \{2^{-3},\ldots, 2^{-8}\}$, with the vertical axis on the left in terms of $s$ (purple circles), and on the right in terms of $h$ (blue triangles). (The relative slope between the overlaid plots is arbitrary.)
We observe $\calO(s^{-\kappa})$ convergence (c.f.~Assumption~\eqref{asm:trun_error} of Theorem~\ref{thm:ml-complexity}) with $\kappa = 4.06$ for the easier problem $C=1.5,\vartheta = 3.6$, and $\kappa=1.72$ for the harder problem $C=0.2,\vartheta = 1.2$. 
For both problems with $s = 2^6 = 64$ we obtain dimension truncation errors much less than $10^{-6}$, i.e., smaller than all the FE errors with $h\in \{2^{-3},\ldots, 2^{-8}\}$ in Figure~\ref{fig:FE_plots}.

\ourparagraph{Single-level kernel interpolation.} Following \cite{KKKNS22}, we use an efficient formula to estimate the single-level kernel interpolation error 
\begin{align*} %\label{eq:eForm1}
	\| (I-I_N) u^s_{h^*} \|_{L^2(\Omega\times D)} 
	& \approx
	 \sqrt{
	\frac{1}{RN}\sum_{r=1}^R\sum_{k=0}^{N-1}
	\|u^s_{h^*}(\cdot,\bsy_r+\bst_k)- I_N u^s_{h^*}(\cdot,\bsy_r+\bst_k)\|^2_{L^2(D)}
	 }, 
	\end{align*}
where $\{\bsy_r\}_{r=1}^R$ is a sequence of Sobol$'$ points, $\{\bst_k\}_{k=0}^{N-1}$ is a sequence of lattice points, with $h^* = 2^{-9}$, $s=64$, $R=10$, and $N\in \{2^4,\ldots,2^{16}\}$. 
In Figure~\ref{fig:match}, on the horizontal axis, we overlay the interpolation errors with the FE errors for $h\in \{2^{-3},\ldots, 2^{-8}\}$, with the vertical axis on the left in terms of $h$ (blue triangles), and on the right in terms of $N$ (red circles). We observe $\calO(N^{-1.5})$ convergence for the easier problem $C=1.5,\vartheta = 3.6$,
and $\calO(N^{-0.99})$ for the harder problem $C=0.2,\vartheta = 1.2$. 

\begin{figure}[t!]
\begin{center}
 \includegraphics[width = 0.48\textwidth]{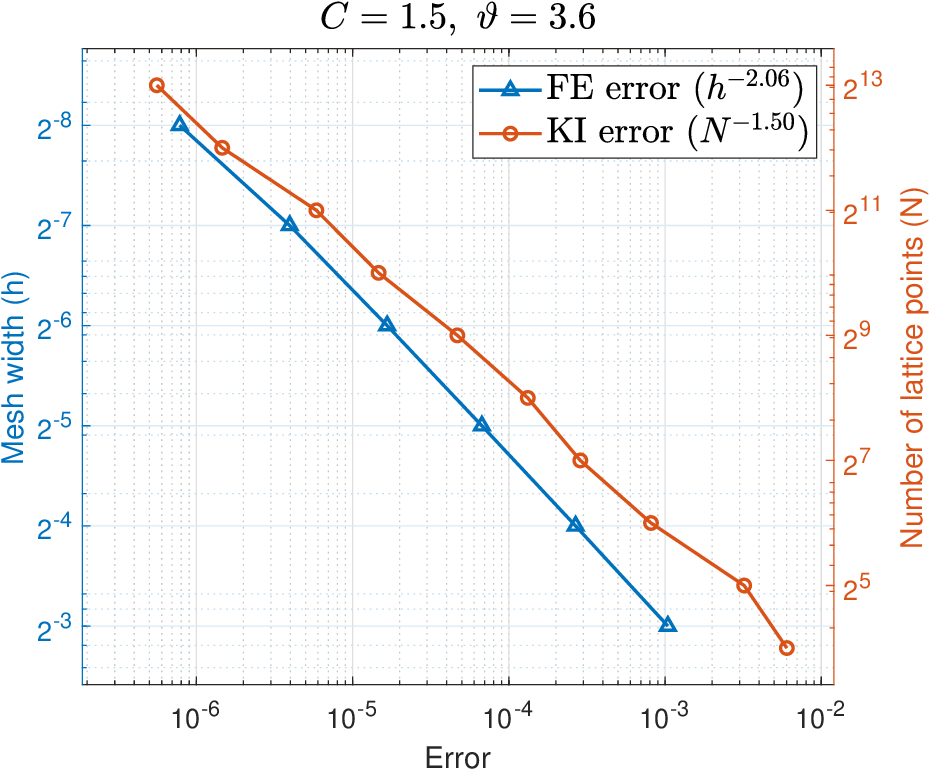} \quad
 \includegraphics[width = 0.48\textwidth]{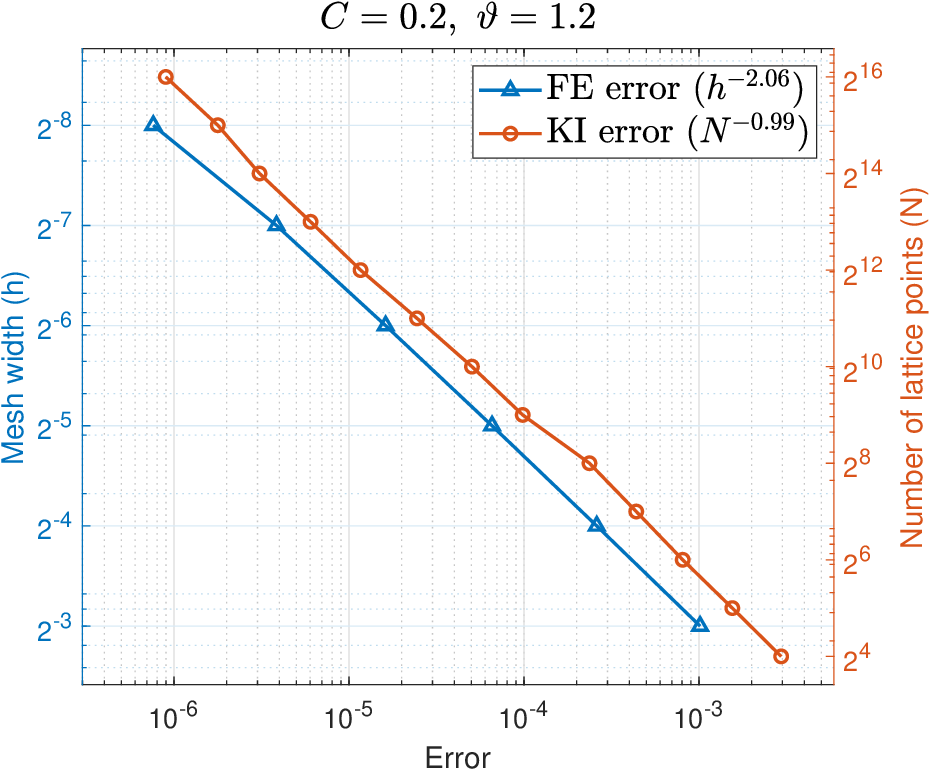}
 \caption{Matching single-level kernel interpolation error and FE error.}\label{fig:match}
\end{center}
\end{figure} 

\renewcommand{\arraystretch}{1.15} 
 \begin{table}[t]
\centering
 \begin{tabular}{|c|cc|cccccc|}
\hline
 &  SL & ${\rm ML}_\ell$      &    0     & 1        &     2    &     3    & 4         &   5      \\
 \hline
 &  $h$ & $h_\ell$ & $2^{-3}$  &  $2^{-4}$ &  $2^{-5}$ &  $2^{-6}$ &  $2^{-7}$  &  $2^{-8}$   \\
 \hline
 $C=1.5,\vartheta=3.6$ & $N$ & $N_{L-\ell}$ &  $2^{6}$ &  $2^{7}$  &  $2^{9}$ &   $2^{10}$  &    $2^{11}$ &$2^{13}$        
 \\
 $C=0.2,\vartheta=1.2$ & $N$ & $N_{L-\ell}$ &   $2^{6}$ &  $2^{8}$  &  $2^{10}$ &   $2^{12}$  & $2^{14}$  & $2^{16}$ 
 \\
 \hline
 \end{tabular}
 \caption{Combinations of FE mesh widths and number of lattice points used for the single-level and multilevel approximations. }
\label{tab:points}
\end{table}

We use Figure 3 to decide how to match the FE mesh width to the number of lattice points so as to give comparable errors. For each $h$ from the FE error data point in Figure~\ref{fig:match}, we trace upward to maintain the same error and look for the nearest interpolation error data point in order to find a matching $N$ with a similar error. (Note that the relative slope between the overlaid plots is arbitrary and does not affect the outcome.) This yields the pairings $(h,N)$ for the single-level kernel interpolant for the two problems in Table~\ref{tab:points}. We will also use Table~\ref{tab:points} to form the pairings
$(h_\ell,N_\ell)$ for our multilevel kernel approximation below. Notice the reverse labeling $N_{L-\ell}$ in Table~\ref{tab:points}, reflecting the fact that the values of $N_\ell$ decrease with increasing $\ell$. For example, when $L=2$, we have $h_0 = 2^{-3}$, $h_1 = 2^{-4}$, $h_2 = 2^{-5}$, and according to Table~\ref{tab:points} for the case $C=1.5, \vartheta=3.6$, we take $N_0 = 2^9$, $N_1 = 2^7$, $N_2 = 2^6$.

\begin{figure}[t!]
\begin{center}
 \includegraphics[width = 0.48\textwidth]{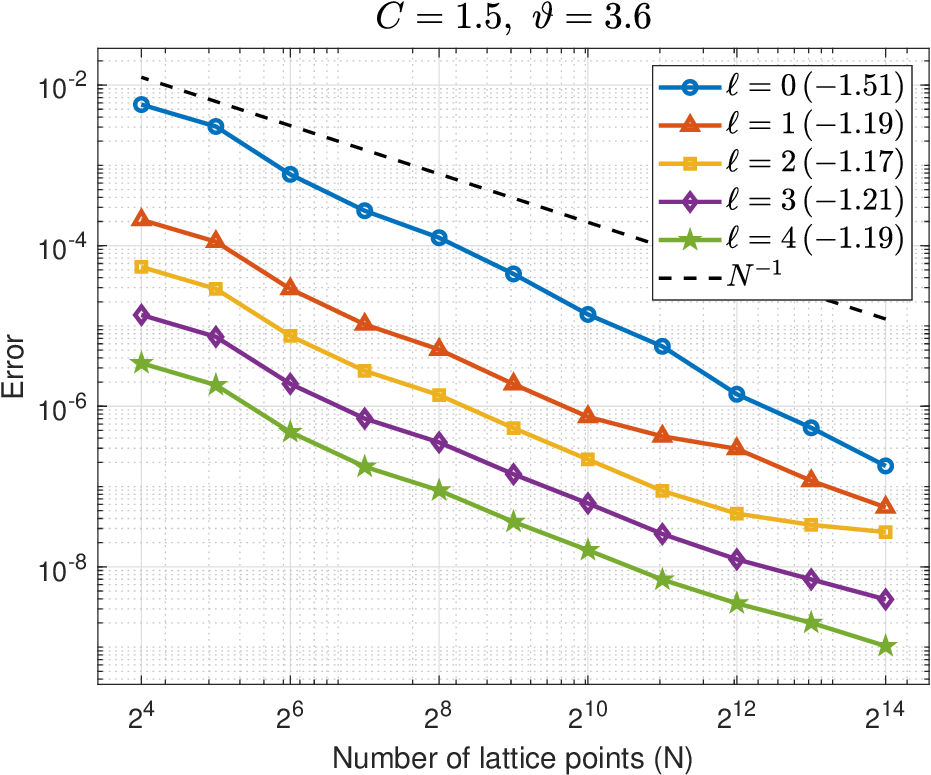} \quad
 \includegraphics[width = 0.48\textwidth]{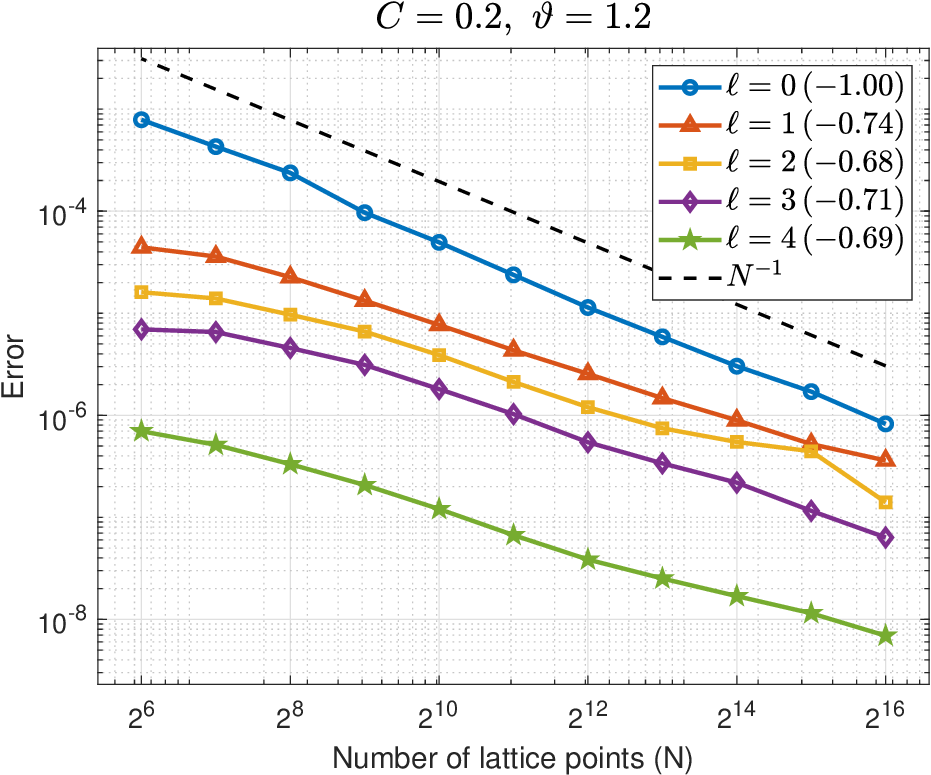} \\
 \vspace{0.5cm}
 \includegraphics[width = 0.48\textwidth]{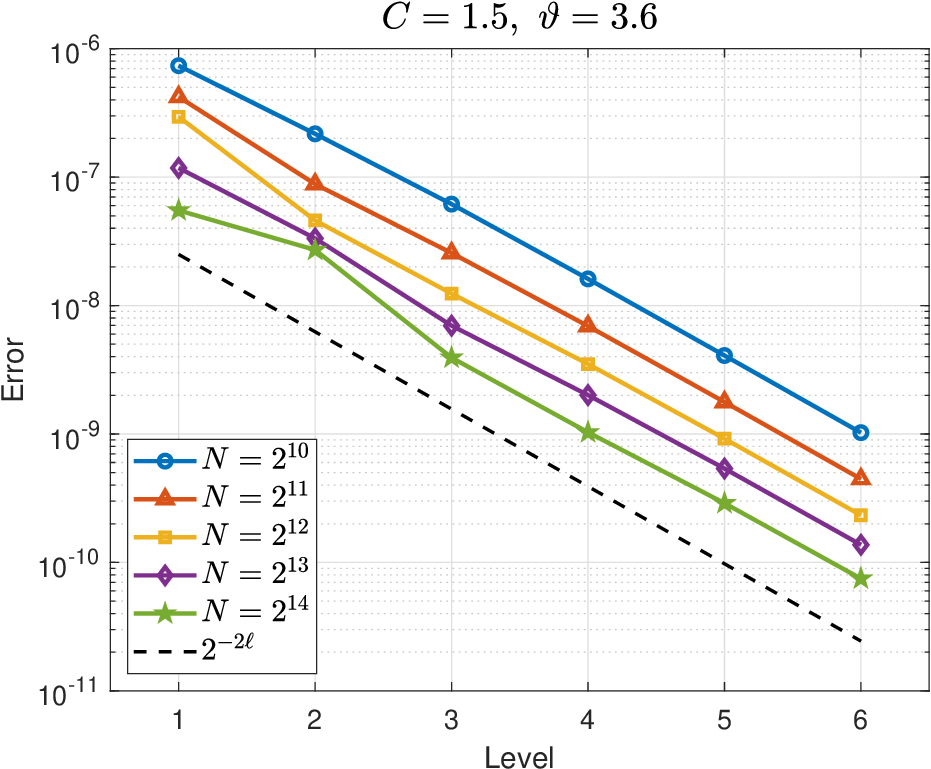} \quad
 \includegraphics[width = 0.48\textwidth]{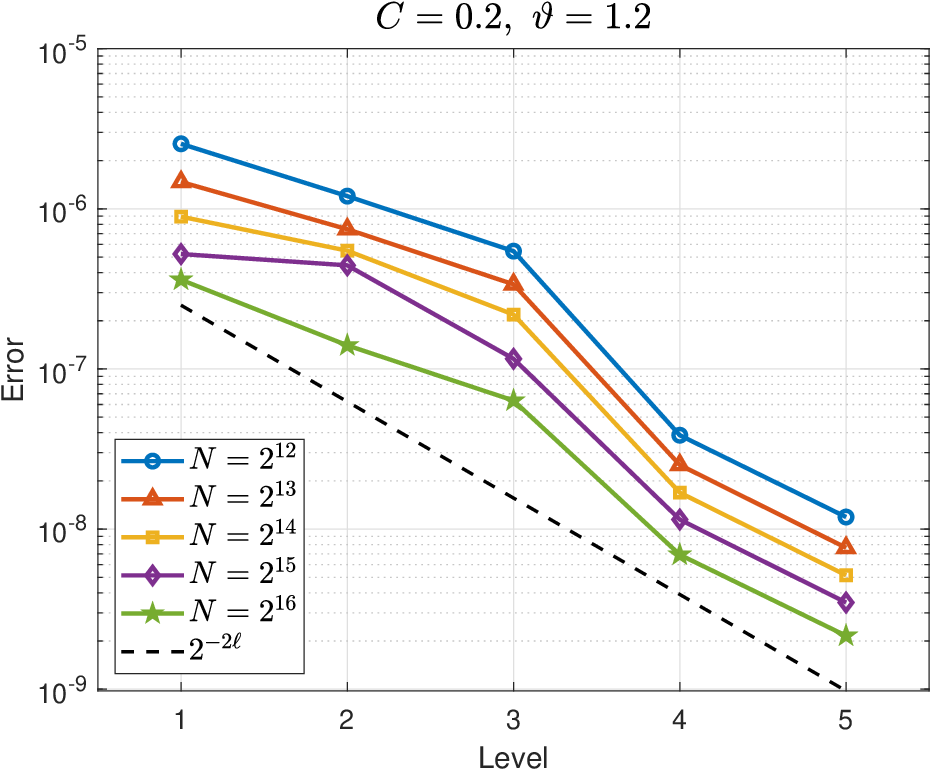}
\caption{Estimates of $\|(I-I_N)(u_\ell - u_{\ell-1})\|_{L^2(\Omega\times D)}$ with 
varying $N$ and $\ell$.}
 \label{fig:level_plot4}
\end{center}
\end{figure}

\ourparagraph{Interpolation error of the FE difference.}
Figure~\ref{fig:level_plot4} plots the estimates of 
\begin{align*} 
 &\|(I-I_N)(u_\ell - u_{\ell-1})\|_{L^2(\Omega\times D)} \nonumber\\
 &\approx
	 \sqrt{
	\frac{1}{RN}\sum_{r=1}^R\sum_{k=0}^{N-1}
	\| (u^s_{h_\ell}-u^s_{h_{\ell-1}})(\cdot,\bsy_r+\bst_k)
	  - I_N (u^s_{h_\ell}-u^s_{h_{\ell-1}})(\cdot,\bsy_r+\bst_k)\|^2_{L^2(D)}
	 }, 
\end{align*}
for both sets of parameters, with $s = 64$, $R=10$, $h_\ell = 2^{-\ell-3}$, and varying $N$ and~$\ell$.

The top two plots in Figure~\ref{fig:level_plot4} show that $\|(I-I_N)(u_\ell - u_{\ell-1})\|_{L^2(\Omega\times D)}$ decreases with increasing $N$ for each $\ell\in \{0,\ldots,4\}$. The case $\ell=0$ is exactly the single-level interpolation error $\|(I-I_N)u^s_{h_0}\|_{L^2(\Omega\times D)}$, and we observe a faster rate of convergence compared to the interpolation error of the FE differences for $\ell\in\{1,\ldots,4\}$. For the easier problem $C = 1.5, \vartheta = 3.6$ on the left, we observe $\calO(N^{-1.51})$ for $\ell = 0$ and $\calO(N^{-1.19})$ on average for the other $\ell$. For the harder problem $C = 0.2, \vartheta = 1.2$ on the right, we observe $\calO(N^{-1.00})$ for $\ell=0$ and $\calO(N^{-0.71})$ on average for the other $\ell$. (The rates for $\ell=0$ are different to those observed in Figure~\ref{fig:match} because we have $h_0 = 2^{-3}$ here while $h^* = 2^{-9}$ in Figure~\ref{fig:match}.) These observed rates correspond to the value of $\mu$ in Assumption~\eqref{asm:ml_error} of Theorem~\ref{thm:ML_full_error}.

The bottom plots in Figure~\ref{fig:level_plot4} show that $\|(I-I_N)(u_\ell - u_{\ell-1})\|_{L^2(\Omega\times D)}$ decreases with increasing level $\ell$ for a range of values of $N$. (Larger values of $N$ are used for the harder problem $C = 0.2, \vartheta = 1.2$ on the right.)
This illustrates how the FE error changes on each level. We observe an error reduction of roughly $\calO(h_\ell^2)$, thus verifying that $\beta = 2$ in Assumption~\eqref{asm:ml_error} of Theorem~\ref{thm:ML_full_error}.

\subsection{Multilevel results}
 
Figure~\ref{fig:MLSL4} plots the computational cost, measured by CPU time in seconds, of the single-level and multilevel approximations against their respective errors for the two parameter sets. The errors are estimated by
\begin{align*} 
	\| (I-A) u^s_{h^*} \|_{L^2(\Omega\times D)} 
	& \approx
	 \sqrt{
	\frac{1}{RN^*}\sum_{r=1}^R\sum_{k=0}^{N^*-1}
	\|u^s_{h^*}(\cdot,\bsy_r+\bst_k)- A u^s_{h^*}(\cdot,\bsy_r+\bst_k)\|^2_{L^2(D)}
	 }, 
\end{align*}
where $Au^s_{h^*}$ denotes either a single-level or multilevel approximation of $u^s_{h^*}$,
with $s = 64$, $R=10$, $h^* = 2^{-9}$, and $N^*$ is the maximum number of lattice points used to construct the approximation (i.e., $N^* = N_0$ for a multilevel approximation).

\begin{figure}[t!]
\begin{center}
 \includegraphics[width = 0.48\textwidth]{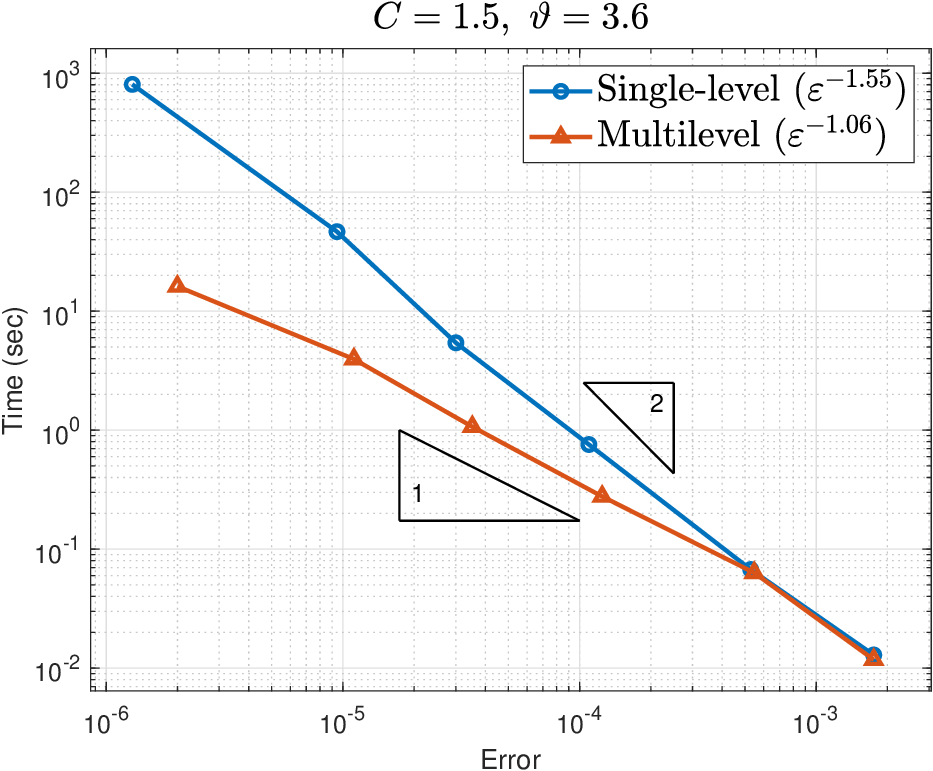} \quad
 \includegraphics[width = 0.48\textwidth]{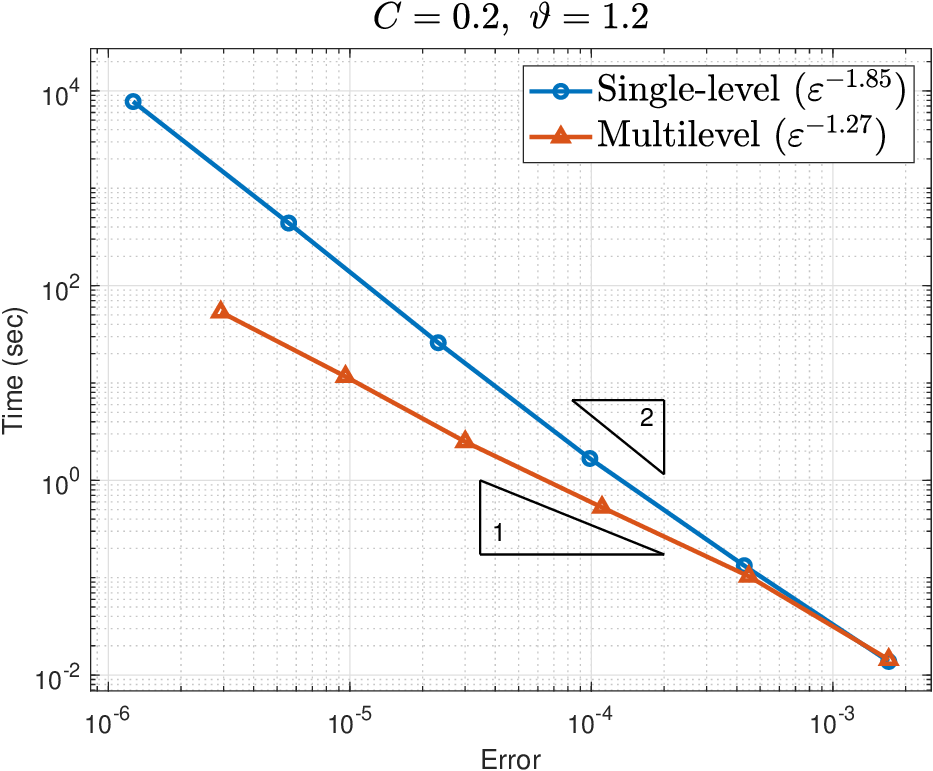}
 \caption{Computation time for the single-level and multilevel approximations against their errors.}\label{fig:MLSL4}
\end{center}
\end{figure} 

Starting from bottom right to top left within the plots in Figure~\ref{fig:MLSL4}, each data point of the single-level result corresponds to a decreasing FE mesh width $h\in \{2^{-3},\ldots,2^{-8}\}$ with corresponding increasing number of lattice points $N$ from Table~\ref{tab:points}, while each data point of the multilevel result corresponds to an increasing total number of levels $L\in\{0,\ldots,5\}$ with the corresponding choices for $\{(h_\ell,N_\ell) : \ell=0,\ldots,L\}$ taken from Table~\ref{tab:points}.
The $\ell=0$ data point for the multilevel error coincides with the single-level error as expected.

Recall from \eqref{eq:num_cost} that the theoretical costs for the single-level and multilevel approximations to achieve an error $\varepsilon>0$ are $\calO(\varepsilon^{-2.2})$ and $\calO(\varepsilon^{-1.2})$, respectively. The triangles in the plots show gradients 1 and 2 as denoted.
The observed costs shown in the legends of Figure~\ref{fig:MLSL4} are mostly lower (better) than the theoretical costs. It may be that the theoretical rate of convergence $\mu$ used in estimating the cost in \eqref{eq:num_cost} is overly pessimistic. We can instead estimate the cost using the observed rates for $\mu$ from Figure~\ref{fig:level_plot4}. 

For the easier problem $C=1.5, \vartheta = 3.6$ on the left of Figure~\ref{fig:level_plot4}, we observe $\mu = 1.51$ and $\mu=1.19$ (average) for single-level and multilevel errors, respectively, giving the expected costs $\calO(\varepsilon^{-1.66})$ and $\calO(\varepsilon^{-1})$, respectively. These are closer to the corresponding observed costs $\calO(\varepsilon^{-1.55})$ and $\calO(\varepsilon^{-1.06})$ on the left of Figure~\ref{fig:MLSL4}.

For the harder problem $C=0.2, \vartheta = 1.2$ on the right of Figure~\ref{fig:level_plot4}, we observe $\mu = 1.00$ and $\mu=0.71$ (average) for single-level and multilevel errors, respectively, giving the expected costs $\calO(\varepsilon^{-2})$ and $\calO(\varepsilon^{-1.41})$, respectively. These again are closer to the corresponding observed costs $\calO(\varepsilon^{-1.85})$ and $\calO(\varepsilon^{-1.27})$ on the right of Figure~\ref{fig:MLSL4}.
 
These observations have been replicated for several other parameter sets, which are omitted for brevity.

\section{Conclusion}

This paper introduces a multilevel kernel method for approximating solutions to PDEs with periodic 
coefficients over the parametric domain. A theoretical framework is developed with full detail,
including $L^2$ error estimates for the multilevel method and a comprehensive cost comparison between the
single-level and multilevel approaches. The construction of the multilevel approximation is also outlined and is supported by numerical experiments demonstrating the advantages of the multilevel approximation for different choices of parameters with varying levels of difficulty.

When applying multilevel methods to integration, randomisation can be used to compute an unbiased mean-square error estimate, and the number of levels and/or samples per level can be adaptively adjusted on the run. In contrast, a practical challenge in applying the multilevel kernel approximation is that there is no efficient adaptive multilevel approximation strategy. This is because the error cannot be effectively evaluated during the implementation, since any estimation must be done with respect to some reference solution. This requires either solving the PDE again at several sample points or constructing an expensive single-level proxy of the true solution, defeating the purpose of applying multilevel approximation. We leave investigations into adaptive multilevel kernel approximation for future research.

\bibliographystyle{plain}

{
	
}	
	
\begin{appendices}

\renewcommand{\thesection}{\Roman{section}}
\renewcommand{\theequation}{\Roman{section}.\arabic{equation}}

\section{Combinatorial identities and proofs}
This appendix provides some important combinatorial results that are required in the proofs of several regularity theorems. 
First, we provide an identity to simplify sums involving Stirling numbers of the second kind \eqref{eq:stirling_def} from \cite{QG15},
	\begin{align}\label{eq:stirling_id}
	\sum_{k=1}^{m-\ell}{m\choose k}S(m-k,\ell) = (\ell+1)S(m,\ell +1)\quad \text{for } \ell<m.
	\end{align}

\begin{lemma}\label{lem:d_recursive_id}
Let $c>0$ and let $\bsb = (b_j)_{j\geq 1}$, $(\bbA_{\bsnu})_{\bsnu\in\calF}$ and $(\bbB_{\bsnu})_{\bsnu\in\calF}$  be sequences of non-negative real numbers that satisfy the recurrence
	\begin{align}\label{eq:recurs}
	\bbA_{\bsnu} = \sum_{j\geq1}\sum_{k=1}^{\nu_j} {\nu_j\choose k}c^k \,b_j \, \bbA_{\bsnu-k\bse_j} + \bbB_\bsnu
	\quad \text{for all } \bsnu \in \calF \mbox{ including }\bsnu = \bszero,
  	\end{align}
  where $\bse_j$ is the multi-index whose the $j$th component is 1 and all other components are 0.
Then
	\begin{align}\label{eq:closed-form}
	\bbA_{\bsnu} = \sum_{\bsm \leq \bsnu}c^{|\bsm|}{\bsnu \choose \bsm} \bbB_{\bsnu-\bsm}
	 \sum_{\bsl \leq \bsm} |\bsl|!\, \bsb^{\bsl}  \prod_{i\geq1} S(m_i,\ell_i).
	\end{align}
	where $S(m_i,\ell_i)$ for $i\geq 1$ are Stirling numbers of the second kind given by \eqref{eq:stirling_def}.
	The result also holds with both equalities replaced by inequalities $\leq$.
\end{lemma}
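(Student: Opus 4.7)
The plan is to establish \eqref{eq:closed-form} by induction on $|\bsnu|$. For the base case $\bsnu = \bszero$, the double sum in \eqref{eq:recurs} is empty, so $\bbA_{\bszero} = \bbB_{\bszero}$; on the right-hand side of \eqref{eq:closed-form} only $\bsm = \bszero$ and $\bsl = \bszero$ contribute, yielding $\bbB_{\bszero}$ (using $S(0,0) = 1$). For the inductive step, suppose \eqref{eq:closed-form} holds for all $\bsnu'$ with $|\bsnu'| < |\bsnu|$.

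I would substitute the inductive hypothesis for $\bbA_{\bsnu - k\bse_j}$ back into \eqref{eq:recurs} and then reorganise the resulting multi-sum via the change of variables $\bsm = \bsm' + k\bse_j$, noting that $\bbB_{\bsnu - k\bse_j - \bsm'} = \bbB_{\bsnu - \bsm}$ and $c^{|\bsm'|} c^k = c^{|\bsm|}$. The binomial coefficients combine via
\[
\binom{\nu_j}{k}\binom{\bsnu - k\bse_j}{\bsm - k\bse_j} \,=\, \binom{\bsnu}{\bsm}\binom{m_j}{k},
\]
which is verified by direct factorial computation. After this step the expression becomes
\[
\bbB_{\bsnu} + \sum_{\bszero \neq \bsm \leq \bsnu} c^{|\bsm|}\binom{\bsnu}{\bsm}\bbB_{\bsnu - \bsm} \sum_{j \in \supp(\bsm)} b_j \sum_{k=1}^{m_j}\binom{m_j}{k} \sum_{\bsl' \leq \bsm - k\bse_j} |\bsl'|!\,\bsb^{\bsl'}\prod_{i \geq 1} S((\bsm - k\bse_j)_i, \ell'_i),
\]
so it suffices to show that the triple inner sum over $(j, k, \bsl')$ equals $\sum_{\bsl \leq \bsm} |\bsl|!\,\bsb^{\bsl}\prod_i S(m_i, \ell_i)$.

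A second change of variables $\bsl = \bsl' + \bse_j$ (absorbing $b_j$ into $\bsb^{\bsl}$ and turning $|\bsl'|!$ into $(|\bsl|-1)!$) transforms the constraint $\bsl' \leq \bsm - k\bse_j$ into $\ell_j \geq 1$, $\bsl \leq \bsm$, and $k \leq m_j - \ell_j + 1$. After pulling out factors independent of $k$, the inner sum becomes
\[
\sum_{j: m_j \geq 1}\sum_{\bsl: \ell_j \geq 1,\, \bsl \leq \bsm} (|\bsl|-1)!\,\bsb^{\bsl}\prod_{i \neq j} S(m_i, \ell_i) \sum_{k=1}^{m_j - \ell_j + 1} \binom{m_j}{k} S(m_j - k, \ell_j - 1).
\]

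The main obstacle is evaluating the innermost sum over $k$, and this is precisely where the Stirling identity \eqref{eq:stirling_id} enters: applied with $m \to m_j$ and $\ell \to \ell_j - 1$, it yields $\ell_j\, S(m_j, \ell_j)$. Exchanging the order of summation and using $\sum_{j \in \supp(\bsm)} \ell_j = |\bsl|$ (valid because $\supp(\bsl) \subseteq \supp(\bsm)$) then converts $(|\bsl|-1)!\,|\bsl|$ into $|\bsl|!$, giving exactly $\sum_{\bsl \leq \bsm} |\bsl|!\,\bsb^{\bsl}\prod_i S(m_i, \ell_i)$ as required (terms with $\ell_j = 0$ for some $j \in \supp(\bsm)$ vanish by $S(m_j, 0) = 0$ and can be freely included or excluded). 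The inequality version of the claim follows immediately by the same manipulations, since every quantity involved is non-negative and monotonicity is preserved at each step.
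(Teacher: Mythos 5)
Your proposal is correct and follows essentially the same strategy as the paper: induction on $|\bsnu|$, substituting the inductive hypothesis, combining binomial coefficients via $\binom{\nu_j}{k}\binom{\nu_j-k}{m_j-k}=\binom{\nu_j}{m_j}\binom{m_j}{k}$, and applying the Stirling identity \eqref{eq:stirling_id} after exchanging the order of summation. The only difference is presentational: you perform the changes of variables $\bsm=\bsm'+k\bse_j$ and $\bsl=\bsl'+\bse_j$ globally at the outset, whereas the paper isolates a single coordinate $j$ using primed notation and carries out the equivalent re-indexing step by step inside the intermediate quantities $\Phi_j$, $\Theta_1$, $\Theta_2$.
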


\begin{proof}
The statement is proved by using  induction   on $|\bsnu|$. The statement holds trivially for $\bsnu=\bszero$ since $S(0,0)=1$. Assume the statement \eqref{eq:closed-form} holds for all multi-indices of order less than $|\bsnu|$, i.e., if $1\leq k \leq \nu_j$ for some $j\geq 1$, then 
	\begin{align}\label{eq:induct_hyp}
	\bbA_{\bsnu-k\bse_j} = \sum_{\bsm \leq \bsnu-k\bse_j}c^{|\bsm|}{\bsnu-k\bse_j\choose \bsm} 
	\bbB_{\bsnu-k\bse_j-\bsm}
	 \sum_{\bsl \leq \bsm} |\bsl|! \,\bsb^{\bsl}  \prod_{i\geq1} S(m_i,\ell_i).
	\end{align}

We now prove the statement for indices of order $|\bsnu|$. Substituting the induction hypothesis  \eqref{eq:induct_hyp} into \eqref{eq:recurs}, we have
	\begin{align}\label{eq:rtp_induction}
	\bbA_{\bsnu} = \bbB_\bsnu +\sum_{j\geq1}\Phi_j,
	\end{align}
	where
	\begin{align*}
	\Phi_j \coloneq \sum_{k=1}^{\nu_j} {\nu_j\choose k}c^k \,b_j 
 \sum_{\bsm \leq \bsnu-k\bse_j}c^{|\bsm|}{\bsnu-k\bse_j\choose \bsm} \bbB_{\bsnu-k\bse_j-\bsm}
	 \sum_{\bsl \leq \bsm} |\bsl|! \,\bsb^{\bsl}  \prod_{i\geq1} S(m_i,\ell_i).
	\end{align*}

We use a dash to denote multi-indices with the $j$th component removed, for example, $\bsnu'~=~(\nu_1,\ldots,\nu_{j-1},\nu_{j+1},\ldots)$ and
adopt the notation $\bbB_{\bsnu} = \bbB_{\bsnu',\nu_j} $. Then
	\begin{align}\label{eq:full_sum}
	\Phi_j=&\sum_{k=1}^{\nu_j} {\nu_j\choose k}c^k \,b_j \,
	 \Bigg[\sum_{\bsm' \leq \bsnu'}\sum_{m_j=0}^{\nu_j-k}c^{|\bsm'|+m_j}{\bsnu'\choose \bsm'} {\nu_j-k\choose m_j} \bbB_{\bsnu'-\bsm',\nu_j-k-m_j} \notag\\
	 &\hspace{4cm} \times\sum_{\bsl' \leq \bsm'}\sum_{\ell_j=0}^{m_j} (|\bsl'|+\ell_j)!\, \bsb'^{\bsl'}\, b_j^{\ell_j}  S(m_j,\ell_j) \prod_{\satop{i\geq1}{i\neq j}} S(m_i,\ell_i)\Bigg]\notag\\
	 =&\sum_{\bsm' \leq \bsnu'}c^{|\bsm'|}{\bsnu'\choose \bsm'} \sum_{\bsl' \leq \bsm'}\bsb'^{\bsl'}\, \bigg(\prod_{\satop{i\geq1}{i\neq j}} S(m_i,\ell_i)\bigg)\,\bbB_{\bsnu'-\bsm',\nu_j-k-m_j}\notag\\
	 &\hspace{2cm}	\times \underbrace{\sum_{k=1}^{\nu_j} 
	 \sum_{m_j=0}^{\nu_j-k}c^{m_j+k}{\nu_j\choose k} {\nu_j-k\choose m_j} \,b_j 
	 \sum_{\ell_j=0}^{m_j} (|\bsl'|+\ell_j)!\,  b_j^{\ell_j}  S(m_j,\ell_j)}_{\eqcolon\, \Theta_1},
	 \end{align}
where we have rearranged the terms and swapped the order of the sums.

Recognising that  $${\nu_j\choose k} {\nu_j-k\choose m_j} ={\nu_j\choose m_j+k} {m_j+k\choose k},$$ 
we have
	 \begin{align}
	 \Theta_1 &=\sum_{k=1}^{\nu_j} 
	 \sum_{m_j=0}^{\nu_j-k}\!\!c^{m_j+k}{\nu_j\choose m_j+k}\! {m_j+k\choose k} \bbB_{\bsnu'-\bsm',\nu_j-(k+m_j)}
	 \!\!\sum_{\ell_j=0}^{m_j} (|\bsl'|+\ell_j)!\,  b_j^{\ell_j+1}  S(m_j,\ell_j)\notag\\
	 &= \sum_{k=1}^{\nu_j} 
	 \sum_{m_j=k}^{\nu_j}c^{m_j}{\nu_j\choose m_j} {m_j\choose k} \bbB_{\bsnu'-\bsm',\nu_j-m_j}
	 \sum_{\ell_j=0}^{m_j-k} (|\bsl'|+\ell_j)! \, b_j^{\ell_j+1}  S(m_j-k,\ell_j)\notag\\
	 &= \sum_{m_j=1}^{\nu_j} c^{m_j}{\nu_j\choose m_j} \bbB_{\bsnu'-\bsm',\nu_j-m_j}
	 \underbrace{\sum_{k=1}^{m_j} {m_j\choose k}  \sum_{\ell_j=0}^{m_j-k} (|\bsl'|+\ell_j)!\,  b_j^{\ell_j+1}  S(m_j-k,\ell_j)}_{\eqcolon \, \Theta_2}
	 \label{eq:mkl_sum} ,	 
	 \end{align}
where we get to the second line by relabelling the summation indices of the second sum from $m_j=0,\ldots,\nu_j-k$ to $m_j=k,\ldots,\nu_j$ and to the third line by swapping the order of sums indexed by $k$ and $m_j$. Swapping the order of the summations indexed by $k$ and $\ell_j$ in $\Theta_2$ gives
	\begin{align}
\Theta_2 &=\sum_{\ell_j=0}^{m_j-1}(|\bsl'|+\ell_j)!\,b_j^{\ell_j+1}\sum_{k=1}^{m_j-\ell_j}{m_j\choose k}S(m_j-k,\ell_j) \notag\\
	&=\sum_{\ell_j=0}^{m_j-1}(|\bsl'|+\ell_j)!\,b_j^{\ell_j+1}(\ell_j+1)\,S(m_j,\ell_j+1)\notag\\
	&=\sum_{\ell_j=1}^{m_j}(|\bsl'|+\ell_j -1)!\,\ell_j\,b_j^{\ell_j}S(m_j,\ell_j)
	=\sum_{\satop{\ell_j=0}{|\bsl'|+\ell_j\neq 0}}^{m_j}(|\bsl'|+\ell_j -1)!\,\ell_j\,b_j^{\ell_j}S(m_j,\ell_j), \label{eq:new_S2}
	\end{align}
where the second equality is obtained using \eqref{eq:stirling_id} and the third equality is from a simple re-indexing of the sum. To reach the final line, we can add the terms corresponding to $\ell_j=0$ into the sum so that the sum begins from $\ell_j=0$ because these terms are all equal to 0 due to the presence of an $\ell_j$ factor in the terms of the series, provided that we also introduce the condition $|\bsl'|+\ell_j\neq 0$ to ensure the factorial term is defined. 
Substituting \eqref{eq:new_S2} into \eqref{eq:mkl_sum} gives
	\begin{align*}
	\Theta_1 
	 &= \sum_{m_j=0}^{\nu_j} c^{m_j}{\nu_j\choose m_j}
	 \bbB_{\bsnu'-\bsm',\nu_j-m_j}
	 \sum_{\satop{\ell_j=0}{|\bsl'|+\ell_j\neq 0}}^{m_j}(|\bsl'|+\ell_j -1)!\,\ell_j\,b_j^{\ell_j}S(m_j,\ell_j),
	 \end{align*}
where we have added the terms corresponding to $m_j=0$ into the summation over $m_j$ since these terms are all also equal to 0 as $\ell_j=0$ when $m_j = 0$. Substituting this formula for $\Theta_1$ back into \eqref{eq:full_sum}  then rearranging gives	 
	 \begin{align*}
	 \Phi_j &= \sum_{\bsm' \leq \bsnu'}c^{|\bsm'|}{\bsnu'\choose \bsm'} \sum_{\bsl' \leq \bsm'}\bsb'^{\bsl'}\, \bigg(\prod_{\satop{i\geq1}{i\neq j}} S(m_i,\ell_i)\bigg)\notag\\
	 &\qquad\quad \times\sum_{m_j=0}^{\nu_j} c^{m_j}{\nu_j\choose m_j}\bbB_{\bsnu'-\bsm',\nu_j-m_j}
	 \sum_{\satop{\ell_j=0}{|\bsl'|+\ell_j\neq 0}}^{m_j}(|\bsl'|+\ell_j -1)!\,\ell_j\,b_j^{\ell_j}S(m_j,\ell_j)\\
	  &=\sum_{\bsm \leq \bsnu} c^{|\bsm|}{\bsnu\choose \bsm} \bbB_{\bsnu-\bsm}\sum_{\bszero\neq\bsl \leq \bsm}   \bsb^{\bsl}(|\bsl| -1)!\,\ell_j\,\prod_{i\geq1} S(m_i,\ell_i),
	\end{align*}
	where we obtain the last line by combining the sums over $\bsm'$ and $m_j$ into a single sum over the original index $\bsm$ and combine the sums over $\bsl'$ and $\ell_j$ into a single sum over the index $\bsl$.
	
Now substituting this back into \eqref{eq:rtp_induction}, we have
	\begin{align*}
	\bbA_{\bsnu} &=  \bbB_\bsnu +\sum_{j\geq1}\sum_{\bsm \leq \bsnu} c^{|\bsm|}{\bsnu\choose \bsm} \bbB_{\bsnu-\bsm}\sum_{\bszero\neq\bsl \leq \bsm}   \bsb^{\bsl}\,(|\bsl| -1)!\,\ell_j\,\prod_{i\geq1} S(m_i,\ell_i)\\
	&=\bbB_\bsnu +\sum_{\bsm \leq \bsnu}c^{|\bsm|}{\bsnu\choose \bsm} \bbB_{\bsnu-\bsm} \sum_{\bszero\neq\bsl \leq \bsm}    \bsb^{\bsl}\,|\bsl|!\,\prod_{i\geq1} S(m_i,\ell_i)\\
	&=\bbB_\bsnu +\sum_{ \bsm \leq \bsnu} c^{|\bsm|}{\bsnu\choose \bsm} \bbB_{\bsnu-\bsm}
	 \bigg(-\prod_{i\geq1}S(m_i,0)+\sum_{\bsl \leq \bsm}   \bsb^{\bsl}\,|\bsl|!\,\prod_{i\geq1} S(m_i,\ell_i)\bigg)\\
	&=\sum_{\bsm \leq \bsnu} c^{|\bsm|}{\bsnu\choose \bsm}  \sum_{\bsl \leq \bsm}   \bsb^{\bsl}\,|\bsl|!\,\bbB_{\bsnu-\bsm}\prod_{i\geq1} S(m_i,\ell_i),
	\end{align*}
	which is our desired result. We move to the second line interchanging the order of the summations and then to the third line by including the case when $\bsl= \bszero$ and subtracting the corresponding term $\prod_{i\geq 1}S(m_i,0)$ to maintain equality. Since $S(m_i,0)=0$ when $m_i\neq 0$, the term $\prod_{i\geq 1}S(m_i,0)$ only contributes when $\bsm=\bszero$, and the resulting sum cancels out with the term $\bbB_\bsnu$ giving the required result. 
	
	Since this induction proof holds for equality, in the case when \eqref{eq:recurs} and \eqref{eq:closed-form} have their equalities replaced by inequalities $\leq$, the statement will continue to hold.
\end{proof}

From \cite[Lemma A.3]{HHKKS23}, we also have that for some $\bsnu\in \calF$,
	\begin{align}\label{eq:stirling_identity}
	&\sum_{\bsm\leq\bsnu}\sum_{\bsl\leq\bsm}\sum_{\bsk\leq\bsnu-\bsm}{\bsnu\choose\bsm} 
	\bbA_{\bsl}\,\bbB_{\bsk} \prod_{i\geq1} \bigg(S(m_i,\ell_i)\,S(\nu_i - m_i,k_i)\bigg)\notag\\
	=&\sum_{\bsm\leq\bsnu}	 \bigg(\sum_{\bsl\leq\bsm} {\bsm\choose\bsl} \bbA_{\bsl}\,\bbB_{\bsm-\bsl}  \bigg) 
	  \prod_{i\geq1} S(\nu_i,m_i),
	\end{align}
	where  $(\bbA_\bsnu)_{\bsnu\in \calF}$ and $(\bbB_\bsnu)_{\bsnu\in \calF}$ are arbitrary sequences of real numbers.

\section{Parametric regularity proofs}\label{app:stoch_reg_pf}
\begin{proofof}{Lemma \ref{lem:laplace_d_u}}
We follow a similar strategy to the proof of \cite[Lemma 6.2]{KN16}. 
For $\bsnu=\bszero$, we rewrite the strong formulation \eqref{eq:pde} using the product rule to obtain
	\begin{align*}
	-\Psi(\bsx,\bsy)\,\Delta u(\bsx,\bsy) &= \nabla \Psi(\bsx,\bsy) \cdot \nabla u(\bsx,\bsy)+f(\bsx) ,
	\end{align*}
	from which it follows that
	\begin{align*}
	\Psi_{\min} \|\Delta u(\cdot,\bsy)\|_{L^2(D)} 
	\leq \|\nabla \Psi(\cdot,\bsy)\|_{L^\infty(D)} \| u(\cdot,\bsy)\|_{V}  + \|f\|_{L^2(D)},
	\end{align*} 
	where we have used Assumption \eqref{asm:a_bounds} and the definition $\| u(\cdot,\bsy)\|_{V} = \|\nabla u(\cdot,\bsy)\|_{L^2(D)}$.  
Combining this with \eqref{eq:lax-milgram}, we have
	\begin{align*}
	\|\Delta u(\cdot,\bsy)\|_{L^2(D)} 
	&\leq \frac{1}{\Psi_{\min}}\Bigg(\sup_{\bsy\in \Omega}\|\nabla \Psi(\cdot,\bsy)\|_{L^\infty(D)} 
	\frac{\|f\|_{V'}}{\Psi_{\min}} + \|f\|_{L^2(D)} \Bigg) \\
	&\leq \frac{1}{\Psi_{\min}}\Bigg(\sup_{\bsy\in \Omega}\|\nabla\Psi(\cdot,\bsy)\|_{L^\infty(D)} 
	\frac{C_{\rm{Poi}}\|f\|_{L^2(D)}}{\Psi_{\min}} + \|f\|_{L^2(D)} \Bigg) \\
	&\leq C_1\|f\|_{L^2(D)},
	\end{align*}			
where 
\begin{align}\label{eq:A_0bnd}
	C_1 \coloneqq 
	\frac{\max\{1,C_{\rm{Poi}}\}}{\Psi_{\min}}\bigg(\frac{ \|\nabla \Psi\|_{L^\infty(D\times\Omega)}}{\Psi_{\min}}+1\bigg),
	\end{align}
	and $C_{\rm{Poi}}$ is the Poincar\'e constant of the embedding $V\hookrightarrow L^2(D)$.
It follows from Assumption \eqref{asm:a_W1inf} that  $\|\nabla \Psi\|_{L^\infty(D\times\Omega)}\coloneq \sup_{\bsy\in\Omega}\|\nabla \Psi(\cdot,\bsy)\|_{L^\infty(D)}$ is finite. Hence, $u(\cdot,\bsy)\in H^2(D)$ and \eqref{eq:laplace_derivative} holds for $\bsnu = \bszero$.

Recall that the gradient $\nabla$ and Laplacian $\Delta$ are taken with respect to $\bsx$ and that $\partial^{\bsnu}$ is taken with respect to $\bsy$. For $\bsnu \neq \bszero$, we take the $\bsnu$th derivative of \eqref{eq:pde} using the Leibniz product rule. 
Since $f$ is independent of $\bsy$, the right-hand side is 0 and we can rearrange the resulting expression to obtain, omitting the dependence of $\bsx$~and~$\bsy$,
	\begin{align}\label{eq:diff_pde_y}
\nabla \cdot(\Psi\nabla \partial^{\bsnu}u) 
&= -\nabla \cdot \bigg(\ \sum_{\bszero \neq \bsm\leq \bsnu} {\bsnu\choose \bsm} \big(\partial^{\bsm}\Psi\big)\big(\nabla\partial^{\bsnu-\bsm}u\big) \bigg) \notag \\ 
&=  - \nabla \cdot \bigg(\sum_{j\geq 1} \sum_{k=1}^{\nu_j} {\nu_j \choose k}
	(2\pi)^k \sin\bigg(2\pi y_j + \frac{k\pi}{2}\bigg)\psi_j	\big(\nabla\partial^{\bsnu-k\bse_j}u\big) \bigg), 
	\end{align}
where we use the fact that the mixed partial derivatives of $\Psi$ with respect to $\bsy$ are
	\begin{align} \label{eq:d_a(x,y)}
	\partial^\bsm \Psi(\bsx,\bsy) = 
	\begin{cases}
	\Psi(\bsx,\bsy)  &\text{if } \bsm=\bszero,\\
	(2\pi)^k\sin\bigg(2\pi y_j +\dfrac{k\pi}{2}\bigg)\psi_j(\bsx) &\text{if } \bsm=k\bse_j,\, k\geq 1,\\
	0   &\text{otherwise.}
	\end{cases}
	\end{align}	
	
Expanding \eqref{eq:diff_pde_y} using the product rule for $\nabla$ and rearranging gives 
	\begin{align*}
	&\Psi \Delta \partial^\bsnu u = -\nabla \Psi \cdot \nabla \partial^\bsnu u \\ 
	&\qquad\quad -\sum_{j\geq 1} \sum_{k=1}^{\nu_j} {\nu_j \choose k}
	(2\pi)^k\sin\bigg(2\pi y_j + \frac{k\pi}{2}\bigg)
	\Big(\big(\nabla \psi_j\big)\big(\nabla\partial^{\bsnu-k\bse_j}u\big) + \psi_j\big(\Delta \partial^{\bsnu-k\bse_j}u\big)\Big).
	\end{align*}
Now taking the $L^2(D)$ norm and applying the triangle inequality gives
	\begin{align*}
&\Psi_{\min} \|\Delta\partial^\bsnu u\|_{L^2(D)} \leq \|\nabla \Psi\|_{L^\infty(D)}\|\nabla\partial^\bsnu u\|_{L^2(D)} \\
	& + \sum_{j\geq 1} \sum_{k=1}^{\nu_j} {\nu_j \choose k}
	(2\pi)^k\big( \|\psi_j\|_{L^\infty(D)} \|\Delta \partial^{\bsnu-k\bse_j}u\|_{L^2(D)}+\|\nabla \psi_j\|_{L^\infty(D)} \|\nabla\partial^{\bsnu-k\bse_j}u\|_{L^2(D)} \big),
	\end{align*}		
where we used $|\sin(x)| \leq 1$ for all real $x$. Formulating the above into a recursion gives
	\begin{align}\label{eq:delta_u_recurs}
	& \|\Delta\partial^\bsnu u\|_{L^2(D)} \leq \sum_{j\geq 1} \sum_{k=1}^{\nu_j} {\nu_j \choose k}
	(2\pi)^k\, b_j \, \|\Delta \partial^{\bsnu-k\bse_j}u\|_{L^2(D)}  + B_\bsnu,
	\end{align}
where $b_j$ and $\overline{b}_j$ are defined in \eqref{eq:bb_bar} and we define 
 	\begin{align*}
B_\bsnu \coloneq	\frac{\|\nabla \Psi\|_{L^\infty(D)}}{\Psi_{\min}}\|\partial^\bsnu u\|_{V}+
 	 \sum_{j\geq 1} \sum_{k=1}^{\nu_j} {\nu_j \choose k}
(2\pi)^k\,\overline{b}_j\,\|\partial^{\bsnu-k\bse_j}u\|_{V}.
 	\end{align*}
 	Substituting \eqref{eq:mixed_stoch_der} into $B_\bsnu$, we can bound $B_\bsnu$ by
 	\begin{align}\label{eq:Bnu_bnd}
 		B_\bsnu  &\leq  \frac{\|\nabla \Psi\|_{L^\infty(D)}}{\Psi_{\min}}
\frac{\|f\|_{V'}}{\Psi_{\min}}(2\pi)^{|\bsnu|} \sum_{\bsm\leq\bsnu} |\bsm|!\,\bsb^\bsm \prod_{i\geq1}S(\nu_i,m_i)\notag\\
&\qquad+\sum_{j\geq 1} \sum_{k=1}^{\nu_j} {\nu_j \choose k}
(2\pi)^k\,\overline{b}_j\frac{\|f\|_{V'}}{\Psi_{\min}}(2\pi)^{|\bsnu|-k} 
\!\!\!\!\!
\sum_{\bsm\leq\bsnu-k\bse_j} \!\!\!\!\! |\bsm|!\,\bsb^\bsm S(\nu_j-k,m_j)\prod_{\satop{i\geq1}{i\neq j}}S(\nu_i,m_i)\notag\\
	&=\frac{\|f\|_{V'}}{\Psi_{\min}}(2\pi)^{|\bsnu|}
	\bigg[\frac{\|\nabla a\|_{L^\infty(D)}}{\Psi_{\min}}  \sum_{\bsm\leq\bsnu} |\bsm|!\,\bsb^\bsm \prod_{i\geq1}S(\nu_i,m_i) \notag  \\
	&\qquad+  \underbrace{\sum_{j\geq 1} \sum_{k=1}^{\nu_j} {\nu_j \choose k}
\,\overline{b}_j \sum_{\bsm\leq\bsnu-k\bse_j} |\bsm|!\,\bsb^\bsm S(\nu_j-k,m_j)\prod_{\satop{i\geq1}{i\neq j}}S(\nu_i,m_i)}_{\eqcolon\,\Theta} 
	\bigg].
 	\end{align} 

We simplify $\Theta$ using the same strategy in the proof of Lemma \ref{lem:d_recursive_id} by separating out the $j$th component of the sum over $\bsm$ and interchanging the order of the sums to give
	\begin{align*}
	\Theta
	&=\sum_{j\geq 1} \sum_{k=1}^{\nu_j} {\nu_j \choose k} \overline{b}_j \, \sum_{\bsm'\leq\bsnu'}
	\sum_{m_j=0}^{\nu_j-k} (|\bsm'|+m_j)! \, \bsb'^{\bsm'}b_j^{m_j} S(\nu_j-k,m_j) \prod_{\satop{i\geq1}{i\neq j}}S(\nu_i,m_i)\\
	&=\sum_{j\geq 1} 
	 \sum_{\bsm'\leq\bsnu'}\bsb'^{\bsm'} \bigg(\prod_{\satop{i\geq1}{i\neq j}}S(\nu_i,m_i)\bigg)
	\sum_{m_j=0}^{\nu_j-k} 
\sum_{k=1}^{\nu_j} {\nu_j \choose k} \,\overline{b}_j 	\,
	(|\bsm'|+m_j)! \, b_j^{m_j} S(\nu_j-k,m_j) \\
	&\leq\sum_{j\geq 1} 
	 \sum_{\bsm'\leq\bsnu'} \bsb'^{\bsm'}\bigg(\prod_{\satop{i\geq1}{i\neq j}}S(\nu_i,m_i)\bigg)
	\sum_{\satop{m_j=0}{|\bsm'|+m_j\neq 0}}^{\nu_j}
	(|\bsm'|+m_j-1)!\,m_j\,\overline{b}_j^{m_j}\,S(\nu_j,m_j)\\
	&\leq  \sum_{\bsm\leq\bsnu} |\bsm|!\, \overline{\bsb}^{\bsm} \prod_{i\geq 1} S(\nu_i,m_i),
	\end{align*}		 	
 	where we drop the condition $|\bsm'|+m_j\neq 0$ since if $\bsm=\bszero$ and $\bsnu\neq \bszero $, there exists some index $i$ such that $\nu_i\neq 0$ and  $S(\nu_i,m_i) =0$. We have also replaced $\bsb$ with $\overline{\bsb}$ since $b_i\leq \overline{b}_i$ for all $i\geq1$.
 	
Substituting the bound on $\Theta$  into \eqref{eq:Bnu_bnd} and using $\bsb\leq \overline \bsb$, we can bound $B_\bsnu$ by 
	\begin{align*}
	B_\bsnu &\leq \|f\|_{L^2(D)} \frac{C_{\rm{Poi}}}{\Psi_{\min}}\bigg(\frac{\|\nabla \Psi\|_{L^\infty(D\times \Omega)}}{\Psi_{\min}} + 1\bigg) 
	(2\pi)^{|\bsnu|}\sum_{\bsm\leq\bsnu} |\bsm|!\, \overline{\bsb}^{\bsm} \prod_{i\geq 1} S(\nu_i,m_i)\\
	&\leq  C_1 \|f\|_{L^2(D)} 
	(2\pi)^{|\bsnu|}\sum_{\bsm\leq\bsnu} |\bsm|!\, \overline{\bsb}^{\bsm} \prod_{i\geq 1} S(\nu_i,m_i)\eqcolon \bbB_{\bsnu},
	\end{align*}	 	
 where $C_1$ is defined in \eqref{eq:A_0bnd}.
 	Thus, defining $\bbA_\bsnu \coloneq \|\Delta \partial^\bsnu u\|_{L^2(D)}$, we can write \eqref{eq:delta_u_recurs} as 
 	\begin{align*}
 	\bbA_\bsnu \leq \sum_{j\geq 1} \sum_{k=1}^{\nu_j} {\nu_j \choose k}
	(2\pi)^k\, b_j\,\bbA_{\bsnu-k\bse_j} + \bbB_\bsnu.
 	\end{align*}
 	Noting that $\bbA_0\leq \bbB_0$, we can then apply Lemma \ref{lem:d_recursive_id} (we cannot apply Lemma~\ref{lem:d_recursive_id} to \eqref{eq:delta_u_recurs} with $B_{\bsnu}$ since it is not true that $\bbA_\bszero \leq B_\bszero$) to give
 	\begin{align*}
 	\|\Delta \partial^\bsnu u\|_{L^2(D)} 
 	&\leq 
 	\sum_{\bsm\leq \bsnu} (2\pi)^{|\bsm|} {\bsnu \choose \bsm} \bigg(\sum_{\bsl\leq\bsm} |\bsl|!\,\bsb^\bsl \prod_{i\geq 1} S(m_i,\ell_i)\bigg)\\
	&\times
	 { C_1 \|f\|_{L^2(D)} }(2\pi)^{|\bsnu-\bsm|}\sum_{\bsk\leq\bsnu-\bsm} |\bsk|!\, \overline{\bsb}^{\bsk} \prod_{i\geq 1} S(\nu_i-m_i,k_i)\\
	&\leq  { C_1 \|f\|_{L^2(D)} }(2\pi)^{|\bsnu|} \\
 	&
 	\times \!\! \sum_{\bsm\leq \bsnu}\!\! {\bsnu \choose \bsm} \bigg(\sum_{\bsl\leq\bsm}\! |\bsl|!\,\overline{\bsb}^\bsl \prod_{i\geq 1} S(m_i,\ell_i)\!\bigg)
	\bigg(\sum_{\bsk\leq\bsnu-\bsm}\!\!\! |\bsk|!\, \overline{\bsb}^{\bsk} \prod_{i\geq 1}\! S(\nu_i-m_i,k_i)\!\bigg),
 	\end{align*}
Then, using \eqref{eq:stirling_identity} and the identity
$ 	\sum_{\bsl\leq \bsm} {\bsm\choose \bsl}|\bsl|!\,|\bsm-\bsl|! = (|\bsm|+1)!$
	from  \cite{KN16}, we obtain,
\begin{align*}
\|\Delta(\partial^\bsnu u)\|_{L^2(D)} 
&\leq { C_1 \|f\|_{L^2(D)}} (2\pi)^{|\bsnu|} \sum_{\bsm\leq\bsnu} \overline\bsb^\bsm \bigg[\sum_{\bsl\leq \bsm} {\bsm\choose \bsl}|\bsl|!\,|\bsm-\bsl|!\bigg] \prod_{i\geq 1} S(\nu_i,m_i)\\
	&= C_1 \|f\|_{L^2(D)}(2\pi)^{|\bsnu|}
	\sum_{\bsm\leq\bsnu}(|\bsm|+1)!\,\overline{\bsb}^\bsm \prod_{i\geq 1} S(\nu_i,m_i),
\end{align*}
as required.
\end{proofof}

\begin{proofof}{Lemma \ref{lem:d_u-uh}}
We follow the proof strategy presented in \cite[Lemma 6.3]{KN16}.
Let $f\in L^2(D)$, $\bsy\in \Omega$ and $\bsnu\in \calF$. 
Since $u_h$ is analytic in $V$, we have that $\partial^{\bsnu}u_h \in V_h$ for every $\bsnu\in\calF$ and hence
	\begin{align*}
	(\sfI - \sfP^h_\bsy)(\partial^\bsnu u_h(\cdot,\bsy)) \equiv 0,
	\end{align*}
where $\sfP^h_\bsy$ is the orthogonal projection defined by \eqref{eq:orth_proj}.
It then follows that
	\begin{align}\label{eq:norm_d_err}
	\|\partial^{\bsnu}(u-u_h)\|_{V} 
	&= \|\sfP_\bsy^h\partial^{\bsnu}(u-u_h) + (\sfI-\sfP_\bsy^h)\partial^{\bsnu}(u-u_h)\|_{V} \notag\\
	&\leq \|\sfP_\bsy^h\partial^{\bsnu}(u-u_h)\|_{V} 
	+\|(\sfI-\sfP_\bsy^h)\partial^{\bsnu}u \|_{V},
	\end{align}
where we have omitted the dependence of $u$ on $\bsx$ and $\bsy$ for brevity.
	
Starting with the Galerkin orthogonality property given in \eqref{eq:gal_orth}, we take the $\partial^{\bsnu}$ derivative with respect to $\bsy$ using the Leibniz product rule to give
	\begin{align}\label{eq:d_gal_orth}
	\int_D \sum_{\bsm\le\bsnu} {\bsnu\choose\bsm}(\partial^\bsm \Psi)
	\nabla\partial^{\bsnu-\bsm}(u-u_h)\cdot \nabla v_h\,\rd\bsx = 0 \quad \text{for all } v_h\in V_h.
	\end{align}		
Next, separating out the term when $\bsm=\bszero$ and then substituting the mixed derivatives of the coefficient \eqref{eq:d_a(x,y)} into \eqref{eq:d_gal_orth}, we obtain
	\begin{align*}
	&\int_D \Psi\,\nabla\partial^\bsnu(u-u_h)\cdot\nabla v_h \,\rd\bsx \notag\\
	&\qquad  = - \sum_{j\geq 1}\sum_{k=1}^{\nu_j} {\nu_j \choose k} 
	(2\pi)^k\sin\bigg(2\pi y_j+\frac{k\pi}{2}\bigg)\int_D \psi_j \nabla\partial^{\bsnu-k\bse_j}(u-u_h) \cdot \nabla v_h\, \rd\bsx
	\end{align*}	
	for all $v_h\in V_h$.
Now, letting $v_h = \sfP_\bsy^h\partial^\bsnu(u-u_h)$, we have
	\begin{align}\label{eq:sub}
	&\int_D \Psi\,\nabla\partial^\bsnu(u-u_h)\cdot\nabla \sfP_\bsy^h\partial^\bsnu(u-u_h) \,\rd\bsx \notag\\
	&= - \sum_{j\geq 1}\sum_{k=1}^{\nu_j} {\nu_j \choose k} 
	(2\pi)^k\sin\!\bigg(2\pi y_j+\frac{k\pi}{2}\bigg)\!\int_D\! \psi_j \nabla\partial^{\bsnu-k\bse_j}(u-u_h) \cdot \nabla \sfP^h_\bsy\partial^\bsnu(u-u_h) \, \rd\bsx.
	\end{align}
Since $\partial^{\bsnu}(u-u_h)\in V$, using \eqref{eq:orth_proj} with $w =\partial^{\bsnu}(u-u_h)$ and $v_h = \sfP_\bsy^h\partial^\bsnu(u-u_h)$ yields
	\begin{align*}
	\int_D \Psi\nabla((\sfI-\sfP^h_\bsy)\partial^\bsnu(u-u_h))\cdot\nabla \sfP_\bsy^h\partial^\bsnu(u-u_h)\,\rd\bsx=0,
	\end{align*}
	which can then be rearranged to give
	\begin{align}
	\int_D \Psi\nabla\partial^\bsnu(u-u_h)\cdot \nabla \sfP^h_\bsy\partial^\bsnu(u-u_h)\,\rd\bsx
	&=\int_D \Psi\,|\nabla \sfP_\bsy^h\partial^\bsnu(u-u_h)|^2\,\rd\bsx \notag\\
	 &\geq \Psi_{\min} \|\sfP^h_\bsy \partial^\bsnu(u-u_h)\|^2_{V}, \label{eq:LHS_identity}
\end{align}	 	
where we have used Assumption \eqref{asm:a_bounds}. Substituting in the lower bound \eqref{eq:LHS_identity} for the left-hand side of \eqref{eq:sub}, applying the Cauchy-Schwarz inequality to the right hand side and then dividing through by $ \Psi_{\min}\|\sfP^h_\bsy \partial^\bsnu(u-u_h)\|_{V}$ yields
	\begin{align*}
	\|\sfP^h_\bsy \partial^\bsnu(u-u_h)\|_{V}
	\leq \frac{1}{\Psi_{\min}}\sum_{j\geq 1}  \sum_{k=1}^{\nu_j} {\nu_j \choose k}(2\pi)^k\,\|\psi_j\|_{L^\infty(D)}\,
	\| \partial^{\bsnu-k\bse_j}(u-u_h)\|_V.
	\end{align*}
Then substituting this into \eqref{eq:norm_d_err} gives
	\begin{align*}
	\|\partial^{\bsnu}(u-u_h)\|_{V} 
	&\leq \sum_{j\geq 1}  \sum_{k=1}^{\nu_j} {\nu_j \choose k}(2\pi)^k b_j\,\,
	\| \partial^{\bsnu-k\bse_j}(u-u_h)\|_V
	+\|(\sfI-\sfP^h_\bsy)\partial^{\bsnu}u \|_{V},
	\end{align*}	 
	where $b_j$ is defined in \eqref{eq:bb_bar}.

	Applying Lemma \ref{lem:d_recursive_id} with $\bbA_\bsnu = \| \partial^{\bsnu}(u-u_h)\|_V$, $\bbB_\bsnu= \|(\sfI-\sfP^h_\bsy)\partial^{\bsnu}u \|_{V}$ and $c=2\pi$ to the above inequality gives
	\begin{align*}
	\|\partial^{\bsnu}(u-u_h)\|_{V}  \leq 
	\sum_{\bsm \leq \bsnu}(2\pi)^{|\bsm|}{\bsnu \choose \bsm} 
	 \bigg(\sum_{\bsl \leq \bsm}  |\bsl|! \,\bsb^{\bsl}  \prod_{i\geq1} S(m_i,\ell_i)\bigg) \|(\sfI-\sfP^h_\bsy)\partial^{\bsnu-\bsm}u \|_{V}.
	\end{align*}
	Since $\partial^{\bsnu-\bsm}u\in H^2(D)$,  we have $\|(\sfI-\sfP^h_\bsy)\partial^{\bsnu-\bsm}u \|_{V}\leq C\,h\,\|\Delta\partial^{\bsnu-\bsm}u\|_{L^2(D)}$ from \eqref{eq:d_fe_error} (with $C$ independent of $\bsy$), which in turn gives 
	\begin{align*}
	\|\partial^{\bsnu}(u-u_h)\|_{V}  &\leq C\,h\sum_{\bsm \leq \bsnu}(2\pi)^{|\bsm|}{\bsnu \choose \bsm} 
	 \bigg(\sum_{\bsl \leq \bsm} |\bsl|! \,\bsb^{\bsl}  \prod_{i\geq1} S(m_i,\ell_i)\bigg)\|\Delta \partial^{\bsnu-\bsm}u\|_{L^2(D)}.
\end{align*}		
	
Then using \eqref{eq:laplace_derivative} from Lemma \ref{lem:laplace_d_u} with constant $C_1$, $\bsb\leq \overline{\bsb}$ and defining $C_2~\coloneq~{C\,C_1}/{2}$, we obtain
	\begin{align*}
	&\|\partial^{\bsnu}(u-u_h)\|_{V} \\
	 &\leq 2C_2\,h\,(2\pi)^{|\bsnu|}\,\|f\|_{L^2(D)}\\
	 &\qquad\times\sum_{\bsm \leq \bsnu}{\bsnu \choose \bsm} 
	 \bigg(\sum_{\bsl \leq \bsm} |\bsl|!\, \overline{\bsb}^{\bsl}  \prod_{j\geq1} S(m_j,\ell_j)\bigg)
	 \bigg(\sum_{\bsk\leq \bsnu-\bsm} (|\bsk|+1)!\,\overline{\bsb}^{\bsk} \prod_{i\geq 1} S(\nu_i-m_i,k_i)\bigg)\\
	 &=  2C_2\,h\,(2\pi)^{|\bsnu|}\,\|f\|_{L^2(D)}
	 \sum_{\bsm \leq \bsnu} \overline{\bsb}^\bsm
	  \bigg(\sum_{\bsl \leq \bsm} {\bsm \choose \bsl} |\bsl|!\,(|\bsm-\bsl|!+1)!\bigg)
	   \prod_{i\geq 1}S(\nu_i,m_i) \\
	 &=  C_2\,h\, (2\pi)^{|\bsnu|}\,\|f\|_{L^2(D)}
	 \sum_{\bsm \leq \bsnu}(|\bsm|+2)!\,\overline{\bsb}^\bsm\prod_{i\geq 1}S(\nu_i,m_i).
	\end{align*}
	We arrive at the first equality using \eqref{eq:stirling_identity} with $\bbA_{\bsl} = |\bsl|!\,\overline{\bsb}^{\bsl}$ and $\bbB_{\bsk} = (|\bsk|+1)!\,\overline{\bsb}^\bsk$ and then move to the last line using the identity from \cite{KN16}
	\begin{align*}
	\sum_{\bsl\leq \bsm} {\bsm \choose \bsl} |\bsl|! (|\bsm-\bsl|+1)! = \frac{(|\bsm|+2)!}{2},
	\end{align*}
	which gives the required result. The constant $C_2$ is independent of $h$ and $\bsy$.
\end{proofof}

\begin{proofof}{Lemma \ref{lem:L2_FE_Err_der}}
We use an Aubin-Nitsche duality argument. For some linear functional $\calG~\in~L^2(D)$,  define $v^g\in V$ to be the solution to the dual problem,
	\begin{align*}
	\calA(\bsy;w,v^g(\cdot,\bsy)) = \calG(w)  \quad \text{for all } w\in V,
	\end{align*}
which, since $\calA$ is symmetric, is equivalent to the parametric variational problem \eqref{eq:variational_pde} with $f$ replaced by $g$, the representer of $\calG$. Thus, $v^g(\cdot,\bsy)\in V$ inherits the regularity of the solution to \eqref{eq:variational_pde} and the FE approximation $v_g^h(\cdot,\bsy)\in V_h$ also satisfies \eqref{eq:FE_error_d_bnd}.

Letting $w=u-u_h$ (and suppressing the dependence on $\bsy$), it follows from Galerkin orthogonality\eqref{eq:gal_orth} that $\calA(\bsy;u-u_h,v^g_h)=0$, which leads to
	\begin{align*}
	\calG(u-u_h) = \calA(\bsy;u-u_h,v_g) = \calA(\bsy;u-u_h,v^g-v^g_h).
	\end{align*}
Differentiating this with respect to $\bsy$ gives
	\begin{align*}%\label{eq:d_dual}
	\calG\big(\partial^{\bsnu}(u-u_h)\big) &= \int_D \partial^{\bsnu}\big(\Psi\,\nabla(u-u_h)\cdot \nabla(v^g-v^g_h)\big)\,\rd\bsx.
	\end{align*}
Applying the Leibniz product rule, the integrand on the right becomes
	\begin{align*}%\label{eq:d_dual_leib}
	&\sum_{\bsm\leq\bsnu} {\bsnu\choose \bsm}\big(\partial^{\bsm}\Psi\big) \,
	\partial^{\bsnu-\bsm}\big(\nabla(u-u_h)\cdot \nabla(v^g-v^g_h)\big)\notag\\
	&=\Psi\,\partial^{\bsnu}\big(\nabla(u-u_h)\cdot \nabla(v^g-v^g_h)\big) \notag \\
	&\hspace{1cm} +
	\sum_{\bszero \neq\bsm\leq\bsnu} {\bsnu\choose \bsm}\big(\partial^{\bsm}\Psi\big) \,
	\partial^{\bsnu-\bsm}\big(\nabla(u-u_h)\cdot \nabla(v^g-v^g_h)\big)\notag\\
	&= \Psi\,\partial^{\bsnu}\big(\nabla(u-u_h)\cdot \nabla(v^g-v^g_h)\big)\notag\\
	&\hspace{1cm} +\sum_{j\geq1}\sum_{k=1}^{\nu_j}{\nu_j \choose k}(2\pi)^k\sin\bigg(2\pi y_j +\frac{k\pi}{2}\bigg)\,\psi_j \,\partial^{\bsnu-k\bse_j}\big(\nabla(u-u_h)\cdot \nabla(v^g-v^g_h)\big)\notag\\
	&= \Psi\!\!\sum_{\bsm\leq\bsnu} \!\!{\bsnu\choose \bsm}\nabla\partial^{\bsm}(u-u_h)\cdot \nabla\partial^{\bsnu-\bsm}(v^g-v^g_h)+\sum_{j\geq1}\sum_{k=1}^{\nu_j}\!{\nu_j \choose k}(2\pi)^k\sin\!\bigg(\!2\pi y_j +\frac{k\pi}{2}\!\bigg)\psi_j \notag\\
	&\hspace{2cm}\times\bigg(\sum_{\bsl\leq\bsnu-k\bse_j} {\bsnu-k\bse_j\choose \bsl} \nabla \partial^{\bsl}(u-u_h)
	\cdot \nabla\partial^{\bsnu-k\bse_j-\bsl}(v^g-v^g_h)\bigg),
	\end{align*}
where we have substituted in the bound \eqref{eq:d_a(x,y)} and applied the Leibniz product rule  to $\partial^{\bsnu-k\bse_j}\big(\nabla(u-u_h)\cdot \nabla(v^g-v^g_h)\big)$.

Now, taking the absolute value and using the Cauchy-Schwarz inequality gives
	\begin{align}
	\label{eq:FE_d_err_sums}
	&\big|\calG\big(\partial^\bsnu( u-u_h)\big)\big| \leq \Psi_{\max} \,\sum_{\bsm\leq\bsnu} {\bsnu\choose \bsm} \| \partial^{\bsm}(u-u_h)\|_{V}\, \|\partial^{\bsnu-\bsm}(v^g-v^g_h)\|_{V}\\
	&+ \Psi_{\min}\sum_{j\geq1}\sum_{k=1}^{\nu_j} {\nu_j \choose k} (2\pi)^k b_j
	\!\!\!\sum_{\bsl\leq\bsnu-k\bse_j}\!\!\! {\bsnu-k\bse_j\choose \bsl}\| \partial^{\bsl}(u-u_h)\|_{V} \|\partial^{\bsnu-k\bse_j-\bsl}(v^g-v^g_h)\|_{V}, 
	\notag
	\end{align}
where we have also used the definition of $b_j$ in \eqref{eq:bb_bar}.  
	
The terms in the first sum in \eqref{eq:FE_d_err_sums} can be bounded by \eqref{eq:FE_error_d_bnd} from  Lemma \ref{lem:d_u-uh} to give
	\begin{align}
	\sum_{\bsm\leq\bsnu} & {\bsnu\choose \bsm} \| \partial^{\bsm}(u-u_h)\|_{V}\, \|\partial^{\bsnu-\bsm}(v^g-v^g_h)\|_{V}\notag\\
	\leq \,& C_2\,h^2\,\|f\|_{L^2(D)}\|g\|_{L^2(D)}(2\pi)^{|\bsnu|}\,\sum_{\bsm\leq\bsnu} {\bsnu\choose \bsm}\notag\\
	&\times \bigg(\sum_{\bsl \leq \bsm}(|\bsl|+2)!\,\overline{\bsb}^\bsl\prod_{i\geq 1}S(m_i,\ell_i)\bigg)\,  
	\bigg(\sum_{\bsk \leq \bsnu-\bsm}(|\bsk|+2)!\,\overline{\bsb}^\bsk\prod_{i\geq 1}S(\nu_i-m_i,k_i)\bigg)\notag\\
	=\, &\frac{C_2}{30}\,h^2\,\|f\|_{L^2(D)}\|g\|_{L^2(D)}(2\pi)^{|\bsnu|}\,\sum_{\bsm\leq\bsnu} \overline{\bsb}^\bsm (|\bsm|+5)!\prod_{i\geq 1}S(\nu_i,m_i),\label{eq:term1}
	\end{align}	
where $C_2$ denotes the constant factor from \eqref{eq:FE_error_d_bnd} and we obtain the last equality using \eqref{eq:stirling_identity} with $\bbA_{\bsl} = (|\bsl|+2)!\,\overline{\bsb}^\bsl$ and $\bbB_{\bsk}=(|\bsk|+2)!\,\overline{\bsb}^\bsk$,  along with the identity from \cite{KN16}
	\begin{align}\label{eq:combinatorial_id}
	\sum_{\bsl\leq \bsm} {\bsm \choose \bsl} (|\bsl|+2)!\, (|\bsm-\bsl|+2)!=\frac{(|\bsm|+5)!}{30}.
	\end{align}

	Similarly, for the summation over the index $\bsl$ in \eqref{eq:FE_d_err_sums}, we again use \eqref{eq:FE_error_d_bnd} from Lemma~\ref{lem:d_u-uh} along with \eqref{eq:stirling_identity} and  \eqref{eq:combinatorial_id} to obtain
	\begin{align*}
	&\sum_{\bsl\leq\bsnu-k\bse_j} {\bsnu-k\bse_j\choose \bsl}\| \partial^{\bsl}(u-u_h)\|_{V}\, \|\partial^{\bsnu-k\bse_j-\bsl}(v^g-v^g_h)\|_{V}\\
&	 \leq C_2\,h^2\,\|f\|_{L^2(D)}\|g\|_{L^2(D)}(2\pi)^{|\bsnu|-k}\sum_{\bsl\leq\bsnu-k\bse_j}\overline{\bsb}^\bsl\,(|\bsl|+5)!\,S(\nu_j-k,\ell_j)\prod_{\satop{i\geq 1}{i\neq j}}S(\nu_i,\ell_i).
	\end{align*}

Substituting this back into the sum indexed by $j$ in \eqref{eq:FE_d_err_sums}, we have 
	\begin{align}
	&\sum_{j\geq1}\sum_{k=1}^{\nu_j} {\nu_j \choose k} (2\pi)^k b_j
	 \sum_{\bsl\leq\bsnu-k\bse_j} {\bsnu-k\bse_j\choose \bsl}\| \partial^{\bsl}(u-u_h)\|_{V}\, \|\partial^{\bsnu-k\bse_j-\bsl}(v^g-v^g_h)\|_{V}\notag\\
&	 \leq C_2\,h^2\,\|f\|_{L^2(D)}\|g\|_{L^2(D)}(2\pi)^{|\bsnu|}\notag\\
&\qquad\qquad\qquad
	 \times\sum_{j\geq1}\underbrace{\sum_{k=1}^{\nu_j} {\nu_j \choose k} b_j\!\!\!\!
	 \sum_{\bsl\leq\bsnu-k\bse_j}\!\!\!\!\overline{\bsb}^\bsl\,(|\bsl|+5)!\,S(\nu_j-k,\ell_j)\prod_{\satop{i\geq 1}{i\neq j}}S(\nu_i,\ell_i)}_{\eqcolon\, \Theta_j}.\label{eq:term2}
	\end{align}

To bound $\Theta_j$, we use the same technique as in Lemma \ref{lem:d_recursive_id}.
We separate out component $\ell_j$ from the innermost sum over $\bsl$, bound $b_j$
by $\overline{b}_j$ then swap the order of the sums over $k$ and $\ell_j$
 so that \eqref{eq:stirling_id} can be used to evaluate the sum over $k$. 
This gives
	\begin{align*}
	\Theta_j &= \sum_{k=1}^{\nu_j} {\nu_j \choose k} b_j
	 \sum_{\bsl'\leq\bsnu'}\sum_{\ell_j=0}^{\nu_j-k}\overline{\bsb'}^{\bsl'}\,\bigg(\prod_{\satop{i\geq 1}{i\neq j}}S(\nu_i,\ell_i)\bigg)\,\overline b_j^{\ell_j}\,(|\bsl'|+\ell_j+5)!\,S(\nu_j-k,\ell_j)\\ 	
	 &\leq \sum_{\bsl'\leq\bsnu'}\overline{\bsb'}^{\bsl'}\,\bigg(\prod_{\satop{i\geq 1}{i\neq j}}S(\nu_i,\ell_i)\bigg)
	 \sum_{\ell_j=0}^{\nu_j-1} \sum_{k=1}^{\nu_j-\ell_j}{\nu_j \choose k}\,\overline{b}_j^{\ell_j+1}\,(|\bsl'|+\ell_j+5)!\,S(\nu_j-k,\ell_j) \\
	 &= \sum_{\bsl'\leq\bsnu'}\overline{\bsb'}^{\bsl'}\,\bigg(\prod_{\satop{i\geq 1}{i\neq j}}S(\nu_i,\ell_i)\bigg)
	 \sum_{\ell_j=0}^{\nu_j-1}\overline{b}_j^{\ell_j+1}\,(|\bsl'|+\ell_j+5)!\,(\ell_j+1)\,S(\nu_j,\ell_j+1) \\
	 &= \sum_{\bsl'\leq\bsnu'}\overline{\bsb'}^{\bsl'}\,\bigg(\prod_{\satop{i\geq 1}{i\neq j}}S(\nu_i,\ell_i)\bigg)
	 \sum_{\ell_j=1}^{\nu_j}\overline{b}_j^{\ell_j}\,(|\bsl'|+\ell_j+4)!\,\ell_j\,S(\nu_j,\ell_j),
	\end{align*}	 
We can add the terms $\ell_j=0$ to the sum due to the presence of the factor $\ell_j$ and thus
	\begin{align*}
	\Theta_j \leq 
	\sum_{\bsl\leq\bsnu}\overline{\bsb}^{\bsl}(|\bsl|+4)!\,\ell_j\,\prod_{i\geq 1}S(\nu_i,\ell_i),
	\end{align*}
and using $\sum_{j\geq 1}\ell_j = |\bsl| \leq |\bsl|+5$ we have,
	\begin{align}
	&\sum_{j\geq1}\Theta_j 
	\leq 
	\sum_{\bsl\leq\bsnu}\overline{\bsb}^{\bsl}(|\bsl|+5)!\,\prod_{i\geq 1}S(\nu_i,\ell_i).\label{eq:term2_simplified}
	\end{align}

Combining  \eqref{eq:term2_simplified}, \eqref{eq:term2}, \eqref{eq:term1} and \eqref{eq:FE_d_err_sums}, we have
	\begin{align*}
	&|\calG(\partial^{\bsnu}(u-u_h))| \notag \\
	&\leq \Psi_{\max} \frac{C_2}{30}\,h^2\,\|f\|_{L^2(D)}\|g\|_{L^2(D)}(2\pi)^{|\bsnu|}\,\sum_{\bsm\leq\bsnu} \overline{\bsb}^\bsm (|\bsm|+5)!\prod_{i\geq 1}S(\nu_i,m_i) \notag\\
	&\hspace{1 cm}+\Psi_{\min}\,C_2\,h^2\,\|f\|_{L^2(D)}\|g\|_{L^2(D)}(2\pi)^{|\bsnu|}
	 \sum_{\bsl\leq\bsnu}\overline{\bsb}^{\bsl}(|\bsl|+5)!\,\prod_{i\geq 1}S(\nu_i,\ell_i) \notag\\
	&=C_2\Big(\frac{\Psi_{\max}}{30} + \Psi_{\min}\Big)\,h^2\,\|f\|_{L^2(D)}\|g\|_{L^2(D)}(2\pi)^{|\bsnu|} 
	\sum_{\bsm\leq\bsnu} \overline{\bsb}^\bsm (|\bsm|+5)!\prod_{i\geq 1}S(\nu_i,m_i).
	\end{align*}

Finally,  we let $\calG$ be the functional with representer $g =\partial^{\bsnu}(u-u_h)(\cdot,\bsy)$, i.e., $\calG(v)= |\langle \partial^{\bsnu}(u-u_h),v\rangle|$ for  $v\in V$, which gives
	\begin{align*}
	 &\|\partial^{\bsnu}(u-u_h)\|^2_{L^2(D)}\\
	 &\qquad
	  \lesssim\,h^2\,\|f\|_{L^2(D)}\|\partial^{\bsnu}(u-u_h)\|_{L^2(D)} (2\pi)^{|\bsnu|} \sum_{\bsm\leq\bsnu} \overline{\bsb}^\bsm (|\bsm|+5)!\prod_{i\geq 1}S(\nu_i,m_i),
	 \end{align*}
where the implied constant is independent of $h$ and $\bsy$.
Finally, dividing through by $\|\partial^{\bsnu}(u-u_h)\|_{L^2(D)}$
yields the required result \eqref{eq:FE_error_d_bnd}.
\end{proofof}

\end{appendices}

\end{document}